\documentclass{amsart}
\calclayout
 \usepackage{amssymb,amsmath,amsfonts,epsfig,latexsym,tikz}
   \usepackage[alphabetic]{amsrefs}
 \usepackage{tikz-cd}
\usepackage{hyperref}
\usepackage{enumerate}
\usepackage{mathtools}
\usepackage{verbatim}
\usepackage{cleveref}
\usepackage[shortlabels]{enumitem}
\usepackage{subcaption}

\usetikzlibrary{positioning}
\usetikzlibrary{matrix}
\usetikzlibrary{decorations}
\usetikzlibrary{decorations.pathreplacing, decorations.pathmorphing, angles,quotes}
 
 \newtheorem{theorem}{Theorem}[section]
\newtheorem{definition}[theorem]{Definition}
\newtheorem{proposition}[theorem]{Proposition}
\newtheorem{lemma}[theorem]{Lemma}
\newtheorem{corollary}[theorem]{Corollary}
\newtheorem{conjecture}[theorem]{Conjecture}

\theoremstyle{definition}
\newtheorem{remark}[theorem]{Remark}
\newtheorem{example}[theorem]{Example}

\begin{document}

\title{Hodge theory for combinatorial projective bundles}          
    
\author{Matt Larson and Ethan Partida}
\date{\today}
\begin{abstract}
We prove the Hard Lefschetz theorem and Hodge--Riemann relations for certain rings which resemble the cohomology rings of projectivizations of globally generated vector bundles over toric varieties. This proves new cases of the standard conjectures of Hodge type and gives Bloch--Gieseker-type results for tautological classes of matroids. 
\end{abstract}
 
\maketitle

\vspace{-20 pt}

\section{Introduction}

Let $X$ be a proper toric variety over a field $k$ associated to a fan $\Sigma$, and let $\mathcal{E}$ be a globally generated toric vector bundle of rank $r$ on $X$. Then, as we will explain, the Chern classes of $\mathcal{E}$ are controlled by some combinatorial data associated to $\mathcal{E}$. Let $T$ be the torus embedded in $X$, and choose a decomposition of $H^0(X, \mathcal{E})$ into a direct sum of $N$ $1$-dimensional representations. This induces a surjection $\mathcal{O}^{\oplus N} \to \mathcal{E}$, where $\mathcal{O}^{\oplus N}$ is given the linearization so that this is a map of toric vector bundles. The dual of this map realizes $\mathcal{E}^{\vee}$, the dual of $\mathcal{E}$, as a subbundle of a trivial bundle. We have an induced map from $X$ to $\operatorname{Gr}(r, N)$, the Grassmannian of $r$-dimensional subspaces of $k^N$. The action of $T$ on $k^N$ induces an action of $T$ on $\operatorname{Gr}(r,N)$ so that this map is $T$-equivariant. The vector bundle $\mathcal{E}$ is the pullback of the dual of the universal subbundle $\mathcal{S}$ from $\operatorname{Gr}(r,N)$. 

To give a combinatorial description of the Chern classes of $\mathcal{E}$, we first consider the case when $T = \mathbb{G}_m^N$ and the action on $k^N$ is given by scaling the coordinates. It is convenient to generalize to the case when $T$ is a torus acting on $X$ with a dense orbit; this amounts to allowing $\Sigma$ to have a nontrivial lineality space. 
Let $L$ be the fiber over the identity of $\mathcal{E}^{\vee}$, viewed as a subspace of $k^N$. Then $L$ is a realization of a \emph{matroid} $\mathrm{M}$, which encodes the vanishing and non-vanishing of the Pl\"{u}cker coordinates of $L$. The matroid $\mathrm{M}$ of $L$ is a collection of subsets of $[N] = \{1, \dotsc, N\}$ of size $r$ called bases: a set $B$ is a basis if the composition $L \hookrightarrow k^N \twoheadrightarrow k^B$ is an isomorphism. The matroid polytope $P(\mathrm{M})$ is the polytope in $\mathbb{R}^N$ given by 
$$\text{Convex Hull}\left(\sum_{i \in B} \mathbf{e}_i : B \text{ basis of }\mathrm{M}\right).$$
The key property that $P(\mathrm{M})$ satisfies is that each edge is parallel to a vector of the form $\mathbf{e}_i - \mathbf{e}_j$. In general, a matroid on $[N]$ is a nonempty collection of subsets of $[N]$ such that the matroid polytope, as defined above, has all edges parallel to a vector of the form $\mathbf{e}_i - \mathbf{e}_j$. We say that a matroid $\mathrm{M}$ is \emph{realizable} over a field $k$ if it is given by the set of non-vanishing Pl\"{u}cker coordinates of some subspace of $k^N$. 

The image of $X$ in $\operatorname{Gr}(r,N)$ is the $T$-orbit closure of the point $[L]$ of $\operatorname{Gr}(r,N)$. This $T$-orbit closure is the toric variety of the normal fan of $P(\mathrm{M})$ \cite{Gelfand1987a}, see also \cite[Appendix A]{Speyerg}. The vector bundle $\mathcal{E}$ is pulled back from this $T$-orbit closure, so it suffices to compute its Chern classes there. 

Let $\Sigma_{P(\mathrm{M})}$ be the normal fan of $P(\mathrm{M})$, which we view as a subfan of $\mathbb{R}^N$ with a nontrivial lineality space. Each piecewise polynomial function on $\mathbb{R}^N$, i.e., each continuous function whose restriction to each cone of $\Sigma_{P(\mathrm{M})}$ is equal to a polynomial with integer coefficients, defines an element of the Chow cohomology ring of the $T$-orbit closure of $[L]$, see \cite{PayneCohomology}. On $\Sigma_{P(\mathrm{M})}$, there are some distinguished piecewise polynomial functions $\mathfrak{c}_0 = 1$, $\mathfrak{c}_1, \dotsc, \mathfrak{c}_r$. On the cone of $\Sigma_{P(\mathrm{M})}$ corresponding to a basis $B = \{i_1, \dotsc, i_r\}$ of $\mathrm{M}$, $\mathfrak{c}_i$ is the $i$th elementary symmetric function of $t_{i_1}, \dotsc, t_{i_r}$; that this is a continuous function follows from the fact that all edges of $P(\mathrm{M})$ are parallel to vectors of the form $\mathbf{e}_i - \mathbf{e}_j$. These piecewise polynomial functions correspond to the Chern classes of the restriction of $\mathcal{S}$ to the $T$-orbit closure of $[L]$, see \cite[Section 3]{BEST}. 
In particular, the Chern classes of the restriction of $\mathcal{S}$ only depend on the matroid of $L$. 

In the general case, the action of $T \cong \mathbb{G}_m^m$ on $k^N$ induces a map $\varphi \colon T \to \mathbb{G}_m^N$. Let $\varphi^* \colon \mathbb{Z}^N \to \mathbb{Z}^m$ be the induced map on character lattices. That there is a $T$-equivariant map $X \to \operatorname{Gr}(r, N)$ which factors through the toric variety of $P(\mathrm{M})$ means that the normal fan of $\varphi^*(P(\mathrm{M}))$ is a coarsening of the fan of $X$. Let $N$ denote the cocharacter lattice of $T$, and set $N_{\mathbb{R}} \coloneqq N \otimes_{\mathbb{Z}} \mathbb{R}$. The induced map $\varphi_* \colon N_{\mathbb{R}} \to \mathbb{R}^N$ gives a map of fans from the fan of $X$ to $\Sigma_{P(\mathrm{M})}$, and so the pullbacks of $\mathfrak{c}_1, \dotsc, \mathfrak{c}_r$ are piecewise polynomials on the fan of $X$. The Chow cohomology classes associated to these piecewise polynomials are the Chern classes of $\mathcal{E}^{\vee}$. 

In summary, any globally generated $T$-equivariant vector bundle on $X$ is pulled back from the toric variety associated to the matroid polytope of a matroid which is realizable over $k$. 
In particular, classifying the possible Chern classes of globally generated $T$-equivariant vector bundles on a proper toric variety over $k$ reduces to understanding the combinatorics of matroids which are realizable over $k$. 

\medskip

In this paper, we will prove some positivity properties for the classes discussed above. We will show that certain rings constructed using them have a collection of properties known as the \emph{K\"{a}hler package}. 
Let $A = A^0 \oplus \dotsb \oplus A^n$ be a finite-dimensional commutative graded $\mathbb{R}$-algebra which is equipped with a degree map $\deg_A \colon A^n \to \mathbb{R}$ and a nonempty open convex cone $\mathcal{K} \subset A^1$. We say that $A$ has the K\"{a}hler package if the following properties hold. 
\begin{enumerate}
   \item (Poincar\'{e} duality) For all $i$, the pairing 
   $$A^i \times A^{n - i} \to \mathbb{R}, \quad (x, y) \mapsto \deg_A(xy)$$
   is nondegenerate. 
   \item\label{item:HL} (Hard Lefschetz) For every $0 \le i \le n/2$ and $\ell \in \mathcal{K}$, the map
   $$A^i \to A^{n - i}, \quad x \mapsto \ell^{n - 2i} x$$
   is an isomorphism.
   \item\label{item:HR} (Hodge--Riemann) For every $0 \le i \le n/2$ and $\ell \in \mathcal{K}$, the bilinear form
   $$A^i\times A^i \to \mathbb{R}, \quad (x, y) \mapsto (-1)^i \deg_A(\ell^{n - 2i} xy)$$
   is positive definite on the kernel of multiplication by $\ell^{n - 2i + 1}$. 
\end{enumerate}

For a projective simplicial toric variety $X$ over some field $k$, the Chow cohomology ring $A(X)$ with real coefficients is equipped with a degree map corresponding to the pushforward to a point and an open convex cone $\mathcal{K}_X$ in $A^1$ given by positive linear combinations of classes of ample divisors. With respect to this data, the ring $A(X)$ satisfies the K\"{a}hler package. This is a consequence of general results on the cohomology of complex projective manifolds \cite[Chapter 3]{Huybrechts}, as the Chow cohomology ring of $X$ coincides with the singular cohomology ring of the complex toric variety with the same normal fan as $X$. This fact also has a direct combinatorial proof \cite{McMullen93}. 

Let $\mathrm{M}$ be a matroid of rank $r$ on $[N] = \{1, \dotsc, N\}$. Let $\Sigma$ be a projective simplicial fan in $\mathbb{R}^m$. In order to include a prominent example (Example~\ref{ex:taut}), we allow $\Sigma$ to have a nontrivial lineality space, i.e., we consider toric varieties which are equipped with an action of a torus with a nontrivial stabilizer. 

Let $\varphi^* \colon \mathbb{Z}^N \to \mathbb{Z}^m$ be a linear map, and assume that $\Sigma$ refines the normal fan of $\varphi^*(P(\mathrm{M}))$. Equivalently, the dual map $\varphi_* \colon \mathbb{Z}^m \to \mathbb{Z}^N$ induces a map of fans from $\Sigma$ to $\Sigma_{P(\mathrm{M})}$. As described previously, for any matroid $\mathrm{M}$ of rank $r$, there are piecewise polynomial functions $\mathfrak{c}_0 = 1$, $\mathfrak{c}_1, \dotsc, \mathfrak{c}_r$ on $\Sigma_{P(\mathrm{M})}$. Pulling these back to $\Sigma$, we have induced classes $c_0 = 1, c_1, \dotsc, c_r \in A(\Sigma)$. 

\begin{example}\label{ex:taut}
Suppose that $\Sigma$ is the permutohedral fan, i.e., the fan whose maximal cones are the Weyl chambers of the type $A_{N-1}$ root system. If $\varphi$ is the identity, then $c_i$ is the class denoted $c_i(\mathcal{Q}_{\mathrm{M}^{\perp}}^{\vee})$ in \cite{BEST}. If $\varphi$ is the negation map on $\mathbb{R}^N$, then $c_i$ is the class denoted $c_i(\mathcal{S}_{\mathrm{M}})$ there. 
\end{example}

\begin{example}
In \cite{EHL}, the authors constructed classes on the stellahedral toric variety associated to matroids called \emph{augmented tautological classes}. As described in \cite[Example 8.5 and 8.6]{EHL}, the classes denoted $s(\mathcal{Q}_{\mathrm{M}})$ and $c(\mathcal{Q}_{\mathrm{M}}^{\vee})$ there can be realized in the above manner. 
\end{example}

\begin{example}
In \cite{EFLS}, the authors constructed classes on the type $B$ permutohedral toric variety associated to delta-matroids. For a broad class of delta-matroids, those with \emph{enveloping matroids}, these classes can be realized in the above manner, see \cite[Lemma 8.8 and 8.9]{EFLS}. 
\end{example}

Let $n$ denote the dimension of the projective simplicial toric variety of $\Sigma$, i.e., the dimension of $\Sigma$ minus the dimension of the lineality space of $\Sigma$. 
Let $A(\Sigma)$ denote the Chow ring of the toric variety of $\Sigma$, so there is an isomorphism $\deg_A \colon A^n(\Sigma) \to \mathbb{R}$ which satisfies $\deg_A([\operatorname{pt}]) = 1$, where $[\operatorname{pt}]$ is the class of a point. Let $\mathcal{K} \subset A^1(\Sigma)$ be the ample cone. Set
$$B = A(\Sigma)[\zeta]/(\zeta^r + c_1 \zeta^{r-1} + \dotsb + c_r).$$
If the matroid $\mathrm{M}$ is realizable over a field $k$, then the discussion above can be run in reverse to show that there is a globally generated vector bundle on the toric variety of $\Sigma$ over $k$ whose dual has Chern classes $c_1, \dotsc, c_r$. The ring $B$ is identified with the Chow ring of the projectivization of this vector bundle. 

We will identify $A(\Sigma)$ with a subring of $B$ in the natural way. Note that $B$ is naturally graded, with $\zeta \in B^1$. 
Then there is an isomorphism $\deg_B \colon B^{n+r-1} \to \mathbb{R}$ which satisfies $\deg_B([\operatorname{pt}] \zeta^{r-1}) = 1$. 

\begin{theorem}\label{thm:kahler}
The ring $B$ has the K\"{a}hler package with respect to $\deg_B$ and the interior of the cone generated by $\mathcal{K}$ and $\zeta$. 
\end{theorem}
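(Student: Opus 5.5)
The plan is to realize $B$ as a limit or specialization of Chow rings of projective simplicial toric varieties, where the K\"ahler package holds, and then transfer the package to $B$.

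\textbf{Poincar\'e duality.} $B$ is a free $A(\Sigma)$-module with basis $1,\zeta,\dotsc,\zeta^{r-1}$. The degree map is $\deg_B(a\zeta^{r-1})=\deg_A(a)$ for $a\in A^n(\Sigma)$. Pairing $\sum a_j\zeta^j$ against $\sum b_k\zeta^k$ gives a block matrix that is triangular with respect to the $\zeta$-filtration, with diagonal blocks given by the Poincar\'e pairing of $A(\Sigma)$. Hence Poincar\'e duality for $B$ follows from Poincar\'e duality for $A(\Sigma)$.

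\textbf{Hard Lefschetz and Hodge--Riemann.} The key step is to reduce to the case where $\mathrm{M}$ is the uniform-like \emph{boolean} situation, by a degeneration. I would show that $B$ is the Chow ring of a "combinatorial projective bundle" fan $\widetilde\Sigma$ in $\mathbb{R}^m\times\mathbb{R}^N/\mathbb{R}\mathbf{1}$ whose cones are built from a cone $\sigma\in\Sigma$ together with a cone of the normal fan of the simplex on the basis $B_\sigma$ picked out by $\varphi_*(\sigma)$.

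Concretely, for each maximal cone $\sigma$, $\varphi_*(\sigma)$ lies in the normal cone of some vertex $e_{B_\sigma}$ of $P(\mathrm{M})$. Over $\sigma$, the fiber fan is the fan of $\mathbb{P}^{r-1}$ on the coordinates $B_\sigma$, twisted by the linear functions $t_{i}\circ\varphi_*$. The relation $\prod_{i\in B_\sigma}(\zeta+t_i)=0$ is exactly $\zeta^r+c_1\zeta^{r-1}+\dotsb+c_r$ restricted to $\sigma$. This is the toric analogue of $\mathbb{P}(\mathcal{E})$ for split bundles locally. The continuity of $\mathfrak{c}_i$, which uses the edge condition $\mathbf{e}_i-\mathbf{e}_j$, is what makes these local pieces glue into a genuine simplicial fan $\widetilde\Sigma$ with lineality space.

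I would then check that $\widetilde\Sigma$ is complete and simplicial, and that $A(\widetilde\Sigma)\cong B$ via the piecewise polynomial description (Payne/Brion). With that identification, I would show that for $\ell\in\mathcal{K}$, the class $\zeta+\ell$ is represented by a strictly convex piecewise linear function on $\widetilde\Sigma$. This is an ampleness statement, analogous to $\mathcal{O}(1)\otimes\pi^*(\text{ample})$ on the projectivization of a globally generated bundle.

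Given projectivity, McMullen's theorem gives the K\"ahler package on $A(\widetilde\Sigma)=B$ for the ample cone. The cone in the theorem, the interior of the cone spanned by $\mathcal{K}$ and $\zeta$, consists of classes $a\zeta+\ell$ with $a>0$ and $\ell\in\mathcal{K}$. Since HL and HR are invariant under positive rescaling, these reduce to $\zeta+\ell/a$, which is ample. That completes the argument.

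\textbf{Main obstacle.} The hard step is the gluing: showing that the local projective-space pieces form a fan. In particular, $\zeta$ corresponds to a piecewise linear function like $-\min_{i\in B}(\ldots)$, and one must check that the local fibers agree on overlaps of cones of $\Sigma$ mapping to different bases. I expect this to work only after refining: the naive construction may fail to be a fan when $\varphi_*(\sigma)$ meets lower-dimensional normal cones.

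If the fan construction fails, the fallback is a different route. I would prove HL/HR directly by induction on $\dim\Sigma$ via the standard McMullen/Adiprasito-style local argument: HR for $B$ follows from HL for $B$ plus HR for the "stars", which are rings of the same form for links in $\Sigma$ with the restricted matroids and contractions. The base case is $B=\mathbb{R}[\zeta]/\zeta^r$, i.e.\ $\mathbb{P}^{r-1}$. Continuity in $\ell$ then gives HR on the whole open cone, since the signature cannot jump while HL holds throughout.
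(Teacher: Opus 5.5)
The Poincar\'e duality part of your proposal is correct; it is the paper's Proposition~\ref{prop:PD}. The Hard Lefschetz and Hodge--Riemann parts, however, rest on a false claim: that $\zeta+\ell$ is strictly convex on a complete toric ``projective bundle'' fan $\widetilde\Sigma$ for every $\ell\in\mathcal{K}$.

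First, such a complete fan needs one fixed fiber $\mathbb{R}^r/\mathbb{R}\mathbf{1}$. That means labelling every $B_\sigma$ by $\{1,\dotsc,r\}$ so that the functions $t_i\circ\varphi_*$ glue to $r$ global piecewise linear functions $L_1,\dotsc,L_r$. The result is then the toric variety $\mathbb{P}(\mathcal{L}_1\oplus\dotsb\oplus\mathcal{L}_r)$ of a \emph{split} bundle, and its Chow relation factors as $\prod_i(\zeta+[L_i])$. Continuity of the symmetric functions $\mathfrak{c}_i$ does not give such a labelling; this is a global splitting question, not a local one. Over $\Sigma$ itself it can fail outright. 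For the globally generated toric bundle $T_{\mathbb{P}^2}$ one gets $B=A(\mathbb{P}^2)[\zeta]/(\zeta^2-3H\zeta+3H^2)$, which has no factorization into real linear factors.

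Second, and fatally, even when a split model exists (after refining, or in the permutohedral case via the line bundles $\mathcal{L}_i$), $\zeta$ is $c_1(\mathcal{O}(1))$ of a split bundle that is essentially never nef. So $\zeta$ is essentially never nef on that toric variety; this is exactly Example~\ref{ex:nonnef}. The cone in the theorem contains $\zeta+\ell/a$ for every $a>0$. If all of these were ample, their limit $\zeta$ would be nef. On a split model you therefore only get ampleness for $\ell$ deep inside $\mathcal{K}$, which is no better than Remark~\ref{rem:somecone}.

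Your fallback does not repair this. In McMullen-type inductions, the step from Hodge--Riemann on stars to Hard Lefschetz uses that the Lefschetz class is strictly convex on the fan. As the introduction notes, such an induction would prove that $\mathcal{K}$ is ample on the toric model of Example~\ref{ex:nonnef}, which is false. Your continuity argument also presupposes Hard Lefschetz on the whole cone, and that is the content of the theorem.

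The missing idea is to realize $\zeta$ as a \emph{nef} class on a \emph{non-complete} Lefschetz fan, instead of an ample class on a complete toric variety. The paper proceeds in four steps:
\begin{enumerate}
\item It takes the class $\delta$, the pullback of $\alpha$ along $(z,w)\mapsto z+w$, on the fan $\Sigma_{N,\mathrm{M}}$ with support $\mathbb{R}^N\times|\Sigma_{\mathrm{M}}|$. It proves $\delta^r+c_1\delta^{r-1}+\dotsb+c_r=0$ there by a long combinatorial computation (Theorem~\ref{thm:identity}); this is where the real work lies.
\item It embeds $B$ into $A(\Pi)$, where $\Pi$ has support $\mathbb{R}^m\times|\Sigma_{\mathrm{M}}|$. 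This fan is Lefschetz by Proposition~\ref{prop:Lefschetzfans}, whose key input is the Lefschetz property of Bergman fans (Lemma~\ref{lem:Pilefschetz}).
\item Injectivity of $B\to A(\Pi)$ follows from Poincar\'e duality and $\alpha^{r-1}\neq 0$ in $A(\Sigma_{\mathrm{M}})$ (Proposition~\ref{prop:subring}).
\item The subalgebra criterion, Proposition~\ref{prop:AHLcriterion}, finishes the proof, since $B$ is generated by classes in $\overline{\mathcal{K}}_{\Pi}$.
\end{enumerate}

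Note also that by Example~\ref{ex:AHK} the theorem implies the K\"ahler package for Chow rings of arbitrary, non-realizable matroids. An argument using only McMullen's theorem on a toric model would thus give a purely toric proof of that result, which is a further sign that the ampleness step cannot hold.
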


The proof of Theorem~\ref{thm:kahler} is purely combinatorial and does not make use of algebraic geometry. See Theorem~\ref{thm:multiplebundle} and Example~\ref{thm:matroidbundle} for extensions of Theorem~\ref{thm:kahler} to the case when there are multiple projective bundles, or when $A(\Sigma)$ is replaced by the Chow ring of a matroid.

In the case of Example~\ref{ex:taut}, this result was conjectured to the first author by Hunter Spink in 2021, and it was posed as an open problem by Christopher Eur at the BIRS workshop ``Algebraic Aspects of Matroid Theory'' in 2023. In varying levels of generality, the weaker statement that the Hodge--Riemann relations hold in degree $0$ and $1$ has been proved in \cite[Section 9]{BEST}, \cite[Section 8.3]{EHL}, and \cite[Theorem 1.19]{KKS}. The previous proofs of the Hodge--Riemann relations in degree $0$ and $1$ make use of algebraic geometry.

Theorem~\ref{thm:kahler} gives a positivity property for certain classes resembling the Chern classes of a globally generated toric vector bundle. In \cite{KavehManon} and \cite{KhanMaclagan}, a combinatorial abstraction of toric vector bundles is proposed. 
In \cite[Section 6]{KavehManon}, there is a notion of global generation for this concept. It would be interesting to compare this notion with the classes that appear in Theorem~\ref{thm:kahler}.

Theorem~\ref{thm:kahler} is part of a long line of results proving the K\"{a}hler package for combinatorially defined objects, see, e.g., \cites{McMullen93,Karu2004,EliasWilliamson,AHK18,ADH}. As discussed in \cite{HuhICM1} and \cite{ELNotices}, knowing that an algebra has the K\"{a}hler package has many applications. Besides the applications to log-concavity statements that come from the Hodge--Riemann relations in degree $0$ and $1$, we use Theorem~\ref{thm:kahler} to prove some Bloch--Gieseker style statements. See Theorem~\ref{thm:blochgieseker}. Additionally, we note that Theorem~\ref{thm:kahler} implies the main result of \cite{AHK18} in a simple way, see Example~\ref{ex:AHK}. Our proof of Theorem~\ref{thm:kahler} crucially uses the main result of \cite{AHK18}, so this does not give a new proof, but it indicates the power of the statement. 

One reason why Theorem~\ref{thm:kahler} is difficult to prove is that, in a key case, the ring $B$ can be identified with the Chow ring of a projective simplicial toric variety (even for non-realizable matroids), but the cone $\mathcal{K}$ does not consist of ample divisors, see Example~\ref{ex:nonnef}. Most existing proofs of the K\"{a}hler package in combinatorial settings are based on an induction which, along the way, proves that the elements of $\mathcal{K}$ are ample. See \cite[Section 5]{ELNotices} for a description of this inductive strategy. Such a strategy cannot be used in this case, because it would prove that the elements of $\mathcal{K}$ are also ample on this toric variety.

\medskip

If the matroid $\mathrm{M}$ is realizable over a field of characteristic $0$, then $B$ is the cohomology ring of a projective bundle over the toric variety of $\Sigma$ whose ample cone contains $\mathcal{K}$. In this case Theorem~\ref{thm:kahler} follows from general results about the cohomology of complex projective manifolds. 
However, Theorem~\ref{thm:kahler} is new even if the matroid is realizable over a field of positive characteristic. We can use this to verify new cases of Grothendieck's standard conjecture of Hodge type \cite{Grothendieck}. For a smooth projective variety $X$, let $A_{\operatorname{num}}(X)$ denote the Chow ring of $X$ modulo numerical equivalence. 

\begin{conjecture}[Hdg(X)]\label{conj:Hodge}
Let $X$ be a smooth projective variety of dimension $d$ over an algebraically closed field, and let $\ell \in A^1_{\operatorname{num}}(X)$ be an ample class. Then for any $i \le d/2$, the symmetric bilinear form $(x, y) \mapsto (-1)^i\deg(\ell^{d - 2i} xy)$ is positive definite on the kernel of multiplication by $\ell^{d - 2i + 1}$. 
\end{conjecture}

If $X$ is over the complex numbers and its cohomology is generated by algebraic cycles, then Conjecture~\ref{conj:Hodge} is a consequence of classical Hodge theory. However, if $X$ is over a field of positive characteristic, then Conjecture~\ref{conj:Hodge} is not known even if its \'{e}tale cohomology is generated by algebraic cycles. 

We are interested in projectivizations of toric vector bundles over projective simplicial toric varieties. As the \'{e}tale cohomology of a smooth projective toric variety is generated by algebraic cycles, the same is true for a projective bundle over a smooth projective toric variety. However, Conjecture~\ref{conj:Hodge} is open for the projectivization of a toric vector bundle over a field of positive characteristic. Theorem~\ref{thm:kahler} implies the following result. 

\begin{corollary}\label{thm:standard}
Let $X$ be a smooth projective toric variety, and let $\mathcal{E}$ be a globally generated toric vector bundle on $X$. For any ample class $\ell$ on $X$, the ample class $\ell + c_1(\mathcal{O}(1))$ satisfies Conjecture~\ref{conj:Hodge} on the projectivization of $\mathcal{E}$. 
\end{corollary}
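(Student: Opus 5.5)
The plan is to reduce Corollary~\ref{thm:standard} to Theorem~\ref{thm:kahler} by identifying the numerical Chow ring of $\mathbb{P}(\mathcal{E})$ with the ring $B$, and then checking that $\ell + c_1(\mathcal{O}(1))$ lies in the cone appearing there.

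First, set up $\mathcal{E}$ combinatorially as in the introduction. Decompose $H^0(X,\mathcal{E})$ into $N$ characters. This gives a surjection $\mathcal{O}^{\oplus N}\to\mathcal{E}$, a matroid $\mathrm{M}$ of rank $r$ (realizable over $k$), and a linear map $\varphi^*$ such that the fan $\Sigma$ of $X$ refines the normal fan of $\varphi^*(P(\mathrm{M}))$. The Chern classes of $\mathcal{E}^\vee$ are then the pulled-back classes $c_1,\dots,c_r\in A(\Sigma)$. Since $X$ is smooth and projective, $\Sigma$ is a projective simplicial (indeed unimodular) fan, so we are in the setting of Theorem~\ref{thm:kahler}.

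Second, compute the Chow ring of $\mathbb{P}(\mathcal{E})$. By the projective bundle formula, $A(\mathbb{P}(\mathcal{E}))$ is free over $A(X)$ with basis $1,\zeta,\dots,\zeta^{r-1}$, where $\zeta = c_1(\mathcal{O}(1))$, and it is subject to a single relation of degree $r$ with coefficients given by the Chern classes of $\mathcal{E}$ or $\mathcal{E}^\vee$. The sign depends on the convention for $\mathbb{P}$: whether it parametrizes lines or quotients. I will fix the convention in which $\mathcal{O}(1)$ is globally generated, being a quotient of the pullback of $\mathcal{E}$, and match the signs so that the relation is exactly $\zeta^r + c_1\zeta^{r-1}+\dots+c_r = 0$ with the classes above. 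If the convention produces $c_i(\mathcal{E})$ instead, I will replace $\varphi$ by $-\varphi$, as in Example~\ref{ex:taut}; this is the main bookkeeping point to verify. With real coefficients this gives $A(\mathbb{P}(\mathcal{E}))_{\mathbb{R}}\cong B$. The degree maps also agree, because $\zeta^{r-1}$ restricted to a fiber $\mathbb{P}^{r-1}$ has degree $1$.

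Third, pass to numerical equivalence. By Theorem~\ref{thm:kahler}, $B$ satisfies Poincaré duality. Hence any class that is numerically trivial pairs to zero with everything and so is already zero in $B$. Therefore $A_{\operatorname{num}}(\mathbb{P}(\mathcal{E}))_{\mathbb{R}} = B$, and rational equivalence and numerical equivalence coincide here. Note also that Conjecture~\ref{conj:Hodge} only concerns the rational classes in this real vector space, so positivity on all of $B$ implies it.

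Finally, check that $\ell+\zeta$ is in the open cone. The class $\ell$ lies in $\mathcal{K}$, which is open in $A^1(\Sigma)$. Then $\ell+\zeta$ lies in the interior of the cone generated by $\mathcal{K}$ and $\zeta$, because $A^1(B) = A^1(\Sigma)\oplus\mathbb{R}\zeta$ and small perturbations can be absorbed into $\ell$ and into the coefficient of $\zeta$. The Hodge--Riemann part of Theorem~\ref{thm:kahler}, applied to $\ell+\zeta$ with $d=n+r-1$, gives exactly the positivity required in Conjecture~\ref{conj:Hodge}. (Ampleness of $\ell+\zeta$ on $\mathbb{P}(\mathcal{E})$ follows since $\mathcal{O}(1)$ is globally generated and relatively ample, but it is not needed for the deduction.) The only real obstacle is the sign and convention check in the second step.
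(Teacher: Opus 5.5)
Your proposal is correct and is essentially the argument the paper intends. The paper states the corollary as an immediate consequence of Theorem~\ref{thm:kahler}, using the identification (explained in the introduction) of $B$ with the Chow ring of the projectivization of a globally generated toric bundle whose dual has Chern classes $c_1,\dotsc,c_r$, together with Poincar\'e duality to pass to numerical equivalence.

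The sign check you flagged comes out in your favour with no fallback needed. Take the convention in which $\mathcal{O}(1)$ is a quotient of $\pi^*\mathcal{E}$, so that $\mathcal{O}(-1)\subset\pi^*\mathcal{E}^\vee$. The vanishing of $c_r(\pi^*\mathcal{E}^\vee/\mathcal{O}(-1))$ then gives exactly $\zeta^r+c_1(\mathcal{E}^\vee)\zeta^{r-1}+\dotsb+c_r(\mathcal{E}^\vee)=0$, which matches the paper's $c_i$. This is fortunate, because replacing $\varphi$ by $-\varphi$ would not be legitimate in general: $\Sigma$ need not refine the normal fan of $-\varphi^*(P(\mathrm{M}))$.
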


It seems unlikely that Corollary~\ref{thm:standard} can be proved by lifting the projectivization of $\mathcal{E}$ to characteristic $0$ in general. Because the moduli of toric vector bundles on smooth projective toric varieties satisfies Murphy's law \cite{ToricVBMurphy}, it is easy to construct toric vector bundles $\mathcal{E}$ on a toric variety over a field of positive characteristic that cannot be lifted to a toric vector bundle in characteristic $0$. However, it is not clear how to check that the projectivization of $\mathcal{E}$ cannot be lifted to characteristic $0$. 

\subsection*{Acknowledgements}
We thank Hunter Spink for many helpful conversations and for conjecturing a version of Theorem~\ref{thm:kahler} to us, and we thank Federico Ardila for explaining aspects of the Lagrangian combinatorics of matroids to us. This work was conducted while the first author was at the Institute for Advanced Study, where he is supported by the Charles Simonyi Endowment and the Oswald Veblen Fund. The second author is partially supported by NSF Grant DMS-2053288, a U.S. Department of Education GAANN award, and the Simons Foundation SFI-MPS-SDF-00015018.

\section{Fans and a key computation}\label{sec:biperm}

In this section, we recall some fundamental facts about fans and their Chow rings. We then perform a key computation in the Chow ring of the bipermutohedral fan, a fan introduced in \cite{ADH}. 

\subsection{Fans and subdivisions}

All fans will be rational and polyhedral, and we will allow fans to have nontrivial lineality spaces. A fan in $\mathbb{R}^m$ with lineality space $L$ induces a fan without lineality in $\mathbb{R}^m/L$. 
As mentioned in the introduction, we allow fans to have lineality because the normal fan of a matroid polytope $\Sigma_{P(\mathrm{M})}$ naturally has a lineality space, and the distinguished piecewise polynomial functions on it do not descend to the quotient by the lineality space. 

Given a simplicial fan $\Sigma$ in $\mathbb{R}^m$ with lineality space $L$, its \emph{Chow ring} $A(\Sigma)$ is the Chow ring of the toric variety associated to the image of $\Sigma$ in $\mathbb{R}^m/L$. We will always consider Chow rings with real coefficients. There is an explicit presentation of $A(\Sigma)$, see \cite[pg. 106]{Ful93}. The group $A^1(\Sigma)$ can be interpreted as the space of piecewise linear functions on $\Sigma$ modulo the space of globally linear functions on $\Sigma$.

A subdivision of a fan $\Sigma$ in $\mathbb{R}^m$ with lineality space $L$ is a fan $\tilde{\Sigma}$ in $\mathbb{R}^m$ that has lineality space $L$ and has the property that each cone of $\Sigma$ is a union of cones of $\tilde{\Sigma}$. This induces a subdivision of the corresponding fans in $\mathbb{R}^m/L$, which corresponds to a birational morphism from the toric variety of $\tilde{\Sigma}$ to the toric variety of $\Sigma$. If $\Sigma$ and $\tilde{\Sigma}$ are both simplicial, then this induces a pullback map $A(\Sigma) \to A(\tilde{\Sigma})$.

\begin{proposition}\label{prop:subdivision}
Let $\Sigma$ be a simplicial fan, and let $\tilde{\Sigma}$ be a simplicial subdivision. Then the pullback map $A(\Sigma) \to A(\tilde{\Sigma})$ is injective. 
\end{proposition}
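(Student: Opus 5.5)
The plan is to use a projection-formula argument. The fans involved need not be complete, so there may be no degree map or Poincaré duality on $A(\Sigma)$. For this reason I will not argue by pairing classes against the top degree. Instead I will construct a left inverse to the pullback, namely a pushforward $\pi_* \colon A(\tilde{\Sigma}) \to A(\Sigma)$ with $\pi_*\pi^* = \mathrm{id}$. Here $\pi$ is the proper birational toric morphism from the toric variety of $\tilde{\Sigma}$ to that of $\Sigma$, after quotienting by the common lineality space $L$. Both varieties are simplicial, so with real coefficients their operational Chow rings agree with the Chow groups $A_*(-)_{\mathbb{R}}$ via capping with the fundamental class. Proper pushforward then gives a map $\pi_*$ on Chow groups, which transports to a map $A^k(\tilde{\Sigma}) \to A^k(\Sigma)$ preserving codimension.

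The key steps are as follows.
\begin{enumerate}
\item Check that $\pi$ is proper and birational. Properness holds because the supports of $\Sigma$ and $\tilde{\Sigma}$ coincide. Birationality holds because both varieties contain the same dense torus.
\item Apply the projection formula: $\pi_*(\pi^*x \cap [\tilde{X}]) = x \cap \pi_*[\tilde{X}]$.
\item Note that $\pi_*[\tilde{X}] = [X]$ because $\pi$ has degree one.
\item Conclude that if $\pi^*x = 0$, then $x \cap [X] = 0$, and hence $x = 0$ by the simplicial (rational) Poincaré duality isomorphism $A^k(X)_{\mathbb{R}} \cong A_{n-k}(X)_{\mathbb{R}}$.
\end{enumerate}

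The main obstacle is justifying the identification of operational classes with cycle classes for simplicial, possibly noncomplete, toric varieties. For this I would cite the standard fact that $X$ is a quotient of a smooth variety by a finite group (Fulton, \emph{Intersection theory}, Example 17.4.10, and Fulton--Sturmfels). This identification also ensures that the pullback $\pi^*$ is the usual one on the presentation of $A(\Sigma)$.

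If a purely combinatorial argument is preferred, I would instead show the following. Any subdivision can be dominated by a sequence of stellar subdivisions, because a common simplicial refinement can be reached by iterated stellar subdivisions up to further refinement. Injectivity of a composite implies injectivity of the first map, so it suffices to treat one stellar subdivision at a ray $\rho$ in the relative interior of a cone $\sigma$. For that case the pullback realizes the known presentation
\[ A(\tilde{\Sigma}) \cong A(\Sigma)[x_\rho]/I, \]
a blowup formula, which is visibly a free $A(\Sigma)$-module with $1$ as a basis element. That presentation check would be the routine, if tedious, core of this alternative route.
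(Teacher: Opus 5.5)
Your main argument is correct, but it takes a different route from the paper's. The paper argues combinatorially through the Minkowski-weight description of $A(\Sigma)$. In that picture, pulling back to $\tilde{\Sigma}$ gives each cone of $\tilde{\Sigma}$ the weight of the same-dimensional cone of $\Sigma$ containing it, and $0$ otherwise. Every cone of $\Sigma$ contains a cone of $\tilde{\Sigma}$ of the same dimension, so a nonzero weight stays nonzero.

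You instead build an explicit left inverse. You use proper pushforward, the projection formula $\pi_*(\pi^*x\cap[\tilde X]) = x\cap[X]$, and rational Poincar\'e duality $x\mapsto x\cap[X]$ for simplicial toric varieties. The two arguments are close relatives. Identifying $\pi^*$ with the naive pullback of weights is itself the projection formula, $\deg(\pi^*c\cap\tilde\beta)=\deg(c\cap\pi_*\tilde\beta)$. In the complete case, Kronecker duality $A^k(X)\cong\operatorname{Hom}(A_k(X),\mathbb{R})$ then plays the role that your Poincar\'e duality plays.

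What your version buys is a stronger conclusion: $A(\Sigma)$ is a direct summand of $A(\tilde{\Sigma})$ as an $A(\Sigma)$-module. It also never uses completeness. What it costs is importing operational Chow theory and rational duality for varieties with quotient singularities (Vistoli, Edidin--Graham), which is exactly the step you flag. The paper's one-liner, by contrast, stays inside the combinatorial framework it uses throughout.

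One correction to your fallback route. After a stellar subdivision, $A(\tilde{\Sigma})$ is generally \emph{not} a free $A(\Sigma)$-module. For example, blowing up a torus-fixed point of $\mathbb{P}^2$ gives $\dim A(\tilde{\Sigma})=4$, which is not a multiple of $\dim A(\Sigma)=3$. What is true, and is all you need, is that $A(\Sigma)$ is a direct summand of $A(\tilde{\Sigma})$, and your projection-formula argument already proves that.
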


\begin{proof}
This follows from a description of $A(\Sigma)$ in terms of Minkowski weights, see \cite[Proposition 5.6]{AHK18}, which are functions on the cones of $\Sigma$. A nonzero function will pull back to a nonzero function. 
\end{proof}

\medskip

The following example will play a crucial role in what follows. If $\mathrm{M}$ is a loopless matroid on $[N]$, then the \emph{Bergman fan} of $\mathrm{M}$ is a fan $\Sigma_{\mathrm{M}}$ in $\mathbb{R}^N$. For a subset $S$ of $[N]$, let $\mathbf{e}_S = \sum_{i \in S} \mathbf{e}_i$ be the indicator vector of $S$. The cones of the Bergman fan are in bijection with chains of proper nonempty flats of $\mathrm{M}$, with a chain $F_1 \subset \dotsb \subset F_s$ corresponding to the cone generated by $\mathbf{e}_{F_1}, \dotsc, \mathbf{e}_{F_s}$, and $\mathbb{R}\mathbf{e}_{[N]}$. If $\mathrm{M}$ has rank $r$, then $\Sigma_{\mathrm{M}}$ has dimension $r$ and has a $1$-dimensional lineality space. 

The Chow ring of $\Sigma_{\mathrm{M}}$ is called the Chow ring of $\mathrm{M}$. Chow rings of matroids have been extensively studied since their introduction in \cite{FY}. 

The Bergman fan of a Boolean matroid $U_{N,N}$ is the permutohedral fan appearing in Example~\ref{ex:taut}, and we will denote it by $\Sigma_N$. The Bergman fan of any loopless matroid on $\{1, \dotsc, N\}$ is a subfan of $\Sigma_N$. 

\medskip

A fan $\Sigma$ is called \emph{projective} if it is the normal fan of a polytope. 
A fan $\Sigma$ in $\mathbb{R}^m$ with lineality space $L$ is called \emph{quasi-projective} if it is a subfan of a projective fan in $\mathbb{R}^m$ with lineality space $L$. Let $\mathcal{K}(\Sigma)$ denote the image of the cone of strictly convex piecewise linear functions on $\Sigma$ in $A^1(\Sigma)$. If $\Sigma$ is quasi-projective, then $\mathcal{K}(\Sigma)$ is an open convex cone in $A^1(\Sigma)$. Each polytope whose normal fan has lineality space $L$ and has a subfan which is a coarsening of $\Sigma$ defines an element of $\mathcal{K}(\Sigma)$ via its support function. If $\Sigma$ is projective, then $\mathcal{K}(\Sigma)$ coincides with the ample cone of the toric variety associated to $\Sigma$. The definition of $\mathcal{K}(\Sigma)$ is somewhat subtle if $\Sigma$ is not complete, see \cite[Definition 5.1]{ADH}. 

Assume that $\Sigma$ is simplicial and quasi-projective, and let $d$ be the dimension of the quotient of $\Sigma$ by its lineality space $L$. Then the space of $d$-dimensional Minkowski weights supported on $\Sigma/L$ is naturally dual to $A^d(\Sigma)$, see \cite[Proposition 5.6]{AHK18}. 

We will need the theory of \emph{Lefschetz fans}, introduced in \cite{ADH}. A Lefschetz fan is a simplicial fan whose Chow ring satisfies the K\"{a}hler package, in the following sense. Assume that $\dim A^d(\Sigma) = 1$, and that there is a generator $w$ for the space of $d$-dimensional Minkowski weights on $\Sigma$ which is positive on all $d$-dimensional cones of $\Sigma$. Then $w$ defines an isomorphism $\deg_{\Sigma} \colon A^d(\Sigma) \to \mathbb{R}$. We say that $\Sigma$ is \emph{Lefschetz} if $A(\Sigma)$ has the K\"{a}hler package with respect to $\deg_{\Sigma}$ and $\mathcal{K}(\Sigma)$, and the same is true for every star fan of $\Sigma$. See \cite[Definition 1.5]{ADH}. We will need the following results. 

\begin{proposition}\label{prop:Lefschetzfans}
The following hold. 
\begin{enumerate}
\item \cite[Theorem 1.6]{ADH} If $\Sigma$ is a Lefschetz fan, then any quasi-projective simplicial fan with the same support as $\Sigma$ is a Lefschetz fan. 
\item \cite[Lemma 5.27]{ADH} A product of Lefschetz fans is Lefschetz. 
\item \cite[Theorem 8.9]{AHK18} The Bergman fan of a loopless matroid is Lefschetz. 
\item \cite[Example 5.7]{ADH} A projective simplicial fan is Lefschetz. 
\end{enumerate}
\end{proposition}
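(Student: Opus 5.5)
Each of the four items is a known result, so the plan is to check that the cited statements apply in the conventions fixed above, not to reprove them. The only convention that needs care is that we allow fans with a nontrivial lineality space $L$. Throughout, $A(\Sigma)$, $\mathcal{K}(\Sigma)$, Minkowski weights and star fans are all defined through the image of $\Sigma$ in $\mathbb{R}^m/L$. So the first step is to note that $\Sigma$ is Lefschetz exactly when its image in $\mathbb{R}^m/L$ is Lefschetz in the sense of \cite[Definition 1.5]{ADH}. We do this by checking that passing to the quotient commutes with the relevant operations:
\begin{itemize}
\item taking star fans;
\item taking products (the lineality space of $\Sigma_1\times\Sigma_2$ is $L_1\times L_2$);
\item the notions of quasi-projectivity and of strictly convex piecewise linear functions, since globally linear functions that vanish on $L$ are exactly those pulled back from $\mathbb{R}^m/L$.
\end{itemize}
With this reduction in place, items (1) and (2) are literally \cite[Theorem 1.6]{ADH} and \cite[Lemma 5.27]{ADH}.

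For item (3), the Bergman fan $\Sigma_{\mathrm{M}}$ has lineality space $\mathbb{R}\mathbf{e}_{[N]}$. Its quotient is the fan whose Chow ring is the Chow ring of $\mathrm{M}$ in the sense of \cite{FY}. \cite[Theorem 8.9]{AHK18} proves the K\"{a}hler package for this ring with respect to the cone of strictly convex piecewise linear functions on $\Sigma_{\mathrm{M}}$. We then have to check that star fans are again covered. The star of the cone of a chain $F_1\subset\dotsb\subset F_s$ is, up to lineality, the product of the Bergman fans of the minors $\mathrm{M}|_{F_1}$, $\mathrm{M}|_{F_2}/F_1,\dots,\mathrm{M}/F_s$. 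These are loopless matroids, so their Bergman fans satisfy the K\"{a}hler package by \cite{AHK18}. Item (2) then handles the product, and an induction on the dimension of the fan gives the full Lefschetz property.

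For item (4), a projective simplicial fan is the normal fan of a simple polytope, and its Chow ring is the McMullen polytope algebra. The K\"{a}hler package for it, with respect to the ample cone $\mathcal{K}(\Sigma)$, is \cite{McMullen93}. Each star fan of a projective fan is the normal fan of the corresponding face of the polytope, so it is again projective and simplicial. This verifies the condition on star fans, and we recover \cite[Example 5.7]{ADH}.

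The only step needing real care is the bookkeeping with lineality spaces together with the compatibility of $\mathcal{K}(\Sigma)$ for non-complete fans in item (3). For this I would rely on \cite[Definition 5.1]{ADH} and the remark there that it agrees with the AHK ample cone for Bergman fans.
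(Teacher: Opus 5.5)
Your proposal is correct and follows essentially the same approach as the paper, which gives no argument beyond the four citations. Your added checks supply the standard justification implicit in those references: passing to the quotient by the lineality space, identifying star fans of $\Sigma_{\mathrm{M}}$ with products of Bergman fans of the loopless minors $\mathrm{M}|_{F_{i+1}}/F_i$ and inducting on rank via item (2), and identifying star fans of a projective simplicial fan with normal fans of faces of a simple polytope so that \cite{McMullen93} applies.
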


There is a positive weight on the Bergman fan of a loopless matroid $\mathrm{M}$ of rank $r$ on $[N]$ which takes value $1$ on all top-dimensional cones. This defines both a distinguished isomorphism $\deg_{\mathrm{M}} \colon A^{r-1}(\Sigma_{\mathrm{M}}) \to \mathbb{R}$ and a Minkowski weight $[\Sigma_{\mathrm{M}}] \in A^{N-r}(\Sigma_{N})$.

\subsection{The bipermutohedral fan}

Let $\mathrm{M}$ be a loopless matroid of rank $r$ on $[N]$. Recall that, in the introduction, we defined certain piecewise polynomial functions $c_1, \dotsc, c_r$ on a fan $\Sigma$. In this section, we will consider the case when $\Sigma$ is the permutohedral fan and $\varphi$ is the identity. We are then in the situation of Example~\ref{ex:taut}, so $c_i$ is the class denoted $c_i(\mathcal{Q}_{\mathrm{M}^{\perp}}^{\vee})$ in \cite{BEST}.

We now define the bipermutohedral fan $\Sigma_{N,N} \subseteq \mathbb{R}^N\times \mathbb{R}^N$ via an explicit description of its rays and cones. Our construction differs slightly from that in \cite[Section 2]{ADH} as we consider $\Sigma_{N,N}$ to be a fan with a nontrivial two-dimensional lineality space. The bipermutohedral fan $\Sigma_{N,N}$, and certain subfans $\Sigma_{N,\mathrm{M}}$ that we will shortly define, will serve as our tropical model for the projectivization of $\mathcal{Q}_{\mathrm{M}^{\perp}}^\vee$. In Section \ref{sec:computation}, we will carry out a key computation in the Chow rings of these fans (Theorem \ref{thm:identity}). 

A \emph{bisubset} $S\vert T$ is an ordered pair of subsets of $[N]$ such that $S\cup T = [N]$ and $S\cap T \neq [N]$. We allow for one of $S$ or $T$ to be empty, and we say that $S \vert T$ is \emph{proper} if both $S$ and $T$ are nonempty.
Given two bisubsets $S\vert T$ and $S'\vert T'$, we say that $S\vert T \leq S' \vert T'$ if $S\subseteq S'$ and $T\supseteq T'$. Note that $\emptyset \vert [N] \leq S\vert T \leq [N] \vert \emptyset$ for all bisubsets $S\vert T$. Let $\mathcal{S}\vert \mathcal{T}= (S_1\vert T_1 < S_2 \vert T_2 < \dotsb < S_\ell \vert T_\ell)$ be a chain of proper bisubsets. An integer $0\leq j\leq \ell$ is a \emph{gap index} of $\mathcal{S}\vert \mathcal{T}$ if, after setting $S_0 \vert T_0 = \emptyset \vert [N]$ and $S_{\ell+1} \vert T_{\ell+1} = [N] \vert \emptyset$, $S_j \cup T_{j+1} \neq [N]$. We say that a chain of proper bisubsets $\mathcal{S}\vert \mathcal{T}$ is a \emph{biflag} if it has a gap index. If $\mathcal{S}\vert \mathcal{T}$ is a biflag of length $\ell$, we will set $S_0\vert T_0 = \emptyset \vert [N]$ and $S_{\ell+1} \vert T_{\ell+1} = [N] \vert \emptyset$ for convenience.

For an ordered pair $S\vert T$ of subsets of $[N]$, let $\mathbf{e}_{S\vert T} = (\sum_{i\in S} \mathbf{e}_i, \sum_{j\in T} \mathbf{e}_j) \in \mathbb{R}^N \times \mathbb{R}^N$ be the indicator vector of $S\vert T$.

\begin{definition}
    The bipermutohedral fan $\Sigma_{N,N}$ is the complete simplicial fan in $\mathbb{R}^N \times \mathbb{R}^N$ with the cones
    \[\sigma_{\mathcal{S}\vert \mathcal{T}} = \operatorname{span}\{\mathbf{e}_{\emptyset \vert [N]}, \mathbf{e}_{[N]\vert \emptyset}\} + \operatorname{cone}\{\mathbf{e}_{S\vert T} \}_{S\vert T \in \mathcal{S} \vert \mathcal{T}}, \quad \text{where $\mathcal{S}\vert \mathcal{T}$ is a biflag of proper bisubsets.} \]
\end{definition}

We say that a bisubset $S\vert F$ is a \emph{biflat} of $\mathrm{M}$ if $F$ is a flat of $\mathrm{M}$. Note that this definition differs from the one in \cite{ADHCombo, ADH}. If a biflag $\mathcal{S}\vert \mathcal{F}$ consists entirely of biflats of $\mathrm{M}$, then we say that $\mathcal{S}\vert \mathcal{F}$ is a biflag of $\mathrm{M}$. An index $j$ is a gap index of $\mathcal{S}\vert \mathcal{F}$ if and only if $\overline{S_j^c} \not\subseteq F_{j+1}$ where $\overline{S_j^c}$ denotes the inclusion-minimal flat of $\mathrm{M}$ containing $S_j^c$.

The \emph{projective bundle fan} of $\mathrm{M}$, denoted by $\Sigma_{N,\mathrm{M}}$, is the subfan of $\Sigma_{N,N}$ whose support is $\mathbb{R}^N\times \vert \Sigma_\mathrm{M}\vert $. More concretely, $\Sigma_{N,\mathrm{M}}$ is the subfan of $\Sigma_{N,N}$ whose cones correspond to the biflags of $\mathrm{M}$. The projective bundle fan of the Boolean matroid $U_{N,N}$ is the bipermutohedral fan $\Sigma_{N,N}$.

For $j\in [N]$, let $\alpha_j$ be the piecewise linear function on $\mathbb{R}^N$ defined by $\alpha_j(z) = \min_{i\in [N]}(z_j-z_i)$. Each $\alpha_j$ is a piecewise linear function on $\Sigma_N$ and, modulo global linear functions, they all belong to the same equivalence class which we denote by $\alpha$. Let $\gamma_j$ and $\overline{\gamma}_j$ be the pullback of $\alpha_j$ along the first and second projections $\pi_1, \pi_2 \colon \mathbb{R}^N\times \mathbb{R}^N \to \mathbb{R}^N$, respectively. Finally, let $\delta_j$ be the piecewise linear function on $\mathbb{R}^N\times \mathbb{R}^N$ obtained by pulling back $\alpha_j$ along the addition map 
\[\mathbb{R}^N\times \mathbb{R}^N \to \mathbb{R}^N, \quad (z,w)\mapsto z+w. \]
In \cite[Section 2]{ADH} it is shown that the functions $\gamma_j$, $\overline{\gamma}_j$ and $\delta_j$ are piecewise linear functions on the bipermutohedral fan and hence restrict to piecewise linear functions on $\Sigma_{N,\mathrm{M}}$ for every matroid $\mathrm{M}$. Modulo global linear functions, the equivalence classes of $\gamma_j$, $\overline{\gamma}_j$ and $\delta_j$ do not depend on the choice of $j\in [N]$, and we write $\gamma$, $\overline{\gamma}$ and $\delta$ for their common equivalence classes.

In what follows, it will be convenient to have a presentation of $A(\Sigma_{N,\mathrm{M}})$ in terms of generators and relations. First, consider the polynomial ring $S_{N,\mathrm{M}}$ with real coefficients and variables $x_{S\vert F}$ indexed by the proper biflats of $\mathrm{M}$. For any set of biflats $\mathcal{S}\vert \mathcal{F}$, let $x_{\mathcal{S}\vert \mathcal{F}}$ be the monomial
\[x_{\mathcal{S}\vert \mathcal{F}} = \prod_{S\vert F \in \widehat{\mathcal{S}\vert \mathcal{F}}}x_{S\vert F}\]
where $\widehat{\mathcal{S}\vert \mathcal{F}} \subseteq \mathcal{S}\vert\mathcal{F}$ is the set of proper biflats of $\mathcal{S}\vert\mathcal{F}$.
For every $j\in [N]$, consider the following elements of $S_{N, \mathrm{M}}$:
\[\gamma_j = \sum_{j \in S, \,  S\not= [N]} x_{S|F}, \quad \overline{\gamma}_j = \sum_{j \in F, \, F\not= [N]} x_{S|F}, \text{ and } \delta_j = \gamma_j + \overline{\gamma}_j - \sum_{S,F \not=[N]} x_{S|F}.\]
We write $I_{N,\mathrm{M}}$ for the ideal generated by the monomials $x_{\mathcal{S}\vert \mathcal{F}}$, where $\mathcal{S}\vert \mathcal{F}$ is not a biflag, and $J_{N,\mathrm{M}}$ for the ideal generated by the linear forms $\gamma_i-\gamma_j$ and $\overline{\gamma}_i-\overline{\gamma}_j$ for each $i$ and $j$ in $[N]$. 

\begin{definition}
    The Chow ring of the projective bundle fan of $\mathrm{M}$ is the quotient
    \[A(\Sigma_{N,\mathrm{M}}) = S_{N,\mathrm{M}}/(I_{N,\mathrm{M}}+J_{N,\mathrm{M}}). \]
\end{definition}

Let $\gamma$,  $\overline{\gamma}$, and $\delta$ be the classes of the linear forms $\gamma_j$, $\overline{\gamma}_j$ and $\delta_j$ in $A(\Sigma_{N,\mathrm{M}})$. After identifying $A^1(\Sigma_{N,\mathrm{M}})$ with the space of piecewise linear functions on $\Sigma_{N,\mathrm{M}}$ modulo global linear functions, these classes agree with the previously defined classes by the same name. 

For $1 \le j \le r$, we consider the class in $A^1(\Sigma_{N, N})$ given by
\begin{equation*}
u_j = \sum_{S \not= [N], \, \operatorname{rk}(S^c) < j} x_{S|T} - \gamma.
\end{equation*}

\begin{lemma}\label{lem:uReduction}
The $i$th elementary symmetric function of $\{u_1, \dotsc, u_r\}$ is equal to $\pi_1^* c_i$. 
\end{lemma}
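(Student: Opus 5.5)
The plan is to prove the identity at the level of piecewise polynomial functions on $\Sigma_{N,N}$. We write each $u_j$ as an honest piecewise linear function, namely the pullback along $\pi_1$ of a piecewise linear function $g_j$ on the permutohedral fan $\Sigma_N$. We then check that, on each maximal cone, the multiset $\{g_1,\dots,g_r\}$ equals $\{t_b : b\in B\}$ for the basis $B$ indexing the cone of $\Sigma_{P(\mathrm{M})}$ that contains that cone. Since $\mathfrak{c}_i$ is defined there as the $i$th elementary symmetric function of the $t_b$, this gives $e_i(g_1,\dots,g_r)=\mathfrak{c}_i$ on $\Sigma_N$. Pulling back along $\pi_1$ then gives the lemma, since the Chow class of a product of piecewise linear functions is the product of their classes (\cite{PayneCohomology}).

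\textbf{Step 1 (the functions $g_j$).} For $z\in\mathbb{R}^N$ set
\[
g_j(z)=\sup\{t : \operatorname{rk}\{i : z_i<t\}<j\},
\]
the threshold at which the sublevel sets of $z$ first reach rank $j$. Order the coordinates increasingly and run the greedy algorithm, which produces the lexicographically minimal-weight basis $B(z)=\{b_1,\dots,b_r\}$ with $z_{b_1}\le\dots\le z_{b_r}$. The rank of a sublevel set increases exactly when it acquires a greedy element, so $g_j(z)=z_{b_j}$. The greedy basis is constant on each Weyl chamber, so $g_j$ is linear on each cone of $\Sigma_N$. Moreover $B(z)$ is the vertex of $P(\mathrm{M})$ selected by $z$ under the paper's normal-fan convention; if that convention selects the maximal-weight vertex instead, replace sublevel by superlevel sets. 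Either way, $\{g_1(z),\dots,g_r(z)\}=\{z_b : b\in B(z)\}$, and hence $e_i(g_1,\dots,g_r)=\mathfrak{c}_i$ as piecewise polynomials.

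\textbf{Step 2 (matching $u_j$ with $g_j\circ\pi_1$).} Evaluate on rays. Since $g_j$ is homogeneous and translation-equivariant, $g_j(z+s\mathbf{e}_{[N]})=g_j(z)+s$. For a proper $S$, the sublevel set of $\mathbf{e}_S$ at $t\in(0,1]$ is $S^c$, and it is everything for $t>1$. Hence $g_j(\mathbf{e}_S)=1$ if $\operatorname{rk}(S^c)<j$ and $0$ otherwise. So on the ray $\mathbf{e}_{S|T}$, the function $g_j\circ\pi_1$ takes value $1$ exactly when $S\ne[N]$ and $\operatorname{rk}(S^c)<j$; for $S=[N]$ it takes value $1$, coming from the lineality direction.

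The Courant functions $x_{S|T}$ vanish on the lineality space. Therefore $g_j\circ\pi_1-\sum_{\operatorname{rk}(S^c)<j}x_{S|T}$ is a piecewise linear function that vanishes on every proper ray. Its values on the lineality space $\operatorname{span}\{\mathbf{e}_{\emptyset|[N]},\mathbf{e}_{[N]|\emptyset}\}$ match those of $-\gamma_k$ for a suitable representative of $-\gamma$. Concretely, $\pm\alpha_k$ pulled back along $\pi_1$ is, up to a global linear function, the sum of the Courant functions of the bisubsets with $k\in S$, corrected by the coordinate function along $\mathbf{e}_{[N]}$. I would fix signs by checking the case $j=1$, where $\operatorname{rk}(S^c)<1$ forces $S=[N]$. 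The intended conclusion is that, after subtracting a global linear function, $u_j$ is represented exactly by $g_j\circ\pi_1$.

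\textbf{Main obstacle.} The delicate point is Step 2. Elementary symmetric functions of piecewise linear functions are not invariant under shifting each factor by different global linear functions. So I must exhibit representatives $\tilde u_j$ of the classes $u_j$ with $\tilde u_j=g_j\circ\pi_1+\lambda$ for one and the same global linear function $\lambda$. Any such common shift changes $e_i$ only by an element of the ideal generated by global linear functions; indeed it is also absorbed by the translation-equivariance of $\mathfrak{c}_i$ along the lineality. I would first try $\tilde u_j=\sum_{\operatorname{rk}(S^c)<j}x_{S|T}-\gamma_k\circ\pi_1$ with the same fixed $k$ for all $j$, and verify on rays that it equals $g_j\circ\pi_1$ minus a $j$-independent linear function. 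This reduces to the sign and normalization bookkeeping for $\alpha_k$ along $\mathbf{e}_{[N]}$.
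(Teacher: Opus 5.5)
Your argument is correct in substance, but it takes a different route from the paper. The paper does no computation on $\Sigma_{N,N}$. It quotes \cite[Appendix III]{BEST}, where the full Higgs lift gives $c_i=e_i(w_1,\dots,w_r)$ in $A(\Sigma_N)$ with $w_j=\sum_{S\neq[N],\,\operatorname{rk}(S^c)<j}x_S-\alpha$. It then pulls back using $\pi_1^*\alpha=\gamma$ and $\pi_1^*x_S=\sum_T x_{S|T}$. Your Steps 1 and 2 amount to a self-contained proof of that cited input. The class $w_j$ is exactly the class of your greedy threshold function $g_j$, and the greedy algorithm gives $e_i(g_1,\dots,g_r)=\mathfrak{c}_i$ on the nose. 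Your route buys transparency: it explains where the condition $\operatorname{rk}(S^c)<j$ comes from, and it shows that the lemma forces the convention in which the cone of $B$ is where $\mathbf{e}_B$ \emph{minimizes} $\langle z,\cdot\rangle$. The cost is redoing what the citation supplies. It would be cleaner to prove $w_j=[g_j]$ on $\Sigma_N$ first and then apply $\pi_1^*$, so that the rays $\mathbf{e}_{[N]|T}$ never need separate treatment.

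Three points in Step 2 need correcting.

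First, your ``main obstacle'' is not one. The lemma is an identity in $A(\Sigma_{N,N})$, and piecewise polynomials map to $A(\Sigma_{N,N})$ by a ring homomorphism that kills every global linear function. So it suffices to show $[u_j]=[g_j\circ\pi_1]$ for each $j$ separately; different shifts $\lambda_j$ would be harmless.

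Second, the convention hedge does not work ``either way.'' With superlevel sets you get $g_j(\mathbf{e}_S)=1$ if and only if $\operatorname{rk}(S)\ge j$, which does not match $u_j$.

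Third, do the bookkeeping with the presentation's representative $\gamma_k=\sum_{k\in S,\,S\neq[N]}x_{S|T}$, not with the displayed formula $\alpha_k(z)=\min_i(z_k-z_i)$. That function equals $-\sum_{k\notin S}x_S$ on $\Sigma_N$, which differs from $\sum_{k\in S}x_S$ by the non-linear function $\max z-\min z$. So your sentence about ``$\pm\alpha_k$'' is not right as stated. The choice consistent with the presentation is $\max_i(z_k-z_i)=z_k-\min z$, which is also what your $j=1$ check gives ($g_1=\min z$, since $\mathrm{M}$ is loopless).

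With this representative, and keeping the restriction $S\neq[N]$ from the definition of $u_j$, the verification is immediate. The piecewise linear function $\sum_{S\neq[N],\,\operatorname{rk}(S^c)<j}x_{S|T}-\gamma_k$ takes the value $[\operatorname{rk}(S^c)<j]-[k\in S]$ (Iverson brackets) on $\mathbf{e}_{S|T}$ for $S\neq[N]$. It takes the value $0$ on $\mathbf{e}_{[N]|T}$, $\mathbf{e}_{[N]|\emptyset}$ and $\mathbf{e}_{\emptyset|[N]}$. The function $g_j\circ\pi_1-z_k$ takes the same values. Hence the two coincide on every cone of the simplicial fan $\Sigma_{N,N}$, and the argument closes.
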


\begin{proof}
In \cite[Appendix III]{BEST}, the authors describe classes $w_1, \dotsc, w_r$ in $A^1(\Sigma_N)$ such that $c_i$ is the $i$th elementary symmetric function of $\{w_1, \dotsc, w_r\}$. These classes depend on the choice of some auxiliary matroids; using the ``full Higgs lift'' as described in \cite[Appendix III]{BEST}, we have
$$w_i =  \sum_{\operatorname{rk}(S^c) < i, \, S\not= [N]} x_S - \alpha.$$
By definition, $\pi_1^* \alpha = \gamma$. We have $\pi_1^* x_S = \sum_{T} x_{S|T}$, giving the result. 
\end{proof}

\subsection{A projective bundle relation}\label{sec:computation}

For this section, we fix a loopless matroid $\mathrm{M}$ of rank $r$ on the ground set $[N]$. We will often refer to the biflats and biflags of $\mathrm{M}$ simply as biflats and biflags. We will also often restrict classes like $\gamma, \overline{\gamma}, c_1, \dotsc, c_r,$ and $\delta$ to $A(\Sigma_{N, \mathrm{M}})$ without comment. 
The goal of this section is to prove that the following relation holds in the Chow ring of $\Sigma_{N,\mathrm{M}}$.

\begin{theorem}\label{thm:identity}
In $A(\Sigma_{N, \mathrm{M}})$, we have $\delta^r + c_1 \delta^{r-1} + \dotsb + c_r = 0$. 
\end{theorem}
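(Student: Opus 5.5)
By Lemma~\ref{lem:uReduction}, the polynomial $\delta^r + c_1\delta^{r-1} + \dotsb + c_r$ factors as $\prod_{j=1}^r (\delta + u_j)$. The theorem therefore reduces to showing that this product of $r$ linear forms vanishes in $A(\Sigma_{N,\mathrm{M}})$. The plan is to choose, for each $j$, a representative piecewise linear function for $\delta + u_j$ modulo global linear functions. We aim to arrange that the representatives have disjoint supports in an appropriate sense: on every cone of $\Sigma_{N,\mathrm{M}}$, at least one of the $r$ chosen representatives vanishes identically. Equivalently, in the Stanley--Reisner presentation the product expands into monomials $x_{\mathcal{S}|\mathcal{F}}$ indexed by sets of biflats that are never biflags, and all of these lie in $I_{N,\mathrm{M}}$.

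Concretely, I would write $\delta = \gamma_k + \overline{\gamma}_k - \sum_{S,F\neq[N]} x_{S|F}$ for a fixed index $k$. Substituting the formula for $u_j$, the $\gamma$ terms cancel, and using the linear relations in $J_{N,\mathrm{M}}$ we get
\[
\delta + u_j \;=\; \overline{\gamma}_k - \sum_{S,F\neq [N]} x_{S|F} + \sum_{S\neq[N],\ \operatorname{rk}(S^c)<j} x_{S|F} .
\]
The key freedom is the choice of $k$. Since $\overline{\gamma}_k \equiv \overline{\gamma}_{k'}$, we may use a different $k$ for each factor, and more flexibly we may replace $\overline{\gamma}$ by any other linear representative. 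I would then simplify to
\[
\delta+u_j \;=\; \sum_{k\in F,\ F\neq[N]} x_{S|F} \;-\; \sum_{F\neq[N],\ \operatorname{rk}(S^c)\ge j} x_{S|F} \;+\; (\text{terms with } F=[N]).
\]
This suggests choosing, for each $j$, a ``generic'' representative so that $\delta+u_j$ is supported on the biflats $S|F$ satisfying a rank condition tied to $j$. The point to exploit is that along a biflag the gap index forces $\overline{S_j^c}\not\subseteq F_{j+1}$. The rank of $S^c$ increases along a chain in the reverse direction while the flats $F$ decrease, so a biflag contains fewer than $r$ ``jumps'' of the appropriate kind.

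The core step, and the main obstacle, is the counting argument: show that no biflag can make all $r$ factors simultaneously nonvanishing. The heuristic comes from geometry. For a realizable $\mathrm{M}$, $\delta$ plays the role of $c_1(\mathcal{O}(1))$ on $\mathbb{P}(\mathcal{Q}^\vee)$, and each $\delta+u_j$ is the class of a section vanishing along a locus; the $r$ loci have empty common intersection. Combinatorially, I would try to prove the following by induction along the biflag. If $\mathcal{S}|\mathcal{F}$ is a biflag and every factor $\delta+u_j$ is nonzero on its cone, then there are indices realizing each rank value $0,1,\dots,r-1$ for $\operatorname{rk}(S_i^c)$ together with a matching drop in $F$, and this contradicts the existence of a gap index. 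The hard part will be making the choice of representatives, meaning the choice of $k$ for each factor, work uniformly. If a fixed choice fails, I would instead localize. Since $A(\Sigma_{N,\mathrm{M}})$ embeds, via Minkowski weights as in Proposition~\ref{prop:subdivision}, into a space of functions on top cones, it suffices to check that the degree of $x\cdot\prod_j(\delta+u_j)$ vanishes for every complementary monomial $x$. Each such check reduces to restricting the product to a star fan, which is a product of smaller permutohedral and Bergman-type fans. There the identity becomes an instance of the same relation for a smaller matroid, a minor of $\mathrm{M}$, and the theorem follows by induction on $N$.
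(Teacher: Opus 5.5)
Your reduction via Lemma~\ref{lem:uReduction} and your formula for $\delta+u_j$ are correct; they agree with the paper's $\delta+u_j=\overline\gamma+v_j^+-v_j^-$. However, neither of your two strategies for showing $\prod_j(\delta+u_j)=0$ works as stated.

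\textbf{The disjoint-support plan.} This plan asks for the product to vanish already in the face ring, i.e.\ as a piecewise polynomial. That fails in the smallest nontrivial case, $\mathrm{M}=U_{2,2}$.

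Here $\Sigma_{2,2}$ is a hexagon with rays, in cyclic order, $1|12,\ 1|2,\ 12|2,\ 2|12,\ 2|1,\ 12|1$. One computes $\delta+u_1=x_{12|1}-x_{1|2}$ and $\delta+u_2=x_{1|12}+x_{12|1}+x_{2|1}+x_{2|12}$. The only linear relations are $\gamma_1-\gamma_2$ and $\overline\gamma_1-\overline\gamma_2$.

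In every representative of $\delta+u_2$, the coefficients at $1|12,2|12$ sum to $2$, and those at $12|1,12|2$ sum to $1$. So some representative of $\delta+u_1$ would have to vanish at $1|12$ or $2|12$ together with both neighbours. This forces it to be $x_{12|2}-x_{2|1}$ or $x_{12|1}-x_{1|2}$. In either case the representative of $\delta+u_2$ would then have to vanish at both $12|1$ and $12|2$, which is impossible. Yet the product is $0$ in $A(\Sigma_{2,2})$.

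So no choice of $k$'s, or of any global representatives, yields your counting lemma. This is why the paper's canonical expansion picks the representative $\overline\gamma_e$ depending on the monomial being multiplied, not once per factor. Note also that $N=2$ is exactly where your fallback is vacuous, since the complementary degree $N-2$ is $0$.

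\textbf{The localization fallback.} This step is asserted rather than argued, and its key claim is not right. Localize at a ray $S|F$:
\begin{itemize}
\item $\pi_1^*c$ becomes a product of the classes of two minors of $\mathrm{M}$, one on $S$ and one on $S^c$;
\item $\delta$ becomes the pullback of $\alpha$ on $\mathbb{R}^{S\cap F}$ under $(z,w)\mapsto (z+w)|_{S\cap F}$, or stays the full $\delta$ when $S\cap F=\emptyset$.
\end{itemize}
For $F\neq[N]$ these live on mismatched ground sets. For instance, at $S|S^c$ the star carries the analogue of the relation on $\mathbb{P}(\mathcal{E}_1\boxplus\mathcal{E}_2)$ over a product, not on a product of projective bundles. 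So the localized identity is a new statement rather than Theorem~\ref{thm:identity} for a minor, and your induction on $N$ does not close.

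\textbf{What the paper does instead.} It uses the lexicographically-decreasing-biflag cancellation (Proposition~\ref{thrm:min_dec}) only to kill the $v_j^+$ terms. It then proves $\prod_j(\overline\gamma-v_j^-)=0$ globally via Minkowski weights on $\Sigma_{N,N}$. The inputs are $\iota^*v_r^-=0$ and $\overline\gamma\cdot[\Sigma_{N,\mathrm{M}}]=[\Sigma_{N,T\mathrm{M}}]$, with induction on the rank through truncation rather than on $N$ through minors.
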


The proof of Theorem~\ref{thm:identity} is quite involved and requires us to start with some structural definitions.

Recall that for a biflag $\mathcal{U}\vert \mathcal{H}$ of length $k$, an integer $0\leq j\leq k$ is a gap index of $\mathcal{U}\vert \mathcal{H}$ if $U_j \cup H_{j+1}$ is not equal to the entire ground set of $\mathrm{M}$. By definition, all biflags have at least one gap index and we will pay special attention to their first gap index. To emphasize this, we will write biflags of $\mathrm{M}$ as 
\[\mathcal{S}\vert\mathcal{F}<\mathcal{T}\vert\mathcal{G} = (S_1 \vert F_1 < \dotsb < S_s\vert F_s < T_1\vert  G_1<\dotsb <T_\ell \vert G_\ell) \]
where $s$ is the smallest gap index of $\mathcal{S}\vert\mathcal{F}<\mathcal{T}\vert\mathcal{G}$. We refer to $\mathcal{S}\vert \mathcal{F}$ and $\mathcal{T}\vert \mathcal{G}$ as the first and second components of $\mathcal{S}\vert\mathcal{F}<\mathcal{T}\vert\mathcal{G}$, respectively. Note that it is possible for a biflag to have an empty first or second component. We will use $s$ to refer to the length of $\mathcal{S}\vert \mathcal{F}$ and $\ell$ for the length of $\mathcal{T}\vert \mathcal{G}$. Finally, it will be convenient for us to define $S_0\vert F_0= \emptyset \vert [N]$, $T_0\vert G_0 = S_s \vert F_s$, and $T_{\ell+1} \vert G_{\ell+1} = [N]\vert \emptyset$.

\begin{example}\label{ex:biflag}
We will use the graphic matroid of the pyramid $\Gamma$, illustrated in Figure~\ref{fig:graph}, as a running example in this section. Set $E = [8]$. The set of gap indices of the biflag
\[126\vert E < 126\vert 34578  < 1246 \vert 34578 < 12456\vert 378 < 124567\vert 378\]
is equal to $\{3,5\}$. As $3$ is the first gap index, we write this biflag as $\mathcal{S}\vert \mathcal{F} < \mathcal{T}\vert \mathcal{G}$ where 
\[\mathcal{S}\vert \mathcal{F} = (126\vert E < 126\vert 34578  < 1246 \vert 34578) \quad\text{and}\quad \mathcal{T}\vert \mathcal{G} = (12456\vert 378 < 124567\vert 378).\]

\begin{figure}[!ht]
    \begin{tikzpicture} [scale=1]
\node[fill, circle, minimum size =0.5em, inner sep =0em] (e) at (0,0) {};
\node[fill, circle, minimum size =0.5em, inner sep =0em] (a) at (1,1) {};
\node[fill, circle, minimum size =0.5em, inner sep =0em] (b) at (-1,1) {};
\node[fill, circle, minimum size =0.5em, inner sep =0em] (c) at (-1,-1) {};
\node[fill, circle, minimum size =0.5em, inner sep =0em] (d) at (1,-1){};

\draw (a) to node[above]{$1$} (b);
\draw (b) to node[left]{$2$} (c);
\draw (c) to node[below]{$3$} (d);
\draw (d) to node[right]{$4$} (a);

\draw (a) to node[left]{$5$} (e);
\draw (b) to node[right]{$6$} (e);
\draw (c) to node[right]{$7$} (e);
\draw (d) to node[left]{$8$} (e);
\end{tikzpicture}
\caption{The pyramid graph $\Gamma$}
\label{fig:graph}
\end{figure}
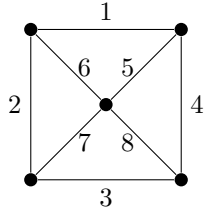
\end{example}

The following kinds of biflags have second components with desirable behavior.
\begin{definition}
  For a biflag $\mathcal{S}\vert \mathcal{F} < \mathcal{T}\vert \mathcal{G}$ and an index $0\leq i \leq \ell$, we say that $\mathcal{S}\vert \mathcal{F} < \mathcal{T}\vert \mathcal{G}$ is \emph{lexicographically decreasing at $i$} if
  \[\min\left(\overline{S_s^c} \setminus G_{i+1}\right) \in G_i. \]
  If $\mathcal{S}\vert \mathcal{F} < \mathcal{T}\vert \mathcal{G}$ is lexicographically decreasing at all such indices, then we say that $\mathcal{S}\vert \mathcal{F} < \mathcal{T}\vert \mathcal{G}$ is \emph{lexicographically decreasing}. 
  \end{definition}
As $S_s\vert F_s$ is the start of the first gap of $\mathcal{S}\vert \mathcal{F} < \mathcal{T}\vert \mathcal{G}$, all biflags are lexicographically decreasing at index $0$.
See Example~\ref{ex:lex_dec} for an illustration of this definition. We justify the name ``lexicographically decreasing'' in Remark \ref{rmk:lex_dec}. A biflag consisting of a single proper biflat $T\vert G$ is lexicographically decreasing whenever $\min([N])=1\in G$. The following lemma gives us strong control over the ranks of $\overline{S_s^c}\cap G_i$ and $T_i^c$ in lexicographically decreasing biflags.

\begin{lemma}\label{lem:dyck}
  Suppose that $\mathcal{S}\vert \mathcal{F} < \mathcal{T}\vert \mathcal{G}$ is lexicographically decreasing, and let $\operatorname{rk}(\overline{S_s^c})=a$. Then
  \[\operatorname{rk}(\overline{S_s^c} \cap G_i)> \operatorname{rk}(\overline{S_s^c} \cap G_{i+1}) \quad \text{and}\quad \operatorname{rk}(T_i^c) \leq \operatorname{rk}(\overline{S_s^c} \cap G_i) \leq a-i \]
  for all $1\leq i \leq \ell$. 
\end{lemma}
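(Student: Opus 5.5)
Write $A = \overline{S_s^c}$, so $\operatorname{rk}(A)=a$. The plan has three parts: prove the strict decrease of $\operatorname{rk}(A\cap G_i)$ from the lexicographic condition, get $\operatorname{rk}(A\cap G_i)\le a-i$ by induction, and get $\operatorname{rk}(T_i^c)\le \operatorname{rk}(A\cap G_i)$ from the containment $T_i^c\subseteq G_i$.

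\emph{Strict decrease.} Fix $1\le i\le\ell$. Since the $G$'s decrease along the biflag, $A\cap G_{i+1}\subseteq A\cap G_i$. Both are flats, being intersections of flats. Let $m=\min(A\setminus G_{i+1})$. This set is nonempty:
\begin{itemize}
\item if $i<\ell$, then $i$ is a gap index only after position $s$, and we need $A\not\subseteq G_{i+1}$;
\item if $i=\ell$, then $G_{\ell+1}=\emptyset$, so $A\setminus G_{\ell+1}=A$, which is nonempty because $S_s\ne[N]$.
\end{itemize}
For $i<\ell$, suppose $A\subseteq G_{i+1}$. Since $T_j\supseteq S_s$ for every $j$, we get $S_s^c\subseteq G_{i+1}$. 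I then expect to derive a contradiction with $s$ being the first gap index, using $S_s\cup G_1\ne[N]$, i.e.\ $S_s^c\not\subseteq G_1\supseteq G_{i+1}$. This is the step to check carefully: it is exactly where the first gap condition enters.

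Lexicographic decrease at $i$ gives $m\in G_i$, so $m\in (A\cap G_i)\setminus(A\cap G_{i+1})$. Because $A\cap G_{i+1}$ is a flat strictly contained in the flat $A\cap G_i$, its rank is strictly smaller.

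\emph{Upper bound $a-i$.} For $i=1$, use $\operatorname{rk}(A\cap G_1)\le \operatorname{rk}(A)=a$. We need strictness here too, i.e.\ $A\not\subseteq G_1$. This follows from the gap condition at $s$: $S_s^c\not\subseteq G_1$, and $G_1$ is a flat, so $\overline{S_s^c}\not\subseteq G_1$. Thus $\operatorname{rk}(A\cap G_1)\le a-1$. Induction using the strict decrease then gives $\operatorname{rk}(A\cap G_i)\le a-i$.

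\emph{Lower inequality.} From $T_i\cup G_i=[N]$ we get $T_i^c\subseteq G_i$. Also $T_i\supseteq S_s$, so $T_i^c\subseteq S_s^c\subseteq A$. Hence $T_i^c\subseteq A\cap G_i$, and monotonicity of rank gives $\operatorname{rk}(T_i^c)\le\operatorname{rk}(A\cap G_i)$.

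The main obstacle is the nonemptiness of $A\setminus G_{i+1}$ for $i<\ell$. I will verify it by noting that $G_{i+1}\subseteq G_1$ and $S_s^c\not\subseteq G_1$. This is immediate, so in fact every step reduces to the first gap index condition at $s$ together with the closure property of flats.
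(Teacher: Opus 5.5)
Your proof is correct and follows the paper's argument essentially step for step. The strict decrease comes from the lexicographic condition together with the fact that a flat strictly contained in another flat has smaller rank. The lower bound comes from $T_i^c \subseteq S_s^c \cap G_i$. The bound $a-i$ comes from $\overline{S_s^c}\not\subseteq G_1$ (the gap condition at $s$) combined with the strict decrease; the paper phrases this last step as a contradiction rather than an induction. Your check that $\overline{S_s^c}\setminus G_{i+1}$ is nonempty (via $G_{i+1}\subseteq G_1$, or $G_{\ell+1}=\emptyset$) is a detail the paper leaves implicit, and you handle it correctly.
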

\begin{proof}
 As $G_{i+1} \cup \min(\overline{S_s^c} \setminus G_{i+1}) \subseteq G_i$, it follows that $\overline{S_s^c} \cap G_{i+1} \subsetneq \overline{S_s^c} \cap G_i$. The sets $\overline{S_s^c} \cap G_{i+1}$ and $\overline{S_s^c} \cap G_i$ are flats so the first inequality now follows. The second inequality follows from the fact that $T_i^c \subseteq S_s^c \cap G_i$. 

  We now prove the third inequality. For the sake of contradiction, suppose there is some index $1\leq i\leq \ell$ where $\operatorname{rk}(\overline{S_s^c} \cap G_i)>a-i$. By the first inequality,  this implies that $\operatorname{rk}(\overline{S_s^c} \cap G_1 ) > a-1$. As the rank of $\overline{S_s^c}$ is $a$, we have that $\overline{S_s^c}\cap G_1 = \overline{S_s^c}$, so $\overline{S_s^c} \subseteq G_1$. This contradicts the fact that $s$ is a gap index of $\mathcal{S}\vert \mathcal{F} < \mathcal{T}\vert \mathcal{G}$. 
\end{proof}

\begin{remark}\label{rmk:lex_dec}
  Let $\mathcal{S}\vert \mathcal{F} < \mathcal{T}\vert \mathcal{G}$ be a lexicographically decreasing biflag whose second component has length equal to $\operatorname{rk}(\overline{S_s^c})-1=a-1$. Lemma \ref{lem:dyck} tells us that
  \[\overline{S_s^c} \cap \mathcal G^{\text{op}} = \left( \emptyset = \overline{S_s^c} \cap G_{a} \subset \overline{S_s^c} \cap G_{a-1} \subset \overline{S_s^c} \cap G_{a-2} \subset \dotsb \subset \overline{S_s^c} \cap G_1 \subset \overline{S_s^c}\right)\]
  is a saturated flag of flats.  In the interval $[\emptyset, \overline{S_s^c}]$ of the lattice of flats of $\mathrm{M}$, $\overline{S_s^c} \cap \mathcal G^{\text{op}}$ is the unique maximal flag of flats whose EL-labeling, in the sense of \cite{BjornerShellable}, is lexicographically increasing. Because $\overline{S_s^c} \cap \mathcal{G}^{\text{op}}$ is read in the opposite order as $\mathcal{T}\vert \mathcal{G}$, we've chosen to call the biflags $\mathcal{S} \vert \mathcal{F} < \mathcal{T}\vert \mathcal{G}$ ``lexicographically decreasing''.
\end{remark}

\begin{example}\label{ex:lex_dec}
    The biflag $\mathcal{S} \vert \mathcal{F} < \mathcal{T}\vert \mathcal{G}$ of Example~\ref{ex:biflag} is \emph{not} lexicographically decreasing. In particular, it fails to be lexicographically decreasing at index $1$. To see this, observe that $\overline{S_3^c}= 34578$  and $G_1=G_2=378$ so 
$\min(\overline{S_3^c}\setminus G_2) = 4 \not \in G_1$.

    In contrast, for the pyramid graph $\Gamma$ of Figure~\ref{fig:graph}, the biflag $\mathcal{S}' \vert \mathcal{F}' < \mathcal{T}'\vert \mathcal{G}'$ given by
    \[\mathcal{S}'\vert \mathcal{F}' = (1267\vert E) \quad\text{and}\quad \mathcal{T}'\vert \mathcal{G}' = (125678\vert 1234 < E\vert 3)\]
    is lexicographically decreasing. In agreement with Remark~\ref{rmk:lex_dec}, we see that $\overline{S_1'^c} \cap G_1' = 34$ and $\overline{S_1'^c}\cap G_2'=3$. Note that this condition does not determine $G_1'$ or $G_2'$. For example, replacing $G_1'$ with the flat $34$ results in a different lexicographically decreasing biflag.
\end{example}

It follows from the definitions that, in $A^1(\Sigma_{N, N})$, we have
$$\delta + u_j = \overline{\gamma}  + \sum_{S \not= [N], \, \operatorname{rk}(S^c) < j} x_{S|[N]} - \sum_{S, T \not= [N], \, \operatorname{rk}(S^c) \ge j} x_{S|T}.$$
Set $v_j^+ = \sum_{S \not= [N], \, \operatorname{rk}(S^c) < j} x_{S|[N]}$ and $v_j^- = \sum_{S, T \not= [N], \, \operatorname{rk}(S^c) \ge j} x_{S|T}$, so $\delta + u_j = \overline{\gamma} + v_j^+ - v_j^-$. 
We will now work towards proving the following proposition.
\begin{proposition}\label{thrm:min_dec}
  Let $\mathcal{S}\vert \mathcal{F} =  ( S_1|F_1 < \dotsb< S_s\vert F_s)$ be a chain of proper biflats with no gap indices strictly smaller than $s$, and let $a=\operatorname{rk}(\overline{S_s^c})$. 
   For any number $0\leq \ell \leq a$, we have that
  \[\sum_{\# \mathcal{T}\vert \mathcal{G} = \ell} x_{\mathcal{S}\vert \mathcal{F} < \mathcal{T}\vert \mathcal{G}} \prod_{i=1}^{a-\ell} \left(\overline\gamma-v_i^-\right) =0\]
    in $A(\Sigma_{N,\mathrm{M}})$ where the sum is over all lexicographically decreasing biflags whose first component is $\mathcal{S}\vert \mathcal{F}$ and whose second component has length $\ell$.
  \end{proposition}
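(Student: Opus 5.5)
We argue by descending induction on $\ell$, from $\ell=a$ down to $\ell=0$. The base case $\ell=a$ has an empty product, and we claim the sum itself is empty. Suppose a lexicographically decreasing biflag has second component of length $a$. By Lemma~\ref{lem:dyck}, $\operatorname{rk}(\overline{S_s^c}\cap G_a)\le 0$ and $\operatorname{rk}(T_a^c)\le 0$. Since $\mathrm{M}$ is loopless, this forces $T_a=[N]$, which contradicts properness of $T_a\vert G_a$. So every monomial in the sum vanishes, and the identity holds for $\ell=a$. (For $\ell$ close to $a$ the same bounds restrict which $T_i$ can occur.)

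For the inductive step, fix $\ell<a$. We peel off the factor $\overline\gamma - v_{a-\ell}^-$ and expand it against each monomial $x_{\mathcal{S}\vert\mathcal{F}<\mathcal{T}\vert\mathcal{G}}$. Two choices of representative do the work.

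\emph{Representing $\overline\gamma$.} We write $\overline\gamma=\overline\gamma_j$ with $j=\min(\overline{S_s^c}\setminus G_{\ell})$. This is exactly the element singled out by the lexicographic condition at index $\ell$. Multiply $x_{\mathcal{S}\vert\mathcal{F}<\mathcal{T}\vert\mathcal{G}}$ by $\overline\gamma_j=\sum_{j\in F}x_{S\vert F}$. Only biflats compatible with the chain survive modulo $I_{N,\mathrm{M}}$.
\begin{enumerate}[(i)]
\item Biflats inserted after $T_\ell\vert G_\ell$ with $j\in G_{\ell+1}$ produce biflags with second component of length $\ell+1$ that are lexicographically decreasing at every index.
\item Biflats inserted earlier in the chain either break the gap structure, or are cancelled.
\end{enumerate}

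\emph{Representing $v_{a-\ell}^-$.} Among the monomials of $v_{a-\ell}^-$, the surviving ones are the compatible biflats $S\vert T$ with $\operatorname{rk}(S^c)\ge a-\ell$. By Lemma~\ref{lem:dyck}, $\operatorname{rk}(T_i^c)\le a-i$. Hence such a biflat can only be placed before $T_1$, or in positions whose contributions match and cancel the non-lexicographic terms coming from $\overline\gamma_j$.

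Putting these together, the goal is a relation of the form
\[
x_{\mathcal{S}\vert\mathcal{F}<\mathcal{T}\vert\mathcal{G}}\,(\overline\gamma-v_{a-\ell}^-) \;=\; \sum x_{\mathcal{S}\vert\mathcal{F}<\mathcal{T}'\vert\mathcal{G}'} \;+\; (\text{terms killed by } I_{N,\mathrm{M}}),
\]
where the sum runs over the lexicographically decreasing extensions of length $\ell+1$. We would also need to handle terms where the new biflat creates a new first gap before $s$. The hypothesis that $\mathcal{S}\vert\mathcal{F}$ has no gap indices smaller than $s$ is what controls these: either such a term vanishes, or it is absorbed using the biflat $S_s\vert F_s$ itself. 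Summing over all $\mathcal{T}\vert\mathcal{G}$ of length $\ell$ converts the length-$\ell$ expression into the length-$(\ell+1)$ expression times $\prod_{i=1}^{a-\ell-1}(\overline\gamma-v_i^-)$. That expression vanishes by the inductive hypothesis.

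The main obstacle is the bookkeeping in the middle step. We must show that the extensions of length $\ell+1$ obtained this way are in bijection with the lexicographically decreasing ones. This is where the minimality of $j$ enters, analogous to the EL-labeling uniqueness in Remark~\ref{rmk:lex_dec}. We must also show that everything else cancels exactly, using the linear relations $\overline\gamma_i=\overline\gamma_j$ together with the monomial relations. If the cancellation does not close up directly, we would strengthen the induction to track a chosen representative $\overline\gamma_j$ for each factor, varying with the chain.
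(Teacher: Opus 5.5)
Your outline matches the paper's: peel off the factor $\overline\gamma-v_{a-\ell}^-$ and trade length-$\ell$ chains for length-$(\ell+1)$ chains. There is, however, a genuine gap in the inductive step.

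\textbf{The target relation is false.} You aim to show that $\sum x_{\mathcal{S}\vert\mathcal{F}<\mathcal{T}\vert\mathcal{G}}(\overline\gamma-v_{a-\ell}^-)$ equals the sum of $x_{\mathcal{S}\vert\mathcal{F}<\mathcal{T}'\vert\mathcal{G}'}$ over lexicographically decreasing extensions of length $\ell+1$, up to terms killed by $I_{N,\mathrm{M}}$. This fails in general. The correct identity (Proposition~\ref{prop:summary}) has an extra sum of monomials $x_{\mathcal{S}\vert\mathcal{F}<U\vert H<\mathcal{T}\vert\mathcal{G}}$, each with coefficient $+1$, where $\operatorname{rk}(U^c)<a-\ell$ and $\overline{S_s^c}\subseteq H$.
\begin{itemize}
\item Since $S_s\cup H=[N]$, the index $s$ is no longer a gap, so these are not extensions of $\mathcal{S}\vert\mathcal{F}$.
\item They are nevertheless honest biflags: the first gap has moved to $U\vert H$ or beyond. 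So $I_{N,\mathrm{M}}$ does not kill them and nothing cancels them. Their sum is a nonzero class, since it pairs positively with a suitable power of any class in $\mathcal{K}(\Sigma_{N,\mathrm{M}})$.
\item Concretely, in the pyramid example take $\mathcal{S}\vert\mathcal{F}=(1267\vert E)$, $a=3$, $\ell=0$. Then $x_{1267\vert E}(\overline\gamma-v_3^-)$ contains $x_{1267\vert E<1245678\vert 34578}$, a biflag whose first gap index is $2$.
\end{itemize}
So your worry about a new first gap \emph{before} $s$ is misplaced, since new biflats land after $S_s\vert F_s$. The real issue is that they can \emph{destroy} the gap at $s$. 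Because the identity holds in the ring, no choice of representative for $\overline\gamma$ removes these terms, so your fallback does not help.

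\textbf{The missing idea.} These leftover terms are killed by the proposition itself, applied to a \emph{different} first component. The chain $\mathcal{S}\vert\mathcal{F}<U\vert H$ has no gap index below its last position, and $a'=\operatorname{rk}(\overline{U^c})\le a-\ell-1$. Its $\ell'=0$ case gives $x_{\mathcal{S}\vert\mathcal{F}<U\vert H}\prod_{i=1}^{a'}(\overline\gamma-v_i^-)=0$, which annihilates each leftover term after multiplying by $\prod_{i=1}^{a-\ell-1}(\overline\gamma-v_i^-)$. Hence a descending induction on $\ell$ with $\mathcal{S}\vert\mathcal{F}$ fixed cannot close. You must prove the statement for all first components at once, inducting on the pair $(a,a-\ell)$:
\begin{itemize}
\item use $(a,a-\ell-1)$ for the length-$(\ell+1)$ extensions;
\item use $(a',a')$ with $a'<a$ for the leftover terms;
\item add the base case $a=0$, where $x_{\mathcal{S}\vert\mathcal{F}}=0$ because there is no gap.
\end{itemize}

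\textbf{Two smaller points.}
\begin{itemize}
\item In your base case, $T_a=[N]$ does not contradict properness, since $[N]\vert G$ is a proper biflat (e.g.\ $E\vert 3$ in the pyramid example). The contradiction instead comes from lexicographic decrease at index $a$. It puts $\min(\overline{S_s^c})$ in $G_a$, giving $\operatorname{rk}(\overline{S_s^c}\cap G_a)\ge 1$, against Lemma~\ref{lem:dyck}.
\item The uniform representative $j=\min(\overline{S_s^c}\setminus G_\ell)$ does not give clean bookkeeping. It is undefined for $\ell=0$, as $\overline{S_s^c}\subseteq F_s$. No biflat after $T_\ell\vert G_\ell$ has $j$ in its second coordinate, so your case (i) is empty. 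You also get square terms $\pm x_{T_k\vert G_k}^2$ whenever exactly one of $j\in G_k$ and $\operatorname{rk}(T_k^c)\ge a-\ell$ holds.
\end{itemize}
The paper instead takes $e=\min(\overline{S_s^c}\setminus G_i)$, where $i$ is the first index with $\operatorname{rk}(T_i^c)<a-\ell$. Then the diagonal terms of $\overline\gamma_e$ and $v_{a-\ell}^-$ cancel exactly, and every new biflat lies between $T_{i-1}\vert G_{i-1}$ and $T_i\vert G_i$. The negative terms then telescope against positive ones (Lemmas~\ref{lem:neg_containment} and~\ref{lem:total_cancel}).
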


  The $\ell=0$ case of Proposition~\ref{thrm:min_dec} is an important part of the proof of Theorem~\ref{thm:identity}. Namely, it will let us prove Proposition~\ref{prop:reductionToVs}. 
  Our proof of Proposition~\ref{thrm:min_dec} is inductive and relies on an explicit understanding of the product $\sum x_{\mathcal{S}\vert \mathcal{F} < \mathcal{T}\vert \mathcal{G}} (\overline\gamma -v_{a-\ell}^-)$ in $A(\Sigma_{N,\mathrm{M}})$. Similar to the intricate computations of \cite{ADHCombo}, the fundamental hurdle is in understanding which choice of representative $\overline\gamma_i= \overline\gamma$  to use at each step.

  \begin{definition}\label{def:canonical}
    Suppose $\mathcal{S}\vert \mathcal{F}< \mathcal{T}\vert \mathcal{G}$ is a lexicographically decreasing biflag whose first and second components have lengths $s$ and $\ell$, respectively. Let $\operatorname{rk}(S_s^c)=a$, and let $1\leq i \leq \ell+1$ be the smallest index such that $\operatorname{rk}(T_i^c)<a-\ell$. Finally, let
    \[e = \min(\overline{S_s^c}\setminus G_i).\]
    The \emph{canonical expansion} of $x_{\mathcal{S}\vert \mathcal{F}< \mathcal{T}\vert \mathcal{G}}(\overline\gamma -v_{a-\ell}^-)$ is the expression
      \[x_{\mathcal{S}\vert \mathcal{F}< \mathcal{T}\vert \mathcal{G}}(\overline\gamma -v_{a-\ell}^-) = x_{\mathcal{S}\vert \mathcal{F}< \mathcal{T}\vert \mathcal{G}}(\overline\gamma_e -v_{a-\ell}^-). \]
    \end{definition}
Note that we allow the index $i=\ell+1$ in Definition \ref{def:canonical}. As $T_{\ell+1}^c=\emptyset$, this ensures that the canonical expansion is well defined. Using the relations of $A(\Sigma_{N,\mathrm{M}})$, we can rewrite the canonical expansion of $x_{\mathcal{S}\vert \mathcal{F}< \mathcal{T}\vert \mathcal{G}}(\overline\gamma -v_{a-\ell}^-)$ as a sum of squarefree monomials whose coefficients are $0$, $1$ or $-1$.
    
    \begin{lemma}\label{lem:canonical}
      In $A(\Sigma_{N,\mathrm{M}})$, the canonical expansion is equal to
\begin{equation}\label{eq:canonical}
        x_{\mathcal{S}\vert \mathcal{F}< \mathcal{T}\vert \mathcal{G}}(\overline\gamma_e -v_{a-\ell}^-)  = \sum_{\substack{\operatorname{rk}(U^c)<a-\ell,\\ e\in H \neq [N]}}x_{\mathcal{S}\vert \mathcal{F}< \mathcal{T}\vert \mathcal{G}}x_{U\vert H}
    - \sum_{\substack{\operatorname{rk}(U^c)\geq a-\ell,\\ e\not\in H}}x_{\mathcal{S}\vert \mathcal{F}< \mathcal{T}\vert \mathcal{G}}x_{U\vert H}\,.
  \end{equation}
  
In both sums, the nonzero terms correspond to biflats $U\vert H$ with 
$T_{i-1}\vert G_{i-1} < U\vert H < T_i\vert G_i$.
    \end{lemma}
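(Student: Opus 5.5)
Expanding $\overline\gamma_e$ and $v_{a-\ell}^-$ as linear forms in the $x_{U\vert H}$ gives
\[
\overline\gamma_e - v_{a-\ell}^- = \sum_{e\in H\neq[N]} x_{U\vert H} - \sum_{\substack{U,H\neq [N],\\ \operatorname{rk}(U^c)\ge a-\ell}} x_{U\vert H}.
\]
The first step is to regroup this. A biflat $U\vert H$ with $e\in H\neq[N]$ and $\operatorname{rk}(U^c)\ge a-\ell$ occurs in both sums and cancels. What remains is
\[
\sum_{\substack{\operatorname{rk}(U^c)<a-\ell,\\ e\in H\neq[N]}} x_{U\vert H} \;-\; \sum_{\substack{\operatorname{rk}(U^c)\ge a-\ell,\\ e\notin H}} x_{U\vert H}.
\]
In the second sum, $e\notin H$ forces $H\neq[N]$, and $\operatorname{rk}(U^c)\ge a-\ell\ge 1$ forces $U\neq[N]$ (I will check separately the edge case $a-\ell\le 0$). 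Multiplying by $x_{\mathcal{S}\vert\mathcal{F}<\mathcal{T}\vert\mathcal{G}}$ gives \eqref{eq:canonical}, so the displayed identity is just bookkeeping.

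The real content is the support statement. In $A(\Sigma_{N,\mathrm{M}})$, a product $x_{\mathcal{S}\vert\mathcal{F}<\mathcal{T}\vert\mathcal{G}}\,x_{U\vert H}$ vanishes unless $U\vert H$ is comparable to every element of the biflag and the enlarged chain still has a gap index. Terms with $U\vert H$ already in the biflag would be squares, not squarefree, so I first show these do not occur in either sum. So fix a nonzero term with $U\vert H$ inserted at some position, $T_{j-1}\vert G_{j-1}<U\vert H<T_j\vert G_j$ (or inside the first component). I need to show the position is exactly $j=i$.

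For terms in the first sum ($\operatorname{rk}(U^c)<a-\ell$, $e\in H$), the argument runs as follows.
\begin{itemize}
\item If $U\vert H$ were inserted inside or at the end of the first component, then $U\subseteq S_s$. Hence $\operatorname{rk}(U^c)\ge\operatorname{rk}(S_s^c)=a>a-\ell$, a contradiction.
\item If $j>i$, then $H\subseteq G_{j-1}\subseteq G_i$, so $e\in G_i$. This contradicts $e\notin G_i$.
\item If $j<i$, then $T_{j}\subseteq U$, so $\operatorname{rk}(U^c)\le \operatorname{rk}(T_j^c)$. By minimality of $i$, $\operatorname{rk}(T_j^c)\ge a-\ell$ for $j<i$, which contradicts $\operatorname{rk}(U^c)<a-\ell$.
\end{itemize}
The boundary case $j=i$ with $i$ meaning ``before $T_i$'' is exactly the claim.

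For terms in the second sum ($\operatorname{rk}(U^c)\ge a-\ell$, $e\notin H$), the argument is the reverse.
\begin{itemize}
\item If $j>i$, then $U\supseteq T_i$, so $\operatorname{rk}(U^c)\le\operatorname{rk}(T_i^c)<a-\ell$, a contradiction.
\item If $j\le i-1$, then $H\supseteq G_{j}\supseteq G_{i-1}$. I must show $e\in G_{i-1}$; this is where lexicographic decrease at $i-1$ enters. By definition, $\min(\overline{S_s^c}\setminus G_i)\in G_{i-1}$, so $e\in G_{i-1}\subseteq H$, which is a contradiction.
\item If $U\vert H$ sits in the first component, then $H\supseteq F_s\supseteq \overline{S_s^c}$. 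Here $F_s\supseteq\overline{S_s^c}$ because $S_s\cup F_s=[N]$ and $F_s$ is a flat. Also $e\in\overline{S_s^c}$, so again $e\in H$, a contradiction.
\end{itemize}
When $i=1$, the $j=0$ placement (directly after $S_s\vert F_s$) is allowed, which matches $T_0\vert G_0=S_s\vert F_s$.

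The main obstacle I expect is the careful treatment of edge cases, together with confirming that the two sums contain no terms equal to elements already in the biflag. Equal elements would contribute $x_{U\vert H}^2$ and would have to be ruled out by the same rank and membership conditions. For example, $U\vert H=T_i\vert G_i$ in the second sum is excluded by $\operatorname{rk}(T_i^c)<a-\ell$. Conversely, $U\vert H=T_{i-1}\vert G_{i-1}$ in the first sum is excluded because either $\operatorname{rk}(T_{i-1}^c)\ge a-\ell$, or $i-1=0$ and then $\operatorname{rk}(S_s^c)=a\ge a-\ell$. A further subtlety is whether the first gap index changes after insertion; this only affects notation, since nonvanishing merely requires some gap index, and I will not need it for the support statement.
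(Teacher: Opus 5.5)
Your proposal follows the paper's proof. The identity is pure bookkeeping: the terms with $e\in H$ and $\operatorname{rk}(U^c)\ge a-\ell$ cancel. The support claim comes from requiring $U\vert H$ to be comparable with every element of the biflag, and then using minimality of $i$, $e\notin G_i$, $\overline{S_s^c}\subseteq F_s$, and lexicographic decrease at $i-1$. The only difference is organisational. The paper compares $U\vert H$ directly with its two would-be neighbours $T_{i-1}\vert G_{i-1}$ and $T_i\vert G_i$, which rules out every other position, and equality with an element of the biflag, in one stroke. You instead case on the insertion position.

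There are a few things to fix.
\begin{itemize}
\item \emph{Reversed inclusion.} In the $j<i$ case for the first sum, placing $U\vert H$ below $T_j\vert G_j$ gives $U\subseteq T_j$, not $T_j\subseteq U$. Hence $\operatorname{rk}(U^c)\ge\operatorname{rk}(T_j^c)\ge a-\ell$, which is the contradiction you want. As you wrote it, $\operatorname{rk}(U^c)\le\operatorname{rk}(T_j^c)$ contradicts nothing.
\item \emph{First-component case.} Since $\ell$ may be $0$, you only have $a\ge a-\ell$, not $a>a-\ell$. This is still enough to contradict $\operatorname{rk}(U^c)<a-\ell$.
\item \emph{Edge case.} The case $a-\ell\le 0$ that you deferred never occurs for a lexicographically decreasing biflag. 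Indeed $a\ge 1$, and Lemma~\ref{lem:dyck} gives $\ell\le a-1$.
\end{itemize}
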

    \begin{proof}
      Equation~(\ref{eq:canonical}) follows directly from the definitions. We now check the second claim. First, let $U\vert H$ correspond to a nonzero term of the first sum. As $\operatorname{rk}(U^c)<a-\ell$, we have that $T_{i-1} \subsetneq U$, so $T_{i-1}\vert G_{i-1} < U\vert H$. As $e\in H$ and $e\not\in G_i$, we have that $H\supsetneq G_i$, so $U\vert H < T_i\vert G_i$. 

      To see that the claim holds for the second sum, suppose that $U\vert H$ corresponds to one of its nonzero terms. As ${\mathcal{S}\vert \mathcal{F}< \mathcal{T}\vert \mathcal{G}}$ is lexicographically decreasing, $e\in G_{i-1}$. This implies that $G_{i-1}\supsetneq H$, so  $T_{i-1}\vert G_{i-1} < U\vert H$. As $\operatorname{rk}(U^c)\geq a-\ell$, we  have $U\subsetneq T_i$ and so $U\vert H < T_i\vert G_i$.      
    \end{proof}

    \begin{example}\label{ex:canonical}
    For the pyramid graph $\Gamma$ of Figure~\ref{fig:graph}, let $\mathcal{S}\vert \mathcal{F} < \mathcal{T} \vert \mathcal{G}$ be the length two biflag given by $\mathcal{S}\vert \mathcal{F} = (1267\vert E)$ and $\mathcal{T}\vert \mathcal{G} = (E\vert 3)$. Following Definition~\ref{def:canonical}, we have that $e= \min(\overline{S_1^c}\setminus G_1)= \min(34578\setminus 3)=4$ and the canonical expansion of $x_{\mathcal{S}\vert \mathcal{F}< \mathcal{T}\vert \mathcal{G}}(\overline\gamma -v_{2}^-)$ is $x_{\mathcal{S}\vert \mathcal{F}< \mathcal{T}\vert \mathcal{G}}(\overline\gamma -v_{2}^-)=x_{\mathcal{S}\vert \mathcal{F}< \mathcal{T}\vert \mathcal{G}}(\overline\gamma_4 -v_{2}^-)$. The biflags
    \[1267\vert E < E \vert 34 < E \vert 3 \quad\text{and}\quad 1267\vert E < 124567\vert 378 < E\vert 3 \]
    correspond to positive and negative terms of Equation~(\ref{eq:canonical}), respectively.
    \end{example}

    Given a lexicographically decreasing biflag $\mathcal{S}\vert \mathcal{F}< \mathcal{T}\vert \mathcal{G}$, let $\operatorname{pos}(\mathcal{S}\vert \mathcal{F}< \mathcal{T}\vert \mathcal{G})$ be the set of biflags that correspond to the nonzero monomials in the positive sum of Equation~(\ref{eq:canonical}). Similarly, let $\operatorname{neg}(\mathcal{S}\vert \mathcal{F}< \mathcal{T}\vert \mathcal{G})$ be the set of biflags that correspond to the nonzero monomials in the negative sum of Equation~(\ref{eq:canonical}).

    \begin{lemma}\label{lem:disjoint}
      Let $\mathcal{S}\vert \mathcal{F}< \mathcal{T}\vert \mathcal{G}$ and $\mathcal{S}\vert \mathcal{F}< \mathcal{T}'\vert \mathcal{G}'$ be different lexicographically decreasing biflags whose second components have length $\ell$ and whose first components are equal. The set $\operatorname{pos}(\mathcal{S}\vert \mathcal{F}< \mathcal{T}\vert \mathcal{G})$ is disjoint from $\operatorname{pos}(\mathcal{S}\vert \mathcal{F}< \mathcal{T}'\vert \mathcal{G}')$ and the set $\operatorname{neg}(\mathcal{S}\vert \mathcal{F}< \mathcal{T}\vert \mathcal{G})$ is disjoint from $\operatorname{neg}(\mathcal{S}\vert \mathcal{F}< \mathcal{T}'\vert \mathcal{G}')$.
    \end{lemma}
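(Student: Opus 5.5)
Since $\mathcal{S}\vert \mathcal{F}< \mathcal{T}\vert \mathcal{G}$ is determined by its chain of biflats, the plan is to show that any biflag in $\operatorname{pos}$ or $\operatorname{neg}$ of $\mathcal{S}\vert \mathcal{F}< \mathcal{T}\vert \mathcal{G}$ determines $\mathcal{T}\vert\mathcal{G}$. By Lemma~\ref{lem:canonical}, such a biflag is obtained by inserting one biflat $U\vert H$ into the second component, between $T_{i-1}\vert G_{i-1}$ and $T_i\vert G_i$. Here $i$ is the smallest index with $\operatorname{rk}(T_i^c)<a-\ell$. So it suffices to show that, inside a biflag in $\operatorname{pos}$ (respectively $\operatorname{neg}$), we can identify which biflat is the inserted one. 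Deleting it then recovers $\mathcal{T}\vert\mathcal{G}$, and hence two different $\mathcal{T}\vert\mathcal{G}$, $\mathcal{T}'\vert\mathcal{G}'$ cannot produce the same element.

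\textbf{The $\operatorname{pos}$ case.} Let the resulting second component be $(V_1\vert K_1<\dotsb<V_{\ell+1}\vert K_{\ell+1})$, so that $V_i\vert K_i=U\vert H$. I claim $i$ is the smallest index $j$ with $\operatorname{rk}(V_j^c)<a-\ell$.
\begin{itemize}
\item For $j<i$ we have $V_j=T_j$, and $\operatorname{rk}(T_j^c)\ge a-\ell$ by minimality of $i$.
\item At position $i$ we have $\operatorname{rk}(U^c)<a-\ell$ by the summation condition in \eqref{eq:canonical}.
\end{itemize}
So the inserted biflat is the first biflat in the second component whose complement of the $S$-part has rank $<a-\ell$. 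This location depends only on the new chain, and deleting it is well defined. One point needs care: the first gap index must still be $s$, so that the first and second components of the new biflag are read off correctly. This holds because the inserted biflat sits after $S_s\vert F_s$ and the gap at $s$ persists, since $T_0\vert G_0=S_s\vert F_s$ and $U\supsetneq T_{i-1}$ only matters when $i\ge 2$; I would check this directly.

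\textbf{The $\operatorname{neg}$ case.} Here $\operatorname{rk}(U^c)\ge a-\ell$, so the biflats at positions $1,\dots,i$ of the new second component all satisfy $\operatorname{rk}(V_j^c)\ge a-\ell$. The first index with rank $<a-\ell$ is therefore $i+1$, and the inserted biflat sits immediately before it, at position $i$. Again this is intrinsic to the new chain, so deletion recovers $\mathcal{T}\vert\mathcal{G}$.

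The same gap-index check is needed here. Since $e\notin H$ and $e\in\overline{S_s^c}$, and since $s$ was a gap index with $\overline{S_s^c}\not\subseteq G_1$, inserting $U\vert H$ does not create an earlier gap. Earlier gaps are governed by $\mathcal{S}\vert\mathcal{F}$ alone, because the condition at index $j<s$ involves only $S_j$ and $F_{j+1}$, and these biflats are unchanged.

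\textbf{Main obstacle.} The one subtle point is confirming that the first component of the new biflag is still $\mathcal{S}\vert\mathcal{F}$, so that "position within the second component" is intrinsic. Everything else is bookkeeping with the rank thresholds.
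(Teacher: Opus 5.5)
Your core argument is correct and is essentially the paper's proof. In an element of $\operatorname{pos}$, the inserted $U\vert H$ is the first biflat after $S_s\vert F_s$ with $\operatorname{rk}(U^c)<a-\ell$. In an element of $\operatorname{neg}$, it is the biflat immediately preceding the first such biflat. Deleting it recovers $\mathcal{T}\vert\mathcal{G}$. The paper phrases the same observation as a case split: if $i\neq i'$, the two families are separated by where the rank first drops below $a-\ell$; if $i=i'$, a discrepancy at index $j$ persists at index $j$ or $j+1$.

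However, the step you flag as the main obstacle is unnecessary, and in the $\operatorname{pos}$ case your resolution of it is false.

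\emph{Why it is false.} When $i=1$, the biflat $U\vert H$ is inserted directly after $S_s\vert F_s$, and nothing prevents $\overline{S_s^c}\subseteq H$. In that case $S_s\cup H=[N]$, so $s$ is no longer a gap index. For instance, take the Boolean matroid on $[4]$ with $\mathcal{S}\vert\mathcal{F}=(4\vert 1234)$ and $\mathcal{T}\vert\mathcal{G}=(1234\vert 1)$, so $a=3$, $\ell=1$, $i=1$, $e=2$. The biflag $4\vert 1234<124\vert 123<1234\vert 1$ lies in $\operatorname{pos}$, but its first gap index is $2$, not $1$. These are exactly the biflags in $\mathbf{B}$, which the paper later handles by regarding $\mathcal{S}\vert\mathcal{F}<U\vert H$ as a new first component. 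Your $\operatorname{neg}$-case check is fine, since $e\in\overline{S_s^c}\setminus H$ and $H$ is a flat.

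\emph{The fix.} Drop the gap-index discussion entirely. Both biflags being compared share the prefix $\mathcal{S}\vert\mathcal{F}$ of length $s$. The inserted biflat always lies strictly above $T_{i-1}\vert G_{i-1}\geq S_s\vert F_s$. Hence in every element of $\operatorname{pos}$ or $\operatorname{neg}$, the first $s$ biflats of the chain are $S_1\vert F_1,\dots,S_s\vert F_s$. Apply your rank-threshold argument to the biflats in positions $s+1,\dots,s+\ell+1$ of the chain. You never need to know where the new biflag's first gap lies, and with this change the proof is complete.
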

    \begin{proof}
      Let $i$ and $i'$ be the first indices such that $\operatorname{rk}(T_i^c)<a-\ell$ and $\operatorname{rk}(T_{i'}'^c)<a-\ell$. Lemma \ref{lem:canonical} ensures that our claims hold if $i\neq i'$. If $i=i'$, then there is some index $j$ such that $T_j\vert G_j \neq T_j'\vert G_j'$. If $j<i$, then any biflags of $\operatorname{pos}(\mathcal{S}\vert \mathcal{F}< \mathcal{T}\vert \mathcal{G})$ and $\operatorname{pos}(\mathcal{S}\vert \mathcal{F}< \mathcal{T}'\vert \mathcal{G}')$ will still differ from each other at index $j$. If $j\geq i$, then the same is true but now at index $j+1$. This argument also shows that $\operatorname{neg}(\mathcal{S}\vert \mathcal{F}< \mathcal{T}\vert \mathcal{G})$ is disjoint from $\operatorname{neg}(\mathcal{S}\vert \mathcal{F}< \mathcal{T}'\vert \mathcal{G}')$.
\end{proof}

Let $\mathbf{A}$ denote the set of all lexicographically decreasing biflags whose first component is $\mathcal{S}\vert \mathcal{F}$ and whose second component has length $\ell$. We now consider the result of applying the canonical expansion to every term in the sum
\[\sum_{{\mathcal{S}\vert \mathcal{F} < \mathcal{T}\vert \mathcal{G}}\in \mathbf{A}} x_{\mathcal{S}\vert \mathcal{F} < \mathcal{T}\vert \mathcal{G}} (\overline\gamma -v_{a-\ell}^-)\,. \]
Define
\begin{gather*}
\operatorname{pos}(\mathbf{A}) = \bigcup_{\mathcal{S}\vert \mathcal{F} < \mathcal{T}\vert \mathcal{G}\in \mathbf{A}} \operatorname{pos}(\mathcal{S}\vert \mathcal{F} < \mathcal{T}\vert \mathcal{G}),\quad \text{and} \quad
\operatorname{neg}(\mathbf{A}) = \bigcup_{\mathcal{S}\vert \mathcal{F} < \mathcal{T}\vert \mathcal{G}\in \mathbf{A}} \operatorname{neg}(\mathcal{S}\vert \mathcal{F} < \mathcal{T}\vert \mathcal{G}).
\end{gather*}

Lemma \ref{lem:disjoint} ensures that each biflag of $\operatorname{pos}(\mathbf{A})$ can be written uniquely as a union $\mathcal{S}\vert \mathcal{F} < \mathcal{T}\vert \mathcal{G} \cup \{U\vert H\}$, where $\mathcal{S}\vert \mathcal{F} < \mathcal{T}\vert \mathcal{G}$ is a lexicographically decreasing biflag in $\mathbf{A}$. The same holds true for biflags in $\operatorname{neg}(\mathbf{A})$. Lemma \ref{lem:disjoint} also tells us that
  \begin{align}
    \sum_{\mathcal{S}\vert \mathcal{F} < \mathcal{T}\vert \mathcal{G}\in \mathbf{A}} x_{\mathcal{S}\vert \mathcal{F} < \mathcal{T}\vert \mathcal{G}} (\overline\gamma -v_{a-\ell}^-) &=\sum_{\mathcal{S}\vert \mathcal{F} < \mathcal{T}\vert \mathcal{G}\cup \{U\vert H\}\in \operatorname{pos}(\mathbf{A})} x_{\mathcal{S}\vert \mathcal{F} < \mathcal{T}\vert \mathcal{G}\cup \{U\vert H\}} \label{eq:pos}\\
    &-\sum_{{\mathcal{S}\vert \mathcal{F} < \mathcal{T}\vert \mathcal{G}\cup \{U\vert H\}}\in \operatorname{neg}(\mathbf{A})} x_{\mathcal{S}\vert \mathcal{F} < \mathcal{T}\vert \mathcal{G}\cup \{U\vert H\}} \label{eq:neg}
  \end{align}
  This equality is already a significant reduction in complexity. For different choices of representatives $\overline{\gamma}_i=\overline\gamma$, we are often left with an expression that is not the sum of squarefree monomials and whose coefficients are not equal to $0$, $1$ or $-1$. We'll now analyze the sets $\operatorname{pos}(\mathbf{A})$ and $\operatorname{neg}(\mathbf{A})$. The result of this analysis is Proposition \ref{prop:summary}, which cancels the negative terms of Equation~(\ref{eq:neg}) with a subset of the positive terms on the right hand side of Equation~(\ref{eq:pos}).

  For an integer $1\leq j \leq \ell+1$, let $\mathbf{A}_j$ be the subset of $\mathbf{A}$ consisting of the biflags $\mathcal{S}\vert \mathcal{F} < \mathcal{T}\vert \mathcal{G}$ where $j$ is the first index such that $\operatorname{rk}(T_j^c)< a-\ell$. As $\operatorname{rk}(T_{\ell+1}^c)=0$, the collection $\{\mathbf{A}_j:1\leq j \leq \ell+1\}$ partitions $\mathbf{A}$. Define $\operatorname{pos}(\mathbf{A}_j)$ and $\operatorname{neg}(\mathbf{A}_j)$ in a similar manner to $\operatorname{pos}(\mathbf{A})$ and $\operatorname{neg}(\mathbf{A})$. We now prove two technical lemmas to show that $\operatorname{neg}(\mathbf{A}) \subseteq \operatorname{pos}(\mathbf{A})$ and  describe the complement $ \operatorname{pos}(\mathbf{A}) \setminus \operatorname{neg}(\mathbf{A})$. See Figure~\ref{fig:in_total} for a diagrammatic summary.

  \begin{lemma}\label{lem:empty}
    The set $\operatorname{neg}(\mathbf{A}_{\ell+1})$ is empty.
  \end{lemma}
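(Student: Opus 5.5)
We have to show that, for every biflag $\mathcal{S}\vert \mathcal{F} < \mathcal{T}\vert \mathcal{G}$ in $\mathbf{A}_{\ell+1}$, the negative sum in Equation~(\ref{eq:canonical}) has no nonzero terms. The approach is to show that any biflat $U\vert H$ that could appear there contradicts the definition of $\mathbf{A}_{\ell+1}$.

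Fix such a biflag. Membership in $\mathbf{A}_{\ell+1}$ means $i=\ell+1$ is the first index with $\operatorname{rk}(T_i^c)<a-\ell$. Hence $\operatorname{rk}(T_\ell^c)\ge a-\ell$ whenever $\ell\ge 1$. On the other hand, Lemma~\ref{lem:dyck} gives $\operatorname{rk}(T_\ell^c)\le \operatorname{rk}(\overline{S_s^c}\cap G_\ell)\le a-\ell$. Therefore $\operatorname{rk}(T_\ell^c)=a-\ell$. If $\ell=0$, then $T_0\vert G_0=S_s\vert F_s$, and we have $\operatorname{rk}(S_s^c)\le a$.

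Now suppose $U\vert H$ gives a nonzero term of the negative sum. By Lemma~\ref{lem:canonical}, $T_\ell\vert G_\ell < U\vert H < T_{\ell+1}\vert G_{\ell+1}=[N]\vert\emptyset$, and in particular $T_\ell\subseteq U$. Since $T_\ell\vert G_\ell$ and $U\vert H$ are distinct and comparable, either $T_\ell\subsetneq U$ or $H\subsetneq G_\ell$.

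In the first case, $U^c\subsetneq T_\ell^c$. The negative sum requires $\operatorname{rk}(U^c)\ge a-\ell\ge \operatorname{rk}(T_\ell^c)$, so by monotonicity $\operatorname{rk}(U^c)=\operatorname{rk}(T_\ell^c)$. This equality alone is not a contradiction (ranks of non-flats can agree), so the argument must use more structure. The extra input is the gap condition at the last position.

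The negative sum also requires $e\notin H$. For a nonzero term, the extended chain must remain a biflag of $\mathrm{M}$. The gap structure of the extension is the key point. Since $s$ is the first gap index of the original biflag and the new chain has the same first component, inserting $U\vert H$ after $T_\ell$ must not create a gap before $s$. For the chain to be a biflag we need $U\cup H$ to behave compatibly with the neighbours; the relevant condition is $U\cup\emptyset=U\neq[N]$ or a gap elsewhere.

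The plan is to combine the following facts:
\begin{itemize}
\item $U^c\neq\emptyset$ (since $U\vert H$ is proper and $U\neq [N]$);
\item $U^c\subseteq T_\ell^c\subseteq S_s^c$;
\item $\operatorname{rk}(U^c)\ge a-\ell$, together with $U\cup H=[N]$, which gives $U^c\subseteq H$;
\item $e\notin H$.
\end{itemize}
From $U^c\subseteq H$ and $H$ being a flat, $\overline{U^c}\subseteq H$. It therefore suffices to show $e\in \overline{U^c}$. Here $e=\min(\overline{S_s^c}\setminus G_{\ell+1})=\min \overline{S_s^c}$, since $G_{\ell+1}=\emptyset$.

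For $\ell\ge1$, lexicographic decrease at index $\ell$ gives $e\in G_\ell$. Applying Lemma~\ref{lem:dyck} with the equality $\operatorname{rk}(T_\ell^c)=\operatorname{rk}(\overline{S_s^c}\cap G_\ell)=a-\ell$ yields $\overline{T_\ell^c}=\overline{S_s^c}\cap G_\ell\ni e$. Then $\overline{U^c}$ is a flat of rank at least $a-\ell$ contained in $\overline{T_\ell^c}$, which has rank $a-\ell$, so $\overline{U^c}=\overline{T_\ell^c}\ni e$. Hence $e\in H$, a contradiction.

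For $\ell=0$, we have $U^c\subseteq S_s^c$ and $\operatorname{rk}(U^c)\ge a=\operatorname{rk}(\overline{S_s^c})$. This forces $\overline{U^c}=\overline{S_s^c}\ni e$, again a contradiction.

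The main obstacle is the identification $\overline{T_\ell^c}=\overline{S_s^c}\cap G_\ell$. It needs the inclusion $T_\ell^c\subseteq S_s^c\cap G_\ell$ (from $T_\ell\cup G_\ell=[N]$ and $S_s\subseteq T_\ell$) together with the rank equality obtained above.
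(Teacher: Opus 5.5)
Your argument is correct and is essentially the paper's proof. You show $\overline{U^c}=\overline{T_\ell^c}=\overline{S_s^c}\cap G_\ell$, which contains $e=\min(\overline{S_s^c})$ by lexicographic decrease at index $\ell$; this contradicts $\overline{U^c}\subseteq H$ and $e\notin H$. Your separate treatment of $\ell=0$ spells out a case the paper leaves implicit, while the exploratory paragraphs about the case split ($T_\ell\subsetneq U$ versus $H\subsetneq G_\ell$) and about gap structure play no role and should be cut.
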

  \begin{proof}
 Suppose $\mathcal{S}\vert\mathcal{F} <\mathcal{T}\vert \mathcal{G}< U\vert H$ is a biflag of $\operatorname{neg}(\mathbf{A}_{\ell+1})$. Lemma \ref{lem:dyck} implies that $\operatorname{rk}(T_\ell^c)=\operatorname{rk}(\overline{S_s^c}\cap G_\ell)= a-\ell$ and $\overline{T_\ell^c} = \overline{S_s^c} \cap G_\ell$. Lemma \ref{lem:canonical} tells us that the rank of $U^c$ is at least $a-\ell$, so we also have that $\overline{U^c}=\overline{T_\ell^c}$. Again by Lemma \ref{lem:canonical}, we know that $\min(\overline{S_s^c})\not\in H$. Because $\mathcal{S}\vert\mathcal{F} <\mathcal{T}\vert \mathcal{G}$ is lexicographically decreasing, $\min(\overline{S_s^c})\in G_\ell$. Thus $\overline{S_s^c}\cap H \subsetneq \overline{S_s^c} \cap G_\ell$. Putting this all together, we get
    \[\overline{S_s^c}\cap H \subsetneq \overline{S_s^c}\cap G_\ell = \overline{T_\ell^c} = \overline{U^c}. \]
But $\overline{U^c}\subseteq \overline{S_s^c}$ so this implies that $\overline{U^c}\not\subseteq H$, which is a contradiction.
  \end{proof}
  \begin{lemma}\label{lem:neg_containment}
     For any $1\leq j \leq \ell$, $\operatorname{neg}(\mathbf{A}_{j})\subseteq \operatorname{pos}(\mathbf{A}_{j+1})$. Furthermore, the difference $\operatorname{pos}(\mathbf{A}_{j+1})\setminus \operatorname{neg}(\mathbf{A}_{j})$ is equal to the set of all lexicographically decreasing biflags
    \[\mathcal{S}\vert \mathcal{F} < T_1 \vert G_1< \dotsb <T_{j}\vert G_{j} < U\vert H < T_{j+1} \vert G_{j+1} <\dotsb < T_{\ell}\vert G_\ell\]
    whose second component has length $\ell+1$ and which obey the condition that $U\vert H$ is the first biflat such that $\operatorname{rk}(U^c)< a-\ell$.
  \end{lemma}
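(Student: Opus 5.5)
The plan is to build an explicit bijection by moving a single biflat, and then to identify the leftover terms. First I unpack the definitions. Take $\mathcal{S}\vert\mathcal{F}<\mathcal{T}\vert\mathcal{G}\in\mathbf{A}_j$ and write $e=\min(\overline{S_s^c}\setminus G_j)$. By Lemma~\ref{lem:canonical}, an element of $\operatorname{neg}$ of this biflag is obtained by inserting a biflat $U\vert H$ with
\[
T_{j-1}\vert G_{j-1}<U\vert H<T_j\vert G_j,\qquad \operatorname{rk}(U^c)\ge a-\ell,\qquad e\notin H .
\]
The resulting chain of length $\ell+1$ is
\[
\mathcal{S}\vert\mathcal{F}<T_1\vert G_1<\dotsb<T_{j-1}\vert G_{j-1}<U\vert H<T_j\vert G_j<\dotsb<T_\ell\vert G_\ell .
\]
I would read this same chain as an element of $\operatorname{pos}(\mathbf{A}_{j+1})$. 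The underlying biflag is
\[
\mathcal{B}'=(\mathcal{S}\vert\mathcal{F}<T_1\vert G_1<\dotsb<T_{j-1}\vert G_{j-1}<U\vert H<T_{j+1}\vert G_{j+1}<\dotsb<T_\ell\vert G_\ell),
\]
which is $T_j\vert G_j$ removed and $U\vert H$ put in its place, and the inserted biflat is $T_j\vert G_j$.

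\textbf{Step 1: $\mathcal{B}'\in\mathbf{A}_{j+1}$.}
\begin{itemize}
\item $\mathcal{B}'$ is a biflag with first component $\mathcal{S}\vert\mathcal{F}$ and first gap index $s$. Only the second component changed, and containments are preserved.
\item Its first index with $\operatorname{rk}<a-\ell$ is $j+1$. We have $\operatorname{rk}(T_i^c)\ge a-\ell$ for $i<j$ and $\operatorname{rk}(U^c)\ge a-\ell$. Also $T_{j+1}\supseteq T_j$, so $\operatorname{rk}(T_{j+1}^c)\le\operatorname{rk}(T_j^c)<a-\ell$.
\item Lexicographic decrease at index $j$, where $U\vert H$ is followed by $G_{j+1}$: set $e'=\min(\overline{S_s^c}\setminus G_{j+1})$. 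Then $e'\in G_j\subseteq H$, using that the original biflag is lexicographically decreasing at $j$.
\item Lexicographic decrease at index $j-1$, the crucial step. Since $H\supsetneq G_j$, we have $\overline{S_s^c}\setminus H\subseteq \overline{S_s^c}\setminus G_j$, and $e$ lies in the smaller set. Hence $\min(\overline{S_s^c}\setminus H)=e$, and $e\in G_{j-1}$ by lexicographic decrease of the original biflag at $j-1$.
\item All other indices are unchanged.
\end{itemize}

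\textbf{Step 2: the inserted biflat is a positive term.} The canonical element for $\mathcal{B}'$ is exactly $e'$. The biflat $T_j\vert G_j$ satisfies $\operatorname{rk}(T_j^c)<a-\ell$, $e'\in G_j$, and $G_j\neq[N]$, since $G_j$ is a proper flat because $j\le\ell$. So it is a positive term of Equation~(\ref{eq:canonical}) for $\mathcal{B}'$. This proves $\operatorname{neg}(\mathbf{A}_j)\subseteq\operatorname{pos}(\mathbf{A}_{j+1})$. Lemma~\ref{lem:disjoint} ensures that no chain is counted twice.

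\textbf{Step 3: reversing the construction.} Take $\mathcal{B}'=(\dotsb<T'_j\vert G'_j<T'_{j+1}\vert G'_{j+1}<\dotsb)\in\mathbf{A}_{j+1}$ with inserted biflat $V\vert K$. So $T'_j\vert G'_j<V\vert K<T'_{j+1}\vert G'_{j+1}$, $\operatorname{rk}(V^c)<a-\ell$, and $e'\in K$. Put $f=\min(\overline{S_s^c}\setminus K)$. I expect $f$ to exist; if it does not, the full chain below is automatically lexicographically decreasing at $j$.

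The dichotomy is whether $f\in G'_j$.
\begin{itemize}
\item If $f\notin G'_j$: remove $T'_j\vert G'_j$ to obtain a biflag in $\mathbf{A}_j$. Its canonical element is $f$, and $T'_j\vert G'_j$ is then a negative term for it. This uses $\operatorname{rk}(T_j'^c)\ge a-\ell$ and the interval conditions.
\item I still need lexicographic decrease at $j-1$ for this reduced chain, namely $f\in G'_{j-1}$. I would try to deduce it from $\min(\overline{S_s^c}\setminus G'_j)\in G'_{j-1}$: since $G'_j\subseteq K$ and $f\notin G'_j$, one gets $\min(\overline{S_s^c}\setminus G'_j)\le f$. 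Pinning this down, possibly via Lemma~\ref{lem:dyck}, is where I expect the main obstacle.
\item If $f\in G'_j$: the full chain of length $\ell+1$ is lexicographically decreasing at $j$. It is also lexicographically decreasing at $j+1$, because $e'\in K$, and at all other indices, which are inherited. Its first biflat with $\operatorname{rk}<a-\ell$ is $V\vert K$.
\end{itemize}

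Conversely, each chain in the described set arises this way from a unique $\mathcal{B}'$, obtained by deleting $V\vert K$'s successor role. This identifies $\operatorname{pos}(\mathbf{A}_{j+1})\setminus\operatorname{neg}(\mathbf{A}_j)$ with the stated set.
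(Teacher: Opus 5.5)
Your strategy is the paper's: read a negative term for $\mathbf{A}_j$ as a positive term for $\mathbf{A}_{j+1}$ by swapping the roles of $U\vert H$ and $T_j\vert G_j$, then sort $\operatorname{pos}(\mathbf{A}_{j+1})$ by whether the lengthened chain is lexicographically decreasing at $j$. Steps 1 and 2 are correct.

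The gap is in Step 3, case $f\notin G'_j$. There you leave $f\in G'_{j-1}$ unproven, and the route you sketch starts from $G'_j\subseteq K$. That inclusion is backwards: $T'_j\vert G'_j<V\vert K$ means $T'_j\subseteq V$ and $K\subseteq G'_j$. With the correct inclusion, the step is the same one-line argument as in your Step 1, and Lemma~\ref{lem:dyck} is not needed. Since $\overline{S_s^c}\setminus G'_j\subseteq\overline{S_s^c}\setminus K$ and $f$ lies in the smaller set, $f=\min(\overline{S_s^c}\setminus G'_j)$. This lies in $G'_{j-1}$ because $\mathcal{B}'$ is lexicographically decreasing at $j-1$. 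This is exactly how the paper argues. Also, $f$ always exists, since $K\subseteq G'_1$ and $\overline{S_s^c}\not\subseteq G'_1$ because $s$ is a gap index.

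A second, smaller omission concerns the final equality. Let $\mathbf{C}$ denote the set described in the lemma. Your two cases only give $\operatorname{pos}(\mathbf{A}_{j+1})\subseteq\operatorname{neg}(\mathbf{A}_j)\cup\mathbf{C}$. To conclude $\operatorname{pos}(\mathbf{A}_{j+1})\setminus\operatorname{neg}(\mathbf{A}_j)=\mathbf{C}$, you also need two facts:
\begin{enumerate}
\item $\mathbf{C}\cap\operatorname{neg}(\mathbf{A}_j)=\emptyset$. This is immediate in your Step 1 notation, but it must be said. A negative chain has $U\vert H$ in position $j$ and $T_j\vert G_j$ in position $j+1$, and $\min(\overline{S_s^c}\setminus G_j)=e\notin H$. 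So it is not lexicographically decreasing at $j$; the paper records exactly this.
\item $\mathbf{C}\subseteq\operatorname{pos}(\mathbf{A}_{j+1})$. Your ``conversely'' sentence should say this explicitly. Deleting $U\vert H$ from a chain in $\mathbf{C}$ leaves a chain in $\mathbf{A}_{j+1}$. That chain is lexicographically decreasing at $j$, because $\min(\overline{S_s^c}\setminus G_{j+1})\in H\subseteq G_j$. And $U\vert H$ is a positive term for it.
\end{enumerate}
With these repairs your proof coincides with the paper's.
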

  \begin{proof}
By Lemma \ref{lem:canonical}, $\operatorname{pos}(\mathbf{A}_{j+1})$ consists of all of the biflags 
\[\mathcal{S}\vert \mathcal{F} < T_1 \vert G_1< \dotsb <T_{j}\vert G_{j} < U\vert H < T_{j+1} \vert G_{j+1} <\dotsb < T_{\ell}\vert G_\ell\]
that are lexicographically decreasing at indices $i \in [\ell+1]\setminus j$ and obey the condition that $U\vert H$ is the first biflat such that $\operatorname{rk}(U^c)<a-\ell$. We write these biflags as $\mathcal{S}\vert \mathcal{F}< \mathcal{T}\vert\mathcal{G} \cup \{U\vert H\}$. We first show that $\operatorname{neg}(\mathbf{A}_j)$ contains the subset of biflags $\mathcal{S}\vert \mathcal{F}< \mathcal{T}\vert\mathcal{G} \cup \{U\vert H\}$ that are not lexicographically decreasing at index $j$. Suppose that $\mathcal{S}\vert \mathcal{F}< \mathcal{T}\vert\mathcal{G} \cup \{U\vert H\}$
is not lexicographically decreasing at $j$. We claim that $\mathcal{S}\vert \mathcal{F}< \mathcal{T}\vert\mathcal{G} \cup \{U\vert H\}$ is in the set 
\[\operatorname{neg}(\mathcal{S}\vert \mathcal{F} < T_1 \vert G_1< \dotsb <T_{j-1}\vert G_{j-1} < U\vert H < T_{j+1} \vert G_{j+1} <\dotsb < T_{\ell}\vert G_\ell)\,. \]
Once we know that $\mathcal{S}\vert \mathcal{F}< \mathcal{T}\vert\mathcal{G} \cup \{U\vert H\} \setminus \{T_j\vert G_j\}$ is lexicographically decreasing,
the claim follows from Lemma \ref{lem:canonical} and the fact that $\operatorname{rk}(U^c)< a-\ell \leq \operatorname{rk}(T_{j-1}^c)$. To check that $\mathcal{S}\vert \mathcal{F}< \mathcal{T}\vert\mathcal{G} \cup \{U\vert H\} \setminus \{T_j\vert G_j\}$ is lexicographically decreasing, we need to verify that $\min(\overline{S_s^c}\setminus H)\in G_{j-1}$. To see this, note that because $\mathcal{S}|\mathcal{F} < \mathcal{T}|\mathcal{G} \cup \{U|H\}$ is not lexicographically decreasing, $\min(\overline{S_s^c}\setminus H) = \min(\overline{S_s^c}\setminus G_j)$, so $\min(\overline{S_s^c}\setminus H) \in G_{j-1}$. 

We now show that $\operatorname{neg}(\mathbf{A}_j)$ is contained in the specified subset of $\operatorname{pos}(\mathbf{A}_{j+1})$. By Lemma \ref{lem:canonical}, the set $\operatorname{neg}(\mathbf{A}_j)$ consists of biflags
\[\mathcal{S}\vert \mathcal{F} < T_1' \vert G_1'< \dotsb <T_{j-1}'\vert G_{j-1}'<U'\vert H'  < T_{j}' \vert G_{j}' <T_{j+1}' \vert G_{j+1}'<\dotsb < T_{\ell}'\vert G_\ell' \]
that are lexicographically decreasing at indices $i\in[\ell+1]\setminus \{j-1,j\}$, not lexicographically decreasing at index $j$, and obey the condition that $T_j'\vert G_j'$ is the first biflat such that $\operatorname{rk}(T_j'^c)<a-\ell$. We write these biflags as $\mathcal{S}\vert \mathcal{F} < \mathcal{T}'\vert \mathcal{G}' \cup \{U'\vert H'\}$. First, observe that $\mathcal{S}\vert \mathcal{F} < \mathcal{T}'\vert \mathcal{G}' \cup \{U'\vert H'\}$ is actually lexicographically decreasing at index $j-1$. This follows from the facts that $G_{j}'\subset H'$, $\min(\overline{S_s^c}\setminus G_{j}') \not\in H'$, and $\min(\overline{S_s^c}\setminus G_{j}')\in G_{j-1}'$. We claim that $\mathcal{S}\vert \mathcal{F} < \mathcal{T}'\vert \mathcal{G}' \cup \{U'\vert H'\}$ is contained in the set
\[\operatorname{pos}(\mathcal{S}\vert \mathcal{F} < T_1' \vert G_1'< \dotsb <T_{j-1}'\vert G_{j-1}'<U'\vert H'  < T_{j+1}' \vert G_{j+1}' <\dotsb < T_{\ell}'\vert G_\ell'). \]
Once we know that $\mathcal{S}\vert \mathcal{F}< \mathcal{T}'\vert\mathcal{G}' \cup \{U'\vert H'\} \setminus \{T_j'\vert G_j'\}$ is lexicographically decreasing, the claim follows from Lemma \ref{lem:canonical} and the fact that $\operatorname{rk}(T_{j+1}'^c)< a-\ell \leq \operatorname{rk}(U'^c)$. To check that $\mathcal{S}\vert \mathcal{F}< \mathcal{T}'\vert\mathcal{G}' \cup \{U'\vert H'\} \setminus \{T_j'\vert G_j'\}$ is lexicographically decreasing, we only need to verify that $\min(\overline{S_s^c}\setminus G_{j+1}')\in H'$. This, in turn, follows from the fact that $G_j'\subset H'$ and that $\mathcal{S}\vert \mathcal{F} < \mathcal{T}'\vert \mathcal{G}'$ is lexicographically decreasing. 
\end{proof}

Let $\mathbf{B}\subseteq \operatorname{pos}(\mathbf{A}_1)$ be the subset of biflags $\mathcal{S}\vert\mathcal{F} < U\vert H <\mathcal{T}\vert \mathcal{G}$ such that $\overline{S_s^c} \subseteq H$. Note that the definition of $\operatorname{pos}(\mathbf{A}_1)$ ensures that $\operatorname{rk}(U^c)<a-\ell$. 

  \begin{lemma}\label{lem:total_cancel}
    Let $\mathbf{A}'$ denote the set of all lexicographically decreasing biflags whose first component is $\mathcal{S}\vert \mathcal{F}$ and whose second component has length $\ell+1$. We have the equality
    \[\mathbf{A'} = \left(\operatorname{pos}(\mathbf{A}_1)\setminus \mathbf{B}\right) \sqcup \bigsqcup_{j=1}^{\ell}\operatorname{pos}(\mathbf{A}_{j+1})\setminus\operatorname{neg}(\mathbf{A}_{j}) = \left(\operatorname{pos}(\mathbf{A})\setminus\mathbf{B}\right) \setminus \operatorname{neg}(\mathbf{A}). \]
  \end{lemma}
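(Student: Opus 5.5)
The plan is to classify every biflag in $\mathbf{A}'$ by the position of its first biflat $U\vert H$ in the second component with $\operatorname{rk}(U^c)<a-\ell$, and to match each position with one of the pieces on the right-hand side.

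\textbf{Such a position always exists.} Write an element of $\mathbf{A}'$ as $\mathcal{S}\vert\mathcal{F}<T'_1\vert G'_1<\dotsb<T'_{\ell+1}\vert G'_{\ell+1}$. Lemma~\ref{lem:dyck}, applied with second-component length $\ell+1$, gives $\operatorname{rk}(T'^c_{\ell+1})\le a-\ell-1<a-\ell$. So there is a smallest index $k\in\{1,\dots,\ell+1\}$ with $\operatorname{rk}(T'^c_k)<a-\ell$. This partitions $\mathbf{A}'$ into sets $\mathbf{A}'_k$.

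\textbf{The pieces with $k\ge 2$.} By Lemma~\ref{lem:neg_containment}, the set $\mathbf{A}'_{j+1}$ with $1\le j\le \ell$ is exactly $\operatorname{pos}(\mathbf{A}_{j+1})\setminus\operatorname{neg}(\mathbf{A}_j)$. Here $U\vert H=T'_{j+1}$ is inserted after $T_j\vert G_j$.

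\textbf{The piece $k=1$.} It remains to show $\mathbf{A}'_1=\operatorname{pos}(\mathbf{A}_1)\setminus\mathbf{B}$. By Lemma~\ref{lem:canonical}, an element of $\operatorname{pos}(\mathbf{A}_1)$ has the form $\mathcal{S}\vert\mathcal{F}<U\vert H<T_1\vert G_1<\dotsb$. Here $\mathcal{S}\vert\mathcal{F}<\mathcal{T}\vert\mathcal{G}\in\mathbf{A}_1$, $\operatorname{rk}(U^c)<a-\ell$, and $e=\min(\overline{S_s^c}\setminus G_1)\in H$.
\begin{itemize}
\item \emph{Gap index.} For this chain to have first component $\mathcal{S}\vert\mathcal{F}$, the index $s$ must still be a gap index, i.e.\ $\overline{S_s^c}\not\subseteq H$. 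This is exactly the condition of lying outside $\mathbf{B}$.
\item \emph{Lexicographic decrease.} At index $0$ it is automatic. At the index between $U\vert H$ and $T_1\vert G_1$ it is the statement $e\in H$. At later indices it is inherited from $\mathcal{S}\vert\mathcal{F}<\mathcal{T}\vert\mathcal{G}$, since the relevant sets $G_i$, $G_{i+1}$ are unchanged.
\item \emph{Position.} $U\vert H$ is the first entry of the second component, so it is automatically the first biflat with small complement rank. Hence $\operatorname{pos}(\mathbf{A}_1)\setminus\mathbf{B}\subseteq\mathbf{A}'_1$.
\item \emph{Converse.} Take an element of $\mathbf{A}'_1$ and delete $U\vert H=T'_1\vert G'_1$. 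Then $s$ remains a gap index, because $G'_2\subseteq H$ and $\overline{S_s^c}\not\subseteq H$. The shortened chain is still lexicographically decreasing. It lies in $\mathbf{A}_1$, since $U\subseteq T'_2$ gives $\operatorname{rk}(T'^c_2)\le\operatorname{rk}(U^c)<a-\ell$. Lexicographic decrease at index $1$ of the original chain gives $e\in H$. So the element lies in $\operatorname{pos}(\mathbf{A}_1)\setminus\mathbf{B}$.
\end{itemize}
This gives the first equality, and its union is disjoint because the $\mathbf{A}'_k$ are.

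\textbf{The second equality.} First I show the sets $\operatorname{pos}(\mathbf{A}_j)$ are pairwise disjoint. In an element of $\operatorname{pos}(\mathbf{A}_j)$, the inserted $U\vert H$ sits between $T_{j-1}$ and $T_j$. Also $\operatorname{rk}(T_{j-1}^c)\ge a-\ell>\operatorname{rk}(U^c)$, so $j$ is recovered as the position of the first biflat with complement rank $<a-\ell$. Together with Lemma~\ref{lem:disjoint}, this shows $\operatorname{pos}(\mathbf{A})=\bigsqcup_{j=1}^{\ell+1}\operatorname{pos}(\mathbf{A}_j)$.

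Next, Lemma~\ref{lem:empty} gives $\operatorname{neg}(\mathbf{A})=\bigcup_{j=1}^{\ell}\operatorname{neg}(\mathbf{A}_j)$, and Lemma~\ref{lem:neg_containment} gives $\operatorname{neg}(\mathbf{A}_j)\subseteq\operatorname{pos}(\mathbf{A}_{j+1})$. Since $\mathbf{B}\subseteq\operatorname{pos}(\mathbf{A}_1)$, both $\mathbf{B}$ and the set $\operatorname{neg}(\mathbf{A}_j)$ sit inside $\operatorname{pos}(\mathbf{A}_{j'})$ for a single index each. Therefore removing $\mathbf{B}$ and $\operatorname{neg}(\mathbf{A})$ from $\operatorname{pos}(\mathbf{A})$ acts piece by piece, which yields the middle expression.

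The main obstacle is the $k=1$ piece. There one must carefully verify that deleting or inserting $U\vert H$ preserves both the first gap index $s$ and lexicographic decrease. This is the one place where $\mathbf{B}$ enters.
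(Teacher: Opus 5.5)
Your proof is correct and follows the same route as the paper. You partition $\mathbf{A}'$ by the position of the first biflat with complement rank $<a-\ell$, using Lemma~\ref{lem:dyck} for existence. You identify $\mathbf{A}'_1$ with $\operatorname{pos}(\mathbf{A}_1)\setminus\mathbf{B}$ directly, and invoke Lemmas~\ref{lem:neg_containment} and~\ref{lem:empty} for the remaining pieces. You also spell out two points the paper leaves implicit: the gap-index and lexicographic checks in the $k=1$ case, and the disjointness of the sets $\operatorname{pos}(\mathbf{A}_j)$ needed for the second equality (which is the $i\neq i'$ case of Lemma~\ref{lem:disjoint}).
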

  \begin{proof}
    For an index $1\leq j \leq \ell+1$, let $\mathbf{A}'_j\subseteq \mathbf{A}'$ be the set of all biflags $\mathcal{S}\vert \mathcal{F} < \mathcal{T}' \vert \mathcal{G}'$ such that $j$ is the first index where $\operatorname{rk}(T_j'^c)<a-\ell$. Lemma \ref{lem:dyck} implies that $\operatorname{rk}(T_{\ell+1}'^c)< a-\ell$, so the collection $\{\mathbf{A}'_j: 1\leq j \leq \ell+1\}$ partitions $\mathbf{A}'$. 
    
    It follows from the definition of $\operatorname{pos}(\mathbf{A}_1)$ that $\operatorname{pos}(\mathbf{A}_1)\setminus \mathbf{B} = \mathbf{A}'_1$. For an index $1\leq j \leq \ell$, it follows from Lemma \ref{lem:neg_containment} that $\operatorname{pos}(\mathbf{A}_{j+1})\setminus\operatorname{neg}(\mathbf{A}_{j})= \mathbf{A}'_{j+1}$. That $\operatorname{neg}(\mathbf{A}_{\ell+1})$ is empty and does not need to be removed from the middle term is the content of Lemma \ref{lem:empty}.
  \end{proof}
  
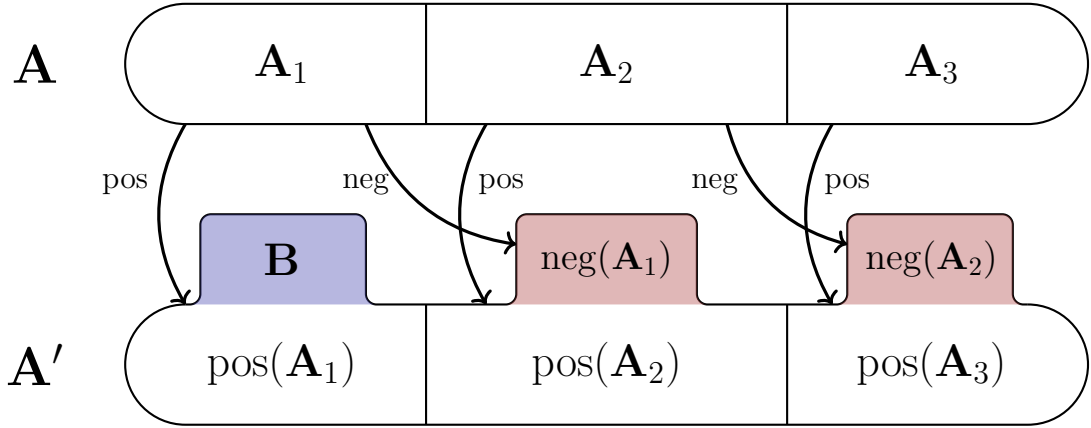
\begin{figure}
\begin{tikzpicture} [scale=.8]

\node at (-9.5, 0) {\Huge $\mathbf{A}$};
\draw[thick] (7,-1) arc (-90:90:1)--(-7,1) arc (-90:90:-1)-- cycle;
\draw[thick] (-3,1) to (-3,-1);
\draw[thick] (3,1) to (3,-1);
\node at (5.4,0) {\huge $\mathbf{A}_3$};
\node at (0,0) {\huge $\mathbf{A}_2$};
\node at (-5.4,0) {\huge $\mathbf{A}_1$};

\draw[thick]  (7,-4) arc (90:-90:1) --(-7,-6) arc (90:-90:-1);
\draw[thick] (-3,-6) to (-3,-4);
\draw[thick] (3,-6) to (3,-4);
\draw[thick] (7,-4)[rounded corners] -- (6.75,-4)[rounded corners] -- (6.75,-2.5)[rounded corners] -- (4,-2.5)[rounded corners] -- (4,-4)[rounded corners] -- (1.5,-4)[rounded corners] -- (1.5,-2.5)[rounded corners] -- (-1.5,-2.5)[rounded corners] -- (-1.5,-4)[rounded corners] -- (-4,-4)[rounded corners]-- (-4,-2.5)[rounded corners]--(-6.75,-2.5)[rounded corners]--(-6.75,-4)[rounded corners]--(-7,-4);
\fill[fill opacity =0.4, color=red!40!gray] (7,-4)[rounded corners] -- (6.75,-4)[rounded corners] -- (6.75,-2.5)[rounded corners] -- (4,-2.5)[rounded corners] -- (4,-4)[rounded corners] -- (1.5,-4)[rounded corners] -- (1.5,-2.5)[rounded corners] -- (-1.5,-2.5)[rounded corners] -- (-1.5,-4)[rounded corners] -- (-4.5,-4)--cycle;
\fill[fill opacity =0.4, color=blue!40!gray] (-1.5,-4)[rounded corners] -- (-4,-4)[rounded corners]-- (-4,-2.5)[rounded corners]--(-6.75,-2.5)[rounded corners]--(-6.75,-4)[rounded corners]--(-7,-4);

\node at (-9.5, -5) {\Huge $\mathbf{A'}$};
\node at (5.4,-5) {\huge $\operatorname{pos}(\mathbf{A}_3)$};
\node at (5.4,-3.25) {\LARGE $\operatorname{neg}(\mathbf{A}_2)$};

\node at (0,-5) {\huge $\operatorname{pos}(\mathbf{A}_2)$};
\node at (0,-3.25) {\LARGE $\operatorname{neg}(\mathbf{A}_1)$};

\node at (-5.4,-5) {\huge $\operatorname{pos}(\mathbf{A}_1)$};
\node at (-5.4,-3.25) {\huge $\mathbf{B}$};


\draw[very thick,arrows={->}] (-7,-1) to[bend right] (-7,-4);
\node at (-8,-2) {\Large$\operatorname{pos}$};
\draw[very thick,arrows={->}] (-4,-1) to[bend right] (-1.5,-3);
\node at (-4,-2) {\Large$\operatorname{neg}$};

\draw[very thick,arrows={->}] (-2,-1) to[bend right] (-2,-4);
\node at (-1.75,-2) {\Large$\operatorname{pos}$};
\draw[very thick,arrows={->}] (2,-1) to[bend right] (4,-3);
\node at (1.8,-2) {\Large$\operatorname{neg}$};

\draw[very thick,arrows={->}] (3.75,-1) to[bend right] (3.75,-4);
\node at (4,-2) {\Large$\operatorname{pos}$};
\end{tikzpicture}
\caption{An illustration of Lemma \ref{lem:total_cancel} in the $\ell+1=3$ case. The set $\operatorname{pos}(\mathbf{A})$ is equal to the entire bottom shape. The set $\mathbf{A}'$ is equal to the uncolored part of $\operatorname{pos}(\mathbf{A})$.}\label{fig:in_total}
\end{figure}

\begin{example}
    We illustrate some of the cancellation happening in Lemma~\ref{lem:total_cancel} for the pyramid graph $\Gamma$ of Figure~\ref{fig:graph}. The biflag $1267\vert E < E \vert 3$ has a second component of length one and is contained in the set $\mathbf{A}_1$. As we saw in Example~\ref{ex:canonical}, the biflag $1267\vert E < E \vert 34 < E \vert 3$ is in $\operatorname{pos}(1267\vert E < E\vert 3)$ and the biflag $1267\vert E < 124567\vert 378 < E\vert 3$ is in $\operatorname{neg}(1267\vert E < E\vert 3)$. As $1267\vert E < E \vert 34 < E \vert 3$ is lexicographically decreasing and has a second component of length $2$, it is contained in the set $\mathbf{A}'$. The biflag $1267\vert E < 124567\vert 378 < E\vert 3$ is not lexicographically decreasing, but it is contained in $\operatorname{pos}(1267\vert E < 124567\vert 378)$. As $1267\vert E < 124567\vert 378$ is in $\mathbf{A}_2$, this is one example of the containments $\operatorname{neg}(\mathbf{A}_1) \subseteq \operatorname{pos}(\mathbf{A}_2)$ and $\operatorname{pos}(\mathbf{A}_2)\setminus \operatorname{neg}(\mathbf{A}_1) \subseteq \mathbf{A}'$. 
\end{example}

  Applying Lemma \ref{lem:total_cancel} to the canonical expansion of \[ \sum_{\mathcal{S}\vert \mathcal{F} < \mathcal{T}\vert \mathcal{G}\in \mathbf{A}} x_{\mathcal{S}\vert \mathcal{F} < \mathcal{T}\vert \mathcal{G}} (\overline\gamma -v_{a-\ell}^-)\] 
  yields a nonnegative expression. The following proposition records what this nonnegative expression is.

    \begin{proposition}\label{prop:summary}
    Let $\mathbf{A}$ and $\mathbf{A}'$ denote the sets of all lexicographically decreasing biflags whose first component is $\mathcal{S}\vert \mathcal{F}$ and whose second component has length $\ell$ and $\ell+1$, respectively. Then
    \[\sum_{\mathcal{S}\vert\mathcal{F}<\mathcal{T}\vert\mathcal{G}\in \mathbf{A}}x_{\mathcal{S}\vert\mathcal{F}<\mathcal{T}\vert\mathcal{G}} (\overline\gamma -v_{a-\ell}^-) = \sum_{\mathcal{S}\vert\mathcal{F}<\mathcal{T}'\vert\mathcal{G}'\in \mathbf{A}'}x_{\mathcal{S}\vert\mathcal{F}<\mathcal{T}'\vert\mathcal{G}'} + \sum_{\mathcal{S}\vert \mathcal{F} < U\vert H < \mathcal{T}\vert \mathcal{G}\in \mathbf{B} }x_{\mathcal{S}\vert \mathcal{F} < U\vert H < \mathcal{T}\vert \mathcal{G}}\]
    where $\mathbf{B}$ is a collection of biflags $\mathcal{S}\vert \mathcal{F} < U\vert H < \mathcal{T}\vert \mathcal{G}$ such that $\overline{S_s^c}\subseteq H$ and $\operatorname{rk}(U^c)<a-\ell$.
  \end{proposition}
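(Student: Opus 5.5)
The plan is to assemble the identity from the canonical expansions and the set-theoretic bookkeeping already in place.

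First, expand each term of the left-hand side by its canonical expansion (Definition~\ref{def:canonical}). By Lemma~\ref{lem:canonical}, each such term becomes a signed sum of squarefree monomials, indexed by $\operatorname{pos}$ and $\operatorname{neg}$ of the individual biflag. By Lemma~\ref{lem:disjoint}, the positive monomials coming from different elements of $\mathbf{A}$ are pairwise distinct, and likewise for the negative ones. So the whole sum equals the difference of \eqref{eq:pos} and \eqref{eq:neg}. In this difference, each biflag of $\operatorname{pos}(\mathbf{A})$ appears with coefficient $+1$ and each biflag of $\operatorname{neg}(\mathbf{A})$ with coefficient $-1$.

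Second, show that $\operatorname{neg}(\mathbf{A}) \subseteq \operatorname{pos}(\mathbf{A})\setminus\mathbf{B}$, so that every negative term cancels a positive one.
\begin{itemize}
\item Since $\mathbf{A}=\bigsqcup_j \mathbf{A}_j$, Lemma~\ref{lem:empty} and Lemma~\ref{lem:neg_containment} give $\operatorname{neg}(\mathbf{A}) = \bigsqcup_{j=1}^{\ell}\operatorname{neg}(\mathbf{A}_j)$ with $\operatorname{neg}(\mathbf{A}_j)\subseteq \operatorname{pos}(\mathbf{A}_{j+1})$.
\item These targets lie in $\operatorname{pos}(\mathbf{A}_{j+1})$ for $j\ge 1$. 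The sets $\operatorname{pos}(\mathbf{A}_{j})$ for different $j$ are disjoint, because by Lemma~\ref{lem:canonical} the index $j$ is recovered as the position of the first biflat $U\vert H$ with $\operatorname{rk}(U^c)<a-\ell$.
\item Since $\mathbf{B}\subseteq \operatorname{pos}(\mathbf{A}_1)$, the negative terms never meet $\mathbf{B}$.
\end{itemize}
After cancellation, the left-hand side is the sum of $x$ over $\operatorname{pos}(\mathbf{A})\setminus\operatorname{neg}(\mathbf{A})$, with all coefficients $+1$.

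Third, split this remaining set as $\bigl((\operatorname{pos}(\mathbf{A})\setminus\mathbf{B})\setminus\operatorname{neg}(\mathbf{A})\bigr)\sqcup \mathbf{B}$. By Lemma~\ref{lem:total_cancel}, the first piece is exactly $\mathbf{A}'$, which gives the first sum on the right. The second piece gives the sum over $\mathbf{B}$. The stated properties of $\mathbf{B}$ then follow: $\overline{S_s^c}\subseteq H$ holds by the definition of $\mathbf{B}$, and $\operatorname{rk}(U^c)<a-\ell$ holds because $\mathbf{B}\subseteq\operatorname{pos}(\mathbf{A}_1)$ and the first sum in \eqref{eq:canonical} requires it.

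The main point to verify carefully is the bookkeeping in the second step: no monomial appears in two different $\operatorname{pos}(\mathbf{A}_j)$, so that the cancellation $\operatorname{neg}(\mathbf{A}_j)$ against $\operatorname{pos}(\mathbf{A}_{j+1})$ is exact rather than creating multiplicities. I would check this by reading off $j$ from the inserted biflat, as described above, using Lemma~\ref{lem:canonical} and the uniqueness of the decomposition of each $\operatorname{pos}$ element as $\mathcal{S}\vert\mathcal{F}<\mathcal{T}\vert\mathcal{G}\cup\{U\vert H\}$ guaranteed by Lemma~\ref{lem:disjoint}.
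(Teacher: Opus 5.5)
Your proposal is correct and follows the paper's proof. You expand canonically, use Lemma~\ref{lem:disjoint} to write the sum as $\sum_{\operatorname{pos}(\mathbf{A})}-\sum_{\operatorname{neg}(\mathbf{A})}$ with unit coefficients, cancel $\operatorname{neg}(\mathbf{A}_j)$ against part of $\operatorname{pos}(\mathbf{A}_{j+1})$ via Lemmas~\ref{lem:empty} and~\ref{lem:neg_containment}, and identify what remains as $\mathbf{A}'\sqcup\mathbf{B}$ by Lemma~\ref{lem:total_cancel}; the disjointness of the $\operatorname{pos}(\mathbf{A}_j)$ for different $j$, which you check by hand, is already covered by Lemma~\ref{lem:disjoint}, since that lemma applies to any two distinct elements of $\mathbf{A}$.
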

\begin{proof}
  By Lemma \ref{lem:disjoint} and Lemma~\ref{lem:total_cancel}, we have
    \begin{align*}
      \sum_{\mathbf{A}} x_{\mathcal{S}\vert \mathcal{F} < \mathcal{T}\vert \mathcal{G}} (\overline\gamma -v_{a-\ell}^-) &=\sum_{ \operatorname{pos}(\mathbf{A})} x_{\mathcal{S}\vert \mathcal{F} < \mathcal{T}\vert \mathcal{G}\cup \{U\vert H\}}-\sum_{\operatorname{neg}(\mathbf{A})} x_{\mathcal{S}\vert \mathcal{F} < \mathcal{T}\vert \mathcal{G}\cup \{U\vert H\}}\\
            &=\sum_{ \operatorname{pos}(\mathbf{A}_1)}x_{\mathcal{S}\vert \mathcal{F} < U\vert H < \mathcal{T}\vert \mathcal{G}} + \sum_{j=1}^\ell\sum_{ \operatorname{pos}(\mathbf{A}_{j+1})\setminus \operatorname{neg}(\mathbf{A}_j)}x_{\mathcal{S}\vert \mathcal{F} < \mathcal{T}\vert \mathcal{G}\cup \{U\vert H\}}\\
      &=\sum_{ \mathbf{A}'}x_{\mathcal{S}\vert \mathcal{F} < \mathcal{T}\vert \mathcal{G}\cup \{U\vert H\}} + \sum_{\mathbf{B} }x_{\mathcal{S}\vert \mathcal{F} < U\vert H < \mathcal{T}\vert \mathcal{G}}\,. \qedhere
  \end{align*}
\end{proof}

  We are now in a position to prove Proposition~\ref{thrm:min_dec}.
\begin{proof}[Proof of Proposition~\ref{thrm:min_dec}]
  We prove this by induction on the pair $(a,a-\ell)$. When $a=0$, $\mathcal{S}\vert \mathcal{F}$ is not a biflag and $x_{\mathcal{S}\vert \mathcal{F}}$ is zero.
  When $a-\ell=0$, Lemma \ref{lem:dyck} tells us both that $\operatorname{rk}(G_1 \cap \overline{S_s^c}) \leq a-1$ and that
  \[\operatorname{rk}(G_1\cap \overline{S_s^c})>\dotsb>\operatorname{rk}(G_{\ell-1}\cap \overline{S_s^c})>\operatorname{rk}(G_{\ell}\cap \overline{S_s^c}) >0. \]
  The chain of strict inequalities implies that $\operatorname{rk}(G_1\cap \overline{S_s^c})\geq a$, which is a contradiction. Therefore, there are no lexicographically decreasing biflags whose second component has length $a$ and our sum is zero.

  We now prove the claim for an arbitrary positive pair $(a,a-\ell)$ assuming the claim for all pairs $(a', a'-\ell')$ where either $a'<a$ and $a'-\ell' \leq a-\ell$ or $a' \leq a$ and $a'-\ell' < a-\ell$.

    Let $\mathbf{A}$ and $\mathbf{A}'$ denote the sets of all lexicographically decreasing biflags whose first component is $\mathcal{S}\vert \mathcal{F}$ and whose second component has length $\ell$ and $\ell+1$, respectively.  Proposition \ref{prop:summary} tells us that
  \begin{align*}
   & \sum_{\mathcal{S}\vert\mathcal{F}<\mathcal{T}\vert\mathcal{G}\in \mathbf{A}}x_{\mathcal{S}\vert\mathcal{F}<\mathcal{T}\vert\mathcal{G}}\left(\overline\gamma -v_{a-\ell}^-\right)\prod_{i=1}^{a-\ell-1} \left(\overline\gamma-v_i^-\right)\\
    &= \sum_{\mathcal{S}\vert \mathcal{F} < \mathcal{T}'\vert \mathcal{G}' \in \mathbf{A}'} x_{\mathcal{S}\vert \mathcal{F} < \mathcal{T}'\vert \mathcal{G}'}\prod_{i=1}^{a-\ell-1} \left(\overline\gamma-v_i^-\right)
      + \sum_{\mathcal{S}\vert \mathcal{F} < U\vert H < \mathcal{T}\vert \mathcal{G} \in \mathbf{B}}x_{\mathcal{S}\vert \mathcal{F} < U\vert H < \mathcal{T}\vert \mathcal{G}}\prod_{i=1}^{a-\ell-1} \left(\overline\gamma-v_i^-\right).
  \end{align*}
  The first sum is zero by the $(a,a-\ell-1)$ case of our induction hypothesis. By Proposition \ref{prop:summary}, every biflag indexing a term in the second sum has the property that $\overline{S_s^c} \subseteq H$ and $\operatorname{rk}(U^c)<a-\ell$, so $x_{\mathcal{S}\vert \mathcal{F} < U\vert H}\prod_{i=1}^{a-\ell-1} \left(\overline\gamma-v_i^-\right)$ vanishes by the induction hypothesis. Therefore the second sum is zero. 
\end{proof}

\begin{proposition}\label{prop:reductionToVs}
    In $A(\Sigma_{N,\mathrm{M}})$, we have \[\delta^r + c_1 \delta^{r-1} + \dotsb + c_r = (\delta + u_1) \dotsb (\delta + u_r) = (\overline{\gamma} - v_1^-) \dotsb (\overline{\gamma} - v_r^-).\]
\end{proposition}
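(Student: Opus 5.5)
The first equality is formal. By Lemma~\ref{lem:uReduction}, $\pi_1^*c_i$ is the $i$th elementary symmetric function $e_i(u_1,\dots,u_r)$, with $u_j \in A^1(\Sigma_{N,N})$. Expanding $\prod_{j=1}^r(\delta+u_j)$ therefore gives $\sum_i \delta^{r-i} e_i(u)=\delta^r+c_1\delta^{r-1}+\dotsb+c_r$ in $A(\Sigma_{N,N})$, and this identity restricts to $A(\Sigma_{N,\mathrm{M}})$. We also have $\delta+u_j=\overline{\gamma}+v_j^+-v_j^-$, which was computed before Proposition~\ref{thrm:min_dec}. So the second equality amounts to showing
\[\prod_{j=1}^r(\overline\gamma+v_j^+-v_j^-)=\prod_{j=1}^r(\overline\gamma-v_j^-).\]

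The plan is to expand the left side multilinearly in the $v_j^+$ terms. Each term is indexed by a nonempty subset $J\subseteq[r]$ of factors in which we choose $v_j^+$; all other factors contribute $\overline\gamma - v_j^-$. Every $v_j^+$ is a sum of $x_{S\vert[N]}$ over $S\ne[N]$ with $\operatorname{rk}(S^c)<j$. Biflats of the form $S\vert[N]$ are pairwise comparable only when the $S$'s are nested. Products of the variables coming from the chosen $v^+$ factors are therefore, modulo $I_{N,\mathrm{M}}$, sums of $x_{\mathcal{S}\vert\mathcal{F}}$ over chains $S_1\vert[N]<\dotsb<S_s\vert[N]$ together with squares; I would first handle squares by rewriting any repeated variable using a linear relation, or better, avoid them by ordering the argument as follows. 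Induct on the largest $j\in J$, or equivalently peel off the factors one at a time from $j=r$ downward.

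The key observation concerns a chain $\mathcal{S}\vert\mathcal{F}$ with all $F_i=[N]$. Since each $S_{i}\cup [N]=[N]$, no index before $s$ is a gap index, so it has exactly the shape required in Proposition~\ref{thrm:min_dec}. Its top element satisfies $a=\operatorname{rk}(\overline{S_s^c})=\operatorname{rk}(S_s^c)<j$ for the relevant $j$. The $\ell=0$ case of Proposition~\ref{thrm:min_dec} then gives $x_{\mathcal{S}\vert\mathcal{F}}\prod_{i=1}^{a}(\overline\gamma-v_i^-)=0$; here the only lexicographically decreasing biflag with empty second component is $\mathcal{S}\vert\mathcal{F}$ itself. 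So I would argue by downward induction on $k$ that
\[\prod_{j=1}^{k}(\overline\gamma+v_j^+-v_j^-)\cdot P_k=\prod_{j=1}^{k}(\overline\gamma-v_j^-)\cdot P_k,\]
where $P_k=\prod_{j>k}(\overline\gamma-v_j^-)$. Rather than this, it seems cleaner to induct upward, showing $\prod_{j\le k}(\overline\gamma+v_j^+-v_j^-)=\prod_{j\le k}(\overline\gamma-v_j^-)+R_k$, where $R_k$ is a combination of terms $x_{\mathcal{S}\vert[N]}\cdot(\text{stuff})$ with top rank $a<$ something. The cleanest version: any monomial containing $x_{S\vert[N]}$ chosen from $v_j^+$ is multiplied by the factors $\overline\gamma - v_i^-$ for all $i\in[1,a]$ with $a=\operatorname{rk}(S^c)<j$. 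This requires choosing $v^+$ only in the factor with the largest index, and expanding the remaining factors with indices below it completely into $\overline\gamma - v_i^-$. Concretely, I would telescope:
\[\prod_{j=1}^r(\overline\gamma+v_j^+-v_j^-)-\prod_{j=1}^r(\overline\gamma-v_j^-)=\sum_{k=1}^r\Bigl(\prod_{j<k}(\overline\gamma-v_j^-)\Bigr)v_k^+\Bigl(\prod_{j>k}(\delta+u_j)\Bigr).\]
Each summand contains $x_{S\vert[N]}\prod_{i=1}^{a}(\overline\gamma-v_i^-)$ with $a=\operatorname{rk}(S^c)\le k-1$, and this vanishes by Proposition~\ref{thrm:min_dec} applied to the one-element chain $S\vert[N]$. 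That chain is legitimate, since its only possible gap index is $s=1$ because $\emptyset\cup[N]=[N]$. When $a=0$ the factor is zero outright, since $S=[N]$ is excluded.

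The main obstacle I anticipate is verifying that the hypotheses of Proposition~\ref{thrm:min_dec} genuinely apply to the single biflat $S\vert[N]$, and that the proposition's sum with $\ell=0$ is exactly $x_{S\vert[N]}$. This is mostly bookkeeping with the conventions $S_0\vert F_0$ and $T_0\vert G_0$, together with $\overline{S^c}$ versus $S^c$ having equal rank. Beyond that, the telescoping argument is purely formal.
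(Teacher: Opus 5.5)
Your proposal is correct and is essentially the paper's proof. The paper also gets the first equality from Lemma~\ref{lem:uReduction}, and the second from the vanishing $v_j^+\prod_{i=1}^{j-1}(\overline{\gamma}-v_i^-)=0$, which comes from the $\ell=0$ case of Proposition~\ref{thrm:min_dec} applied to the single biflats $S\vert[N]$ exactly as you check. Your telescoping identity just makes explicit the step the paper summarizes as ``so the claim follows''; the earlier detour about squares and chains of $v^+$ terms is unnecessary.
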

\begin{proof}
The first equality follows from Lemma \ref{lem:uReduction}. Note that $v_1^+ = 0$. By the $\ell=0$ case of Proposition~\ref{thrm:min_dec}, $v_j^+ \prod_{i=1}^{j-1} (\overline{\gamma} - v_i^-) = 0$ in $A(\Sigma_{N, \mathrm{M}})$ for each $j\in [r]$, so the claim follows.
\end{proof}

\begin{proof}[Proof of Theorem~\ref{thm:identity}]
It is known that $\Sigma_{N, N}$ is a projective fan \cite[Proposition 2.20]{ADH}, so $\Sigma_{N, \mathrm{M}}$ is quasi-projective. As the support of $\Sigma_{N, \mathrm{M}}$ is the product of $\mathbb{R}^N$ with the support of the Bergman fan of $\mathrm{M}$, it is a Lefschetz fan by Proposition~\ref{prop:Lefschetzfans}, so $A(\Sigma_{N, \mathrm{M}})$ satisfies Poincar\'{e} duality. 

By Proposition \ref{prop:reductionToVs}, it suffices to show that $(\overline{\gamma} - v_1^-) \dotsb (\overline{\gamma} - v_r^-)=0$ in $A(\Sigma_{N,\mathrm{M}})$.
The balanced fan $\Sigma_{N,\mathrm{M}}$ defines a Minkowski weight $[\Sigma_{N, \mathrm{M}}]$ in $A(\Sigma_{N,N})$. By Poincar\'{e} duality for $A(\Sigma_{N, \mathrm{M}})$ and because the restriction map $\iota^* \colon A(\Sigma_{N,N}) \to A(\Sigma_{N, \mathrm{M}})$ is surjective, $(\overline{\gamma} - v_1^-) \dotsb (\overline{\gamma} - v_r^-) = 0$ if and only if $\deg_{\Sigma_{N, \mathrm{M}}}(\overline{\gamma} - v_1^-) \dotsb (\overline{\gamma} - v_r^-) \iota^*z = 0$ for every $z \in A(\Sigma_{N, N})$. Using the identity 
$$\deg_{\Sigma_{N, \mathrm{M}}}(\overline{\gamma} - v_1^-) \dotsb (\overline{\gamma} - v_r^-) \iota^*z = \deg_{\Sigma_{N,N}} (\overline{\gamma} - v_1^-) \dotsb (\overline{\gamma} - v_r^-) [\Sigma_{N, \mathrm{M}}] z$$
and Poincar\'{e} duality for $A(\Sigma_{N,N})$, we see that 
$$(\overline{\gamma} - v_1^-) \dotsb (\overline{\gamma} - v_r^-) = 0 \text{ in }A(\Sigma_{N, \mathrm{M}}) \text{ if and only if }(\overline{\gamma} - v_1^-) \dotsb (\overline{\gamma} - v_r^-)[\Sigma_{N, \mathrm{M}}] = 0 \text{ in }A(\Sigma_{N,N}).$$
Let $T \mathrm{M}$ denote the truncation of $\mathrm{M}$; this is a matroid whose bases are the independent sets of $\mathrm{M}$ of rank $r-1$. If $r > 1$, then $T \mathrm{M}$ is loopless because $\mathrm{M}$ is. By \cite[Proposition 4.4]{TropicalLin} (see also \cite[Corollary 1.7]{EHL}), if $r > 1$, then $\overline{\gamma} \cdot [\Sigma_{N, \mathrm{M}}] = [\Sigma_{N, T \mathrm{M}}]$. If $r=1$, then $\overline{\gamma} \cdot [\Sigma_{N, \mathrm{M}}] = 0$. 

We induct on the rank of $\mathrm{M}$. As $\mathrm{M}$ is assumed to be loopless, the least possible rank of $\mathrm{M}$ is $1$. The argument we will give for the inductive step works when $\mathrm{M}$ has rank $1$, using that in this case $\overline{\gamma} \cdot [\Sigma_{N, \mathrm{M}}] = 0$ as input. 

Note that $\iota^*v_r^- = 0$ in $A(\Sigma_{N, \mathrm{M}})$: given a term $x_{S|F}$ with $\operatorname{rk}(S^c) \ge r$, then we must have $\operatorname{rk}(F) = r$ because $S^c \subseteq F$. But this implies that $F = [N]$. It follows that $v_r^{-} [\Sigma_{N, \mathrm{M}}] = 0$. We therefore have
$$(\overline{\gamma} - v_1^-) \dotsb (\overline{\gamma} - v_r^-)[\Sigma_{N, \mathrm{M}}] =  (\overline{\gamma} - v_1^-) \dotsb (\overline{\gamma} - v_{r-1}^-)[\Sigma_{N, T \mathrm{M}}].$$
Note that, for $j < r$, the classes $v_j^-$ defined using $\mathrm{M}$ are the same as the classes $v_j^-$ defined using $T\mathrm{M}$: the rank of $S^c$ in $\mathrm{M}$ is at least $j$ if and only if the rank of $S^c$ in $T \mathrm{M}$ is at least $j$. The desired vanishing then follows from induction. 
\end{proof}

\section{Proof of the main result}

We now proceed with the proof of Theorem~\ref{thm:kahler}. We first establish some results on rings which resemble the cohomology rings of projective bundles over smooth projective varieties. 

\subsection{Projective bundle rings}\label{ssec:pbundle}

Let $A$ be a finite-dimensional commutative graded $\mathbb{R}$-algebra equipped with an open convex cone $\mathcal{K} \subset A^1$ and a map $\deg_A \colon A^n \to \mathbb{R}$ which satisfies the K\"{a}hler package. We will develop the basic algebraic properties of rings which resemble the cohomology rings of projective bundles. 
For some $r \ge 1$, let $c_1, \dotsc, c_r$ be classes in $A^1, \dotsc, A^r$, respectively. Set
$$B = \frac{A[\zeta]}{(\zeta^r + c_1 \zeta^{r-1} + \dotsb + c_r)}.$$
There is a natural ring homomorphism $\pi^* \colon A \to B$. As $A$-modules, we have a decomposition
\begin{equation}\label{eq:decomposition}
  B \cong \bigoplus_{i=0}^{r-1} \zeta^i A.
\end{equation}
There is an isomorphism $\deg_B \colon B^{n + r - 1} \to \mathbb{R}$ which is defined by the condition that if $x \in A^n$ is a class with $\deg_A(x) = 1$, then $\deg_B(\zeta^{r-1} x) = 1$. 
\begin{proposition}\label{prop:PD}
The ring $B$ has Poincar\'{e} duality. 
\end{proposition}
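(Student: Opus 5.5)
The plan is to show that the pairing $B^k \times B^{n+r-1-k} \to \mathbb{R}$, $(x,y)\mapsto \deg_B(xy)$, is represented, in suitable bases, by a block triangular matrix whose diagonal blocks are Poincar\'{e} duality pairings of $A$. Using the decomposition \eqref{eq:decomposition}, write $B^k = \bigoplus_{i=0}^{r-1} \zeta^i A^{k-i}$ and $B^{n+r-1-k} = \bigoplus_{j=0}^{r-1}\zeta^j A^{n+r-1-k-j}$. For $a \in A^{k-i}$ and $b \in A^{n+r-1-k-j}$, the product $\zeta^{i+j} ab$ has to be reduced to the normal form $\sum_{m<r} \zeta^m A$ before $\deg_B$ can be applied. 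By definition, $\deg_B$ only sees the coefficient of $\zeta^{r-1}$, which lies in $A^n$, and it applies $\deg_A$ to that coefficient.

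The key computation is the reduction of $\zeta^{p}$ for $p \ge r-1$. Define the classes $s_m \in A^m$ (Segre-type classes) by $\zeta^{r-1+m} \equiv s_m \zeta^{r-1} + (\text{terms } \zeta^{<r-1}A)$. By induction on $m$ using the relation $\zeta^r = -c_1\zeta^{r-1} - \dotsb - c_r$, one gets $s_0 = 1$, $s_m = 0$ when $m<0$ (that is, when $i+j<r-1$ the $\zeta^{r-1}$ coefficient vanishes), and $s_m$ is a polynomial in the $c$'s. This gives
\[
\deg_B(\zeta^i a \cdot \zeta^j b) = \deg_A(s_{i+j-r+1}\, ab),
\]
which is $0$ if $i+j<r-1$ and equals $\deg_A(ab)$ if $i+j=r-1$.

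Next, order the summands of $B^k$ by $i$ and those of $B^{n+r-1-k}$ by $j' = r-1-j$. The pairing matrix is then block triangular: the $(i,j')$ block vanishes when $i<j'$, and the block with $i=j'$ is the pairing $A^{k-i}\times A^{n-(k-i)}\to\mathbb{R}$, $(a,b)\mapsto\deg_A(ab)$. Each diagonal block is nondegenerate by Poincar\'{e} duality for $A$, and the dimensions match ($\dim A^{k-i} = \dim A^{n-k+i}$, also by Poincar\'{e} duality). A block triangular square matrix with invertible diagonal blocks is invertible, so the pairing on $B$ is nondegenerate. Summands with $k-i$ outside $[0,n]$ are zero on both sides simultaneously, so they cause no trouble.

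The only step needing care is the vanishing of the $\zeta^{r-1}$-coefficient of $\zeta^{p}$ for $p<r-1$, together with the fact that this coefficient is $1$ when $p=r-1$. Both are immediate because such monomials are already in normal form. The triangularity argument itself is routine bookkeeping.
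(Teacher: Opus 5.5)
Your proof is correct and follows the paper's argument. Both use the decomposition $B^k \cong \bigoplus_{i} \zeta^i A^{k-i}$, observe that the pairing of $\zeta^i A^{k-i}$ with $\zeta^j A^{n+r-1-k-j}$ vanishes for $i+j<r-1$ and equals the Poincar\'{e} pairing of $A$ when $i+j=r-1$, and conclude by block triangularity. Your explicit computation of the $\zeta^{r-1}$-coefficient just spells out details the paper leaves implicit; your classes $s_m$ are the Segre classes the paper uses later in Proposition~\ref{prop:kahlerquotient}.
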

\begin{proof}
For each $k \ge 0$, by \eqref{eq:decomposition}, $B^k \cong \bigoplus_{i=0}^{r-1} \zeta^i \cdot A^{k - i}$ and $B^{n + r - 1 - k} \cong \bigoplus_{j=0}^{r-1} \zeta^{j} \cdot A^{n + r - 1 - k - j}.$ The pairing between two terms in these decompositions is zero unless $i + j \ge r-1$, and it is nondegenerate when $j = r - 1 - i$. After choosing a basis for each $A^i$, the matrix representing the Poincar\'{e} pairing is block triangular, and therefore this pairing is nondegenerate by Poincar\'{e} duality for $A$. 
\end{proof}

Theorem~\ref{thm:kahler} establishes cases in which the K\"{a}hler package holds for $B$. In such cases, an argument from \cite{BlochGieseker} gives consequences for $A$. 

\begin{theorem}\label{thm:blochgieseker}
Suppose that, for each $i$, the map $B^i \to B^{i+1}$ given by multiplication by $\zeta$ has full rank. Let $d = \min\{r, n\}$. Then the map $A^i \to A^{i+d}$ given by multiplication by $c_d$ is injective for $i \le (n-r)/2$ and is surjective for $i \ge (n-r)/2$. 
\end{theorem}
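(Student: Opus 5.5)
The plan is to use the element $Q=\zeta^{r-1}+c_1\zeta^{r-2}+\dotsb+c_{r-1}\in B^{r-1}$, and truncated variants of it, as a ``kernel witness'': it satisfies $\zeta Q=-c_r$ in $B$. Injectivity of $c_d$ will then follow from injectivity of powers of $\zeta$ on $B$, and surjectivity from Poincar\'{e} duality in $A$. Throughout, write $N=n+r-1$ for the top degree of $B$, and recall from \eqref{eq:decomposition} that every element of $B$ is uniquely of the form $\sum_{j=0}^{r-1}\zeta^j a_j$ with $a_j\in A$.

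\emph{Step 1: $\zeta$ is injective on $B^k$ for $k<N/2$ and surjective for $k\ge N/2$.} From \eqref{eq:decomposition}, $\dim B^k=\sum_{j=0}^{r-1}\dim A^{k-j}$. Because $A$ has the K\"{a}hler package, the sequence $(\dim A^i)$ is symmetric about $n/2$ and unimodal. Convolving it with the all-ones sequence of length $r$ gives a sequence $(\dim B^k)$ that is symmetric about $N/2$ and unimodal. Multiplication by $\zeta$ has full rank by hypothesis, so it is injective whenever $\dim B^k\le\dim B^{k+1}$, which holds for $k<N/2$, and surjective whenever $\dim B^k\ge \dim B^{k+1}$, which holds for $k\ge N/2$.

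\emph{Step 2: injectivity when $n\ge r$, so that $d=r$.} Take $x\in A^i$ with $c_rx=0$ and $i\le (n-r)/2$, and set $y=xQ\in B^{i+r-1}$. The defining relation of $B$ gives $\zeta y=x(\zeta^r+c_1\zeta^{r-1}+\dotsb+c_{r-1}\zeta)=-c_rx=0$. The inequality $i\le (n-r)/2$ is equivalent to $i+r-1<N/2$, so Step 1 forces $y=0$. The $\zeta^{r-1}$-coefficient of $y$ in \eqref{eq:decomposition} is $x$, hence $x=0$. The case $n<r$ has no degrees with $i\le(n-r)/2<0$, so nothing needs to be checked there.

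\emph{Step 3: surjectivity.} By Poincar\'{e} duality for $A$, the map $c_d\colon A^i\to A^{i+d}$ is surjective if and only if $c_d\colon A^{n-i-d}\to A^{n-i}$ is injective. When $d=r$, the condition $i\ge(n-r)/2$ is exactly $n-i-r\le (n-r)/2$, so Step 2 applies.

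When $d=n<r$, only $i=0$ is nonvacuous, and we must show $c_n\ne 0$ in $A^n$. Since $c_j=0$ for $j>n$ for degree reasons, the relation in $B$ reads $\zeta^{r-n}(\zeta^n+c_1\zeta^{n-1}+\dotsb+c_n)=0$. Suppose $c_n=0$ and put $w=\zeta^{n-1}+c_1\zeta^{n-2}+\dotsb+c_{n-1}\in B^{n-1}$. Then $w\neq 0$, because its $\zeta^{n-1}$-coefficient is $1$ and $n-1<r$, and $\zeta^{r-n+1}w=0$. The degrees $n-1$ and $r$ are symmetric about $N/2$, since $n-1+r=N$. So this contradicts $\zeta^{N-2k}\colon B^k\to B^{N-k}$ being an isomorphism for $k=n-1$.

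\emph{Main obstacle.} The difficulty lies in that last step. Knowing only that each single multiplication by $\zeta$ has full rank does not immediately give that the composite $\zeta^{N-2k}$ across the middle degree is an isomorphism: an injection followed by a surjection can lose rank. I would first try to deduce it by pairing. For $u\in B^k$ with $\zeta^{N-2k}u=0$, use Step 1 at the middle degrees together with Poincar\'{e} duality for $B$ (Proposition~\ref{prop:PD}), which identifies surjectivity on $B^{N-k-1}$ with injectivity on $B^k$. If this does not close up, I would strengthen the hypothesis to Hard Lefschetz for $\zeta$ on $B$. That stronger hypothesis is what Theorem~\ref{thm:kahler} supplies in the intended application, since $\zeta$ lies in the closure of the K\"{a}hler cone there and one can perturb by a small ample class. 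For the case $n\ge r$, Step 1 alone already suffices.
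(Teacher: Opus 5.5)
Your Steps 1--3 for $n\ge r$ (which includes $r=n$) are correct and coincide with the paper's proof. The paper also gets injectivity of $\zeta\colon B^{r+i-1}\to B^{r+i}$ from the full-rank hypothesis and the dimension count. It reads off injectivity of $c_r$ by comparing the decompositions \eqref{eq:decomposition}, which is exactly your witness $y=xQ$ with $\zeta y=-c_rx$. Surjectivity then comes from Poincar\'{e} duality in $A$.

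The case $n<r$ is a genuine gap, but no argument can close it, because under the stated hypothesis the claim is false there. Take $A=\mathbb{R}[x]/(x^2)$, so $n=1$ and $A$ has the K\"{a}hler package with $\mathcal{K}=\mathbb{R}_{>0}x$, and take $r=2$, $c_1=0$. Then $B=\mathbb{R}[x,\zeta]/(x^2,\zeta^2)$ is the Chow ring of $\mathbb{P}(\mathcal{O}_{\mathbb{P}^1}^{\oplus 2})=\mathbb{P}^1\times\mathbb{P}^1$. Here $\zeta\colon B^0\to B^1$ is injective and $\zeta\colon B^1\to B^2$ is surjective, yet $c_d=c_1=0$ is not surjective onto $A^1$.

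So the pairing argument you hoped to try cannot work. Full rank of each single multiplication says nothing about the composite $\zeta^{r-n+1}\colon B^{n-1}\to B^{r}$; in this example $\zeta^2=0$.

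The paper's one-line treatment of this case, namely that $\zeta\colon B^{r-1}\to B^r$ is injective, is only valid for $r=n$. When $r>n$ one has $\dim B^{r-1}=\dim A>\dim A-1=\dim B^r$. Moreover your $Q\neq 0$ lies in the kernel, since $\zeta Q=-c_r=0$.

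Your fallback is the correct repair. If $\zeta^{r-n+1}\colon B^{n-1}\to B^r$ is injective (for instance if $\zeta$ satisfies Hard Lefschetz on $B$), your witness $w$ finishes the proof, since $\zeta^{r-n+1}w=-c_n\zeta^{r-n}$. This stronger hypothesis is what actually holds in the paper's applications: after twisting, $\zeta$ lies in the interior of the K\"{a}hler cone of $B$. So the subsequent corollary $(-1)^n\deg_A(c_n)\ge 0$ is unaffected.
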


While the original proof in \cite{BlochGieseker} can be adapted to this setting, we give a slightly different argument. 

\begin{proof}
First consider the case when $r \le n$. Under the Poincar\'{e} pairing, the map $A^i \to A^{i + r}$ given by multiplication by $c_r$ is dual to the map $A^{n - i - r} \to A^{n - i}$ given by multiplication by $c_r$, so it suffices to prove the injectivity part. By \eqref{eq:decomposition}, we have $B^{r + i - 1} = \zeta^{r-1} A^{i} \oplus \zeta^{r-2} A^{i+1} \oplus \dotsb \oplus A^{r + i - 1}$ and $B^{r + i} = \zeta^{r-1} A^{i + 1} \oplus \dotsb \oplus A^{r + i}$. As $i \le (n - r)/2$, the map $B^{r + i - 1} \to B^{r + i}$ given by multiplication by $\zeta$ is injective. Comparing with these decompositions, we see that the map $A^{i} \to A^{r + i}$ given by multiplication by $c_r$ is injective. 

When $r \ge n$, the only nontrivial statement is that $c_n$ is nonzero. This follows from the fact that the map $B^{r-1} \to B^r$ given by multiplication by $\zeta$ is injective. 
\end{proof}

Theorem~\ref{thm:kahler} does not assert that multiplication by $\zeta$ satisfies the hypothesis of Theorem~\ref{thm:blochgieseker}. However, one can apply Theorem~\ref{thm:blochgieseker} to the rings considered in Theorem~\ref{thm:kahler} by ``twisting.'' Given $\delta \in A^1$, for each $i$ set $c_i' = \sum_{j=0}^{i} (-1)^j\binom{r - i + j}{j} c_{i-j} \delta^j$. Then there is an isomorphism of $A$-algebras
$$\frac{A[\zeta]}{(\zeta^r + c_1' \zeta^{r-1} + \dotsb + c_r')} \to B, \quad \zeta \mapsto \zeta + \delta.$$
In particular, if $A$ is as in Theorem~\ref{thm:kahler} and $\delta \in \mathcal{K}$, then the projective bundle ring defined using $c_1', c_2', \dotsc$ has the K\"{a}hler package with respect to a cone which contains $\zeta$ in its interior, so Theorem~\ref{thm:blochgieseker} implies that multiplication by $c_d'$ has maximal rank. 

Suppose that $B$ has the K\"{a}hler package with respect to the cone $\mathcal{K} + \mathbb{R}_{>0} \zeta$, and that $r \ge n$. For $\delta \in \mathcal{K}$ and a real number $\lambda$, if we twist by $\lambda \delta$, we have $\deg_A(c_n') = (-1)^n\lambda^{n}\binom{r}{n}\deg_A(\delta^n) + O(\lambda^{n-1})$. This has sign $(-1)^n$ for $\lambda$ sufficiently large. As Theorem~\ref{thm:blochgieseker} implies that $c_n'$ does not vanish after we twist by $\lambda \delta$ for any $\lambda > 0$, we have the following corollary. 

\begin{corollary}
Suppose that $r \ge n$, and $B = A[\zeta]/(\zeta^r + c_1 \zeta^{r-1} + \dotsb + c_n \zeta^{r - n})$ has the K\"{a}hler package with respect to the cone $\mathcal{K} + \mathbb{R}_{>0} \zeta$. Then $(-1)^n\deg_A(c_n) \ge 0$. 
\end{corollary}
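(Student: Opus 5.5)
We will apply Theorem~\ref{thm:blochgieseker} to a twisted ring and then let the twisting parameter vary, exactly as in the discussion preceding the corollary. Fix $\delta \in \mathcal{K}$. For $\lambda > 0$, set $c_i' = \sum_{j=0}^{i} (-1)^j\binom{r-i+j}{j} c_{i-j} (\lambda\delta)^j$, with the convention $c_i = 0$ for $i > n$ (these lie in $A^i = 0$ anyway). Then $B \cong A[\zeta]/(\zeta^r + c_1'\zeta^{r-1} + \dotsb + c_r')$ via $\zeta \mapsto \zeta + \lambda\delta$. Under this isomorphism, the class of the new $\zeta$ corresponds to $\zeta - \lambda\delta$. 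We need the new variable to lie in the Kähler cone of the twisted presentation; equivalently, the original $\zeta$ should lie in the transported cone $\mathcal{K} + \mathbb{R}_{>0}(\zeta + \lambda \delta)$. This holds because $\zeta = (\zeta + \lambda\delta) - \lambda\delta$. So the twisted ring has the Kähler package with respect to the cone $\mathcal{K} + \lambda\delta + \mathbb{R}_{>0}\zeta$ pulled back. I will phrase it more cleanly: the twisted ring $B'$ has the Kähler package with respect to the cone $\{a + t\zeta' : a \in \mathcal{K} - t\lambda\delta ... \}$. The key point is only that $\zeta'$ itself lies in the interior of a cone for which Hard Lefschetz holds. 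The paper's preceding remark asserts this, so I will take that as given.

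Hard Lefschetz for $\zeta'$ on $B'$ (of top degree $n+r-1$) says that multiplication by $\zeta'$ has full rank in every degree. Theorem~\ref{thm:blochgieseker} then applies with $d = \min\{r,n\} = n$. It gives that $c_n' \colon A^0 \to A^n$ is injective when $0 \le (n-r)/2$, and surjective otherwise. Either way $c_n' \ne 0$, because $A^0 \cong A^n \cong \mathbb{R}$ by Poincar\'e duality. Hence $\deg_A(c_n'(\lambda)) \ne 0$ for every $\lambda > 0$.

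Now $f(\lambda) := \deg_A(c_n'(\lambda))$ is a polynomial in $\lambda$. Its leading term is $(-1)^n\binom{r}{n}\lambda^n\deg_A(\delta^n)$, where $\deg_A(\delta^n) > 0$ by the degree-$0$ Hodge--Riemann relation for $A$. Its constant term is $f(0) = \deg_A(c_n)$. Since $f$ is continuous and nonvanishing on $(0,\infty)$ with sign $(-1)^n$ for large $\lambda$, it has sign $(-1)^n$ on all of $(0,\infty)$. Letting $\lambda \to 0^+$ gives $(-1)^n\deg_A(c_n) \ge 0$.

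The main obstacle is the cone bookkeeping in the first step: checking that after twisting by $\lambda\delta$, the new tautological class sits inside the Kähler cone of the twisted presentation. I will verify this directly. The cone $\mathcal{K} + \mathbb{R}_{>0}\zeta$ is open in $B^1 = A^1 \oplus \mathbb{R}\zeta$ and contains $\zeta + \mu\delta$ for every $\mu > 0$. In the twisted coordinates this element is $\zeta' + (\lambda + \mu)\delta$. Openness, together with Hard Lefschetz holding for every class in the cone, then gives what is needed. If this turns out to be delicate, the fallback is to twist by $-\lambda\delta$ instead, since what matters is only that $\zeta'$ is in the cone, and then redo the sign analysis.
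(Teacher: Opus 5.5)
Your overall route is the paper's: twist by $\lambda\delta$ with $\lambda>0$, obtain $\deg_A(c_n'(\lambda))\neq 0$ for every $\lambda>0$ from Theorem~\ref{thm:blochgieseker}, read off the sign from the leading term $(-1)^n\binom{r}{n}\lambda^n\deg_A(\delta^n)$, and let $\lambda\to 0^+$. The gap is in the step you flag as the main obstacle, and that step is exactly where the sign $(-1)^n$ comes from.

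\textbf{The direction of the isomorphism is reversed.} Your $c_i'$ carry the factors $(-1)^j$. With these, $\zeta'^r+c_1'\zeta'^{r-1}+\dotsb+c_r'$ is what you get by substituting $\zeta=\zeta'-\lambda\delta$ into $\zeta^r+c_1\zeta^{r-1}+\dotsb+c_r$. So the isomorphism sends $\zeta'\mapsto\zeta+\lambda\delta$, and the new class corresponds to $\zeta+\lambda\delta$, not $\zeta-\lambda\delta$. Under your identification the step fails:
\begin{itemize}
\item Your justification $\zeta=(\zeta+\lambda\delta)-\lambda\delta$ would require $-\lambda\delta\in\mathcal{K}$.
\item Your ``direct verification'' again uses $\zeta=\zeta'+\lambda\delta$. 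It only shows that $\zeta'+\nu\delta$ lies in the cone for $\nu>\lambda$, and openness does not take you from there to $\zeta'$.
\item Your fallback of twisting by $-\lambda\delta$ would break the proof. The new class would then really be $\zeta-\lambda\delta$, for which no Hard Lefschetz is available. The leading term would become $+\binom{r}{n}\lambda^n\deg_A(\delta^n)$, which would ``prove'' the opposite inequality.
\end{itemize}

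\textbf{The fix is one line.} Since $\lambda\delta\in\mathcal{K}$, we have $\zeta'\mapsto\zeta+\lambda\delta\in\mathcal{K}+\mathbb{R}_{>0}\zeta$ on the nose, and no openness is needed. In the correct coordinates, $\zeta+\mu\delta=\zeta'+(\mu-\lambda)\delta$, and $\mu=\lambda$ gives $\zeta'$ itself. With this fixed, the rest of your argument matches the paper's and is correct. That includes $\deg_A(\delta^n)>0$ from Hodge--Riemann in degree $0$ and the intermediate value argument.

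\textbf{A smaller point, which applies equally to the paper's argument.} When $r>n$, full rank of each map $B^i\to B^{i+1}$ does not by itself force $c_n\neq 0$. For example, with $A=\mathbb{R}[h]/(h^2)$, $r=2$ and $c_1=0$, multiplication by $\zeta$ on $A[\zeta]/(\zeta^2)$ has full rank in every degree, yet $c_1=0$. So do not reduce Hard Lefschetz to ``full rank'' and then quote Theorem~\ref{thm:blochgieseker}; use Hard Lefschetz directly. Since $n-1\le (n+r-1)/2$, the map $\zeta'^{\,r-n+1}\colon B^{n-1}\to B^{r}$ is an isomorphism. If $c_n'=0$, this map would kill the nonzero class $\zeta'^{\,n-1}+c_1'\zeta'^{\,n-2}+\dotsb+c_{n-1}'$, because $c_j'=0$ for $j>n$ (here $n\ge 1$; the case $n=0$ is trivial). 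Hence $c_n'\neq 0$.
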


If $B$ has the K\"{a}hler package with respect to the cone $\mathcal{K} + \mathbb{R}_{>0} \zeta$, then we can show that certain quotients of $A$ have the K\"{a}hler package. Given classes $c_1, \dotsc, c_r$, define classes $s_i \in A^i$ for each $i$ via the identity
\begin{equation}\label{eq:segre}
(1 + c_1 + \dotsb + c_r) (1 + s_1 + \dotsb + s_n) = 1.
\end{equation}
For example, $s_1 = -c_1$ and $s_2 = c_1^2 - c_2$. 

\begin{proposition}\label{prop:kahlerquotient}
Suppose that $B$ has the K\"{a}hler package with respect to the cone $\mathcal{K} + \mathbb{R}_{>0} \zeta$. Suppose that $s_t \not=0$, and that $s_{j} = 0$ for $j > t$. Then $A/\operatorname{ann}(s_t)$ has the K\"{a}hler package with respect to the image of $\mathcal{K}$ and the degree map induced by $A$. 
\end{proposition}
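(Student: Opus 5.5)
The plan is to realize the Poincaré pairing on $\bar A \coloneqq A/\operatorname{ann}(s_t)$ inside $B$ through a pushforward, and then extract Hard Lefschetz and Hodge--Riemann for $\bar A$ from those for $B$ by letting the $\zeta$-component of the Kähler class dominate.

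\emph{Step 1: the pushforward.} Define an $A$-linear map $\pi_* \colon B \to A$ of degree $-(r-1)$ on the basis \eqref{eq:decomposition} by $\pi_*(\zeta^i x) = s_{i-r+1} x$, with $s_j = 0$ for $j<0$. The main computation is that $\pi_*(\zeta^{r-1+j}) = s_j$ for all $j \ge 0$. Prove it by reducing $\zeta^{r-1+j}$ with the relation $\zeta^r = -c_1\zeta^{r-1} - \dotsb - c_r$. The coefficient $a_j$ of $\zeta^{r-1}$ then satisfies $a_j + c_1 a_{j-1} + \dotsb + c_r a_{j-r} = 0$, which is exactly the recursion \eqref{eq:segre}. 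It follows that $\deg_B(y) = \deg_A(\pi_* y)$, and in particular $\deg_B(\zeta^{r-1+j} x) = \deg_A(s_j x)$ for $x \in A$.

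\emph{Step 2: Poincaré duality for $\bar A$.} Grade $\bar A$ with top degree $m \coloneqq n-t$ and degree map $\deg_{\bar A}(\bar x) = \deg_A(s_t x)$. Poincaré duality is formal: $\deg_A(s_t x y)=0$ for all $y$ forces $s_t x = 0$ by Poincaré duality for $A$, i.e.\ $\bar x = 0$.

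\emph{Step 3: the degeneration.} Fix $\ell \in \mathcal{K}$ and set $L_\lambda = \ell + \lambda \zeta$ for $\lambda > 0$, which lies in $\mathcal{K}+\mathbb{R}_{>0}\zeta$. For $x, y \in A^i$, Step 1 and the vanishing $s_j = 0$ for $j>t$ give
$$\deg_B(L_\lambda^{n+r-1-2i} xy) = \sum_{j=0}^{t} \binom{n+r-1-2i}{r-1+j} \lambda^{r-1+j} \deg_A(s_j \ell^{m-2i+t-j} xy).$$
So after dividing by the top coefficient, this form tends as $\lambda \to \infty$ to $\deg_{\bar A}(\ell^{m-2i}\bar x\bar y)$. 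The signs agree, since the relevant power on both sides is $(-1)^i$.

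\emph{Step 4: passing to the limit.} Hodge--Riemann for $B$ with respect to $L_\lambda$ says that $B^i$ splits orthogonally into Lefschetz pieces $L_\lambda^{i-k} P^k_\lambda$, on which the form has sign $(-1)^k$. I would restrict this to the subspace $A^i \subset B^i$ and follow the Lefschetz decomposition as $\lambda \to \infty$. In this limit, multiplication by $\lambda^{-1}L_\lambda \to \zeta$ acts on the $\zeta$-adic filtration of \eqref{eq:decomposition}, and the primitive subspaces should degenerate to subspaces built from $\bar A$.

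Limits only give semi-definiteness, so I would first prove Hard Lefschetz for $\bar A$ for all $\ell \in \mathcal{K}$. The natural route is to show that $\ell^{m-2i} \colon \bar A^i \to \bar A^{m-i}$ is injective by applying Hard Lefschetz for $B$ with $L_\lambda$ to $\zeta^{t}x$ and comparing $\zeta$-graded leading terms. Once Hard Lefschetz holds on all of $\mathcal{K}$, which is connected, the signatures of the forms on $\bar A^i$ are constant in $\ell$. A limiting (semi-definiteness) computation at a single point then pins down the signature, and nondegeneracy upgrades it to the definiteness required by Hodge--Riemann.

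The main obstacle is this last step. The class $\zeta$ itself lies on the boundary of the Kähler cone, so mixed Hodge--Riemann with $\zeta$ repeated $r-1+t$ times does not follow automatically. Getting strict nondegeneracy rather than just a limit inequality needs a careful analysis of the $\lambda$-expansion, for instance via a mixed Hodge--Riemann statement in the style of Cattani for the pair $(\zeta, \ell)$.
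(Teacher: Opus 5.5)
Your Steps 1--3 are fine. The $\zeta^{r-1}$-coefficient of $\zeta^{r-1+j}$ obeys the recursion \eqref{eq:segre}, so $\pi_*\zeta^{r-1+j}=s_j$. Poincar\'e duality for $\bar A = A/\operatorname{ann}(s_t)$ with $\deg_{\bar A}=\deg_A(s_t\,\cdot)$ is formal. The $\lambda^{t+r-1}$-coefficient of the $L_\lambda$-form on $A^i$ is $\binom{n+r-1-2i}{t+r-1}\deg_A(s_t\ell^{m-2i}xy)$.

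The gap is Step 4. It is the entire content of the proposition, it is only sketched, and neither mechanism you propose works as stated.
\begin{itemize}
\item \emph{Hard Lefschetz.} Injectivity of $L_\lambda^{n+r-1-2i}$ on $B^i$ for every $\lambda$ says nothing about its top coefficient $\zeta^{t+r-1}\ell^{m-2i}x$. That coefficient can vanish for $x\notin\operatorname{ann}(s_t)$ while the lower-order terms $\zeta^p\ell^{n+r-1-2i-p}x$, $p<t+r-1$, keep $L_\lambda^{n+r-1-2i}x\neq 0$. So comparing leading terms yields no contradiction.
\item \emph{Signature.} $A^i$ is not adapted to the $L_\lambda$-Lefschetz decomposition of $B^i$. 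Hodge--Riemann for $B$ gives definiteness only on $L_\lambda$-primitive vectors, and these subspaces move with $\lambda$. The restriction of the form to $A^i$ therefore has no a priori controlled signature, so even the semi-definiteness you want at a single $\ell$ does not come out of the limit.
\end{itemize}
Controlling how the Lefschetz decompositions for $\ell+\lambda\zeta$ degenerate as $\lambda\to\infty$ amounts to controlling the weight filtration of $\zeta$. That is genuinely nontrivial and is not a routine analysis of the $\lambda$-expansion.

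The missing idea is that the statement is a direct instance of the descent lemma. By your Step 1, $\pi_*(\zeta^{i}\cdot\zeta^{t+r})=s_{t+1+i}=0$ for all $i\ge 0$, so $\zeta^{t+r}=0$ by Poincar\'e duality for $A$ and $B$. Consequently:
\begin{itemize}
\item $\zeta\in\operatorname{ann}(\zeta^{t+r-1})$, so the map $A\to B/\operatorname{ann}(\zeta^{t+r-1})$ is surjective.
\item $a\in A$ lies in its kernel iff $\deg_B(\zeta^{t+r-1}ab)=\deg_A(s_tab)=0$ for all $b\in A$, i.e.\ iff $s_ta=0$.
\end{itemize}
Thus $A/\operatorname{ann}(s_t)\cong B/\operatorname{ann}(\zeta^{t+r-1})$, compatibly with the degree maps, and the image of $\mathcal{K}+\mathbb{R}_{>0}\zeta$ is the image of $\mathcal{K}$.

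Since $\zeta$ lies in the closure of $\mathcal{K}+\mathbb{R}_{>0}\zeta$ and $\zeta^{t+r-1}$ is its top nonzero power, the descent lemma of Cattani--Kaplan--Schmid and Kashiwara--Kawai (see \cite[Theorem 3.2]{Cat08}) gives the K\"ahler package for $B/\operatorname{ann}(\zeta^{t+r-1})$. This is exactly the paper's proof. It is the ``mixed Hodge--Riemann statement in the style of Cattani'' you allude to, but it must be invoked or reproved. Without it, your outline establishes neither Hard Lefschetz nor Hodge--Riemann for $A/\operatorname{ann}(s_t)$.
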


\begin{proof}
Let $\pi^* \colon A \to B$ be the natural inclusion. As both $A$ and $B$ have Poincar\'{e} duality, $\pi^*$ is dual to a map of $A$-modules $\pi_* \colon B^{\bullet} \to A^{\bullet - r + 1}$. In terms of \eqref{eq:decomposition}, if we write an element $x \in B^i$ as $a_i + a_{i-1} \zeta + \dotsb + a_{i-r + 1} \zeta^{r-1}$, then $\pi_* x = a_{i-r+1}$. We see that an element $x$ in $B$ vanishes if and only if $\pi_* \zeta^i x = 0$ for all $i \ge 0$. It follows from repeatedly applying the defining relation of $B$ and using Pieri's rule that $\pi_* \zeta^i = s_{i - r + 1}$ for $i \ge r - 1$. 

Because $s_j = 0$ for $j > t$, it follows that $\zeta^{t + r} = 0$. This implies that the map $A \to B/\operatorname{ann}(\zeta^{t + r - 1})$ is surjective, as $\zeta \in \operatorname{ann}(\zeta^{t + r - 1})$. Also, an element $a \in A$ lies in $\operatorname{ann}(s_t)$ if and only if $\pi^* a$ lies in $\operatorname{ann}(\zeta^{t + r - 1})$. This induces an isomorphism from $A/\operatorname{ann}(s_t)$ to $B/\operatorname{ann}(\zeta^{t + r - 1})$. The result follows from the descent lemma \cite[Lemma 1.16]{CKS87} \cite[Theorem 2.1.5]{KK87}, see \cite[Theorem 3.2]{Cat08} or \cite[Lemma 4.6]{LefschetzModule}, as $\zeta$ lies in the closure of $\mathcal{K} + \mathbb{R}_{>0} \zeta$.
\end{proof}

\begin{remark}\label{rem:somecone}
We note that, if $A$ has the K\"{a}hler package, then there will exist a cone $\mathcal{K}_B \subset B^1$ such that $B$ has the K\"{a}hler package with respect to $\mathcal{K}_B$ and $\deg_B$. Indeed, consider the free $\mathbb{R}[z]$-module given by 
$$B_z = \frac{A[\zeta, z]}{(\zeta^r + c_1 \zeta^{r-1}z + \dotsb  + c_r z^r)}.$$
For each $z_0 \in \mathbb{R}$, let $B_{z_0}$ be the ring obtained by setting $z = z_0$, so $B_1$ is identified with $B$. 
It is known that $B_0$ has the K\"{a}hler package with respect to the cone spanned by $\mathcal{K}$ and $\zeta$, see, e.g., \cite[Section 7.2]{AHK18}. Therefore, for any $\ell \in \mathcal{K}$, Hard Lefschetz and the Hodge--Riemann relations hold for $\ell + \zeta$ in $B_0$. This implies that Hard Lefschetz and the Hodge--Riemann relations hold for $\ell + \zeta$ in $B_{\varepsilon}$ for all sufficiently small $\varepsilon$. For any $\varepsilon > 0$, there is an isomorphism $B_{\varepsilon} \to B_1$ which sends $\zeta$ to $\zeta/\varepsilon$. The induced isomorphism $B_{\varepsilon}^{n + r - 1} \to B_1^{n + r - 1}$ scales the degree map by a positive scalar, and so $\ell + \zeta/\varepsilon$ satisfies Hard Lefschetz and the Hodge--Riemann relations in $B_1$. 
\end{remark}

However, the argument in Remark~\ref{rem:somecone} gives no control over the cone $\mathcal{K}_B$, and so it is not useful for most applications.

\subsection{Proof of Theorem~\ref{thm:kahler}}

We now proceed with the proof of Theorem~\ref{thm:kahler}. 
We first reduce to the case when $\mathrm{M}$ is loopless. Let $S \subset [N]$ be the set of loops of $\mathrm{M}$, and let $\mathrm{M}'$ be the matroid $\mathrm{M} \setminus S$. Define $\varphi' \colon \mathbb{R}^{[N] \setminus S} \to \mathbb{R}^m$ as the restriction of $\varphi^*$ to the subspace where the coordinates labeled by $S$ are $0$. Then $\varphi^*(P(\mathrm{M})) = \varphi'(P(\mathrm{M}'))$, and the classes associated to $\mathrm{M}'$ are the same as the classes associated to $\mathrm{M}$. 

For the remainder, we will assume that $\mathrm{M}$ is loopless. 
Recall that $\Sigma_N$ denotes the permutohedral fan in $\mathbb{R}^N$. 
Let $\tilde{\Sigma}$ be a simplicial projective refinement of $\Sigma \times \Sigma_N$ which has the property that the map $\iota \colon \mathbb{R}^m \times \mathbb{R}^N \to \mathbb{R}^N \times \mathbb{R}^N$ given by $\begin{pmatrix} \varphi_* & 0 \\ 0 & I \end{pmatrix}$ extends to a map of fans $\tilde{\Sigma} \to \Sigma_{N, N}$. Because $\tilde{\Sigma}$ is a refinement of $\Sigma \times \Sigma_N$, there is a subfan $\Pi$ of $\tilde{\Sigma}$ with the same support as $\mathbb{R}^m \times \Sigma_{\mathrm{M}}$.

\begin{lemma}\label{lem:Pilefschetz}
$\Pi$ is a Lefschetz fan. 
\end{lemma}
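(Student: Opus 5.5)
The approach is to apply Proposition~\ref{prop:Lefschetzfans}(1): it suffices to exhibit some Lefschetz fan with the same support as $\Pi$, and to check that $\Pi$ is quasi-projective and simplicial. The support of $\Pi$ is $\mathbb{R}^m \times |\Sigma_{\mathrm{M}}|$. The product fan $\Sigma \times \Sigma_{\mathrm{M}}$ has exactly this support. By Proposition~\ref{prop:Lefschetzfans}(4), $\Sigma$ is Lefschetz, being projective and simplicial. By Proposition~\ref{prop:Lefschetzfans}(3), $\Sigma_{\mathrm{M}}$ is Lefschetz, since $\mathrm{M}$ is loopless. Hence $\Sigma \times \Sigma_{\mathrm{M}}$ is Lefschetz by Proposition~\ref{prop:Lefschetzfans}(2).

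In order, the steps are as follows.

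First, I would confirm that $\Pi$ really has support $\mathbb{R}^m\times|\Sigma_{\mathrm{M}}|$. The fan $\tilde{\Sigma}$ refines $\Sigma\times\Sigma_N$, and $\Sigma\times\Sigma_{\mathrm{M}}$ is a subfan of $\Sigma\times\Sigma_N$ because $\Sigma_{\mathrm{M}}$ is a subfan of $\Sigma_N$. So the cones of $\tilde{\Sigma}$ contained in $|\Sigma|\times|\Sigma_{\mathrm{M}}|$ form a subfan whose support is exactly that set. Here one uses that each cone of $\Sigma\times\Sigma_{\mathrm{M}}$ is a union of cones of $\tilde{\Sigma}$.

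Second, I would check that $\Pi$ is simplicial and quasi-projective. It is simplicial because $\tilde{\Sigma}$ is. It is quasi-projective because it is a subfan of the projective fan $\tilde{\Sigma}$, and both have the same lineality space: that of $\Sigma$ times $\mathbb{R}\mathbf{e}_{[N]}$.

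Third, I would record that the lineality spaces match. The fans $\Sigma\times\Sigma_{\mathrm{M}}$ and $\Pi$ both have lineality equal to $L_\Sigma\times\mathbb{R}\mathbf{e}_{[N]}$, so the support comparison in Proposition~\ref{prop:Lefschetzfans}(1) takes place in the same quotient space.

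The only delicate point is bookkeeping with lineality spaces in applying the results of \cite{ADH}, which are stated for fans without lineality. I would handle this by passing to the quotient by the common lineality space, where everything becomes a pointed fan. The product of the two quotients is the quotient of the product, so Proposition~\ref{prop:Lefschetzfans}(2) still applies. Beyond this, I expect no real obstacle; the lemma should follow directly from Proposition~\ref{prop:Lefschetzfans}.
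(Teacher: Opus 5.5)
Your proposal is correct and follows the same route as the paper. The paper also argues that $\Pi$ is quasi-projective because it is a subfan of the projective fan $\tilde{\Sigma}$, and that its support is that of a product fan which is Lefschetz by Proposition~\ref{prop:Lefschetzfans}, so that part (1) applies; your explicit use of $\Sigma \times \Sigma_{\mathrm{M}}$ (rather than the paper's shorthand $\mathbb{R}^m \times \Sigma_{\mathrm{M}}$) is a slightly more careful version that makes the comparison fan have the same lineality space as $\Pi$.
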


\begin{proof}
$\Pi$ is constructed as a subfan of a projective fan, so it is quasiprojective. 
Its support is the same as $\mathbb{R}^m \times \Sigma_{\mathrm{M}}$, which is a Lefschetz fan by Proposition~\ref{prop:Lefschetzfans}. 
\end{proof}

The map $\Pi \to \Sigma$ induced by projection gives an injective pullback map $A(\Sigma) \to A(\Pi)$, identifying $A(\Sigma)$ with a subring of $A(\Pi)$. In particular, we have elements $c_1, \dotsc, c_r$ in $A(\Pi)$. 
Let $H$ denote the pullback of $\delta$ to $A(\Pi)$ along the map $\Pi \to \Sigma_{N,N}$ induced by $\iota$. 

\begin{proposition}\label{prop:subring}
The subring of $A(\Pi)$ generated by $H$ and $A(\Sigma)$ is isomorphic to $B$ via the map which sends $\zeta$ to $[H]$. 
\end{proposition}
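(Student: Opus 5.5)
We construct a ring map $\Phi\colon B\to A(\Pi)$, restricting to the given injection on $A(\Sigma)$ and sending $\zeta\mapsto H$. We then show $\Phi$ is injective, so that its image, which is exactly the subring generated by $H$ and $A(\Sigma)$, is isomorphic to $B$. This splits into two parts: checking that the defining relation of $B$ holds for $H$, and proving injectivity.

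\emph{The relation.} We need $H^r + c_1 H^{r-1} + \dotsb + c_r = 0$ in $A(\Pi)$. The map of fans $\tilde{\Sigma}\to\Sigma_{N,N}$ induced by $\iota$ sends $\Pi$ into $\Sigma_{N,\mathrm{M}}$, because the support of $\Pi$ is $\mathbb{R}^m\times|\Sigma_{\mathrm{M}}|$ and $\iota$ is the identity on the second factor. Hence pullback along $\Pi\to\Sigma_{N,\mathrm{M}}$ gives a ring homomorphism $A(\Sigma_{N,\mathrm{M}})\to A(\Pi)$ sending $\delta\mapsto H$. We then need the classes $\pi_1^*c_i$ on $\Sigma_{N,\mathrm{M}}$ to pull back to the classes $c_i\in A(\Sigma)\subset A(\Pi)$. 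This is a compatibility of piecewise polynomials: $c_i$ on $\Sigma$ is $\mathfrak{c}_i\circ\varphi_*$, and $\pi_1\circ\iota = \varphi_*\circ \mathrm{pr}_1$. So both classes are given by the same piecewise polynomial $\mathfrak{c}_i\circ\varphi_*\circ\mathrm{pr}_1$ on $\Pi$, and piecewise polynomials map to Chow classes compatibly with pullback. Theorem~\ref{thm:identity} then gives the relation, and $\Phi$ is well defined.

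\emph{Injectivity.} Both $B$ and $A(\Pi)$ have Poincar\'e duality: $B$ by Proposition~\ref{prop:PD}, and $A(\Pi)$ by Lemma~\ref{lem:Pilefschetz}. Their top degrees agree, since $\dim\Pi$ modulo lineality equals $n + r-1$. The key step is to show that $\Phi$ is compatible with degrees up to a nonzero scalar, i.e.\ that $\deg_\Pi(H^{r-1}\cdot[\mathrm{pt}])\neq 0$. A graded ring map between Poincar\'e duality algebras that is nonzero (an isomorphism) in top degree is injective: if $\Phi(x)=0$ with $x\neq0$, choose $y$ with $\deg_B(xy)\ne0$; then $\Phi(xy)=\Phi(x)\Phi(y)=0$, yet $\Phi(xy)$ would be a nonzero multiple of a generator of the top degree, a contradiction.

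To compute $\deg_\Pi(H^{r-1}[\mathrm{pt}])$, restrict to the fiber over a point of $\Sigma$. The class $[\mathrm{pt}]$ of $\Sigma$ pulls back to a Minkowski weight supported on cones of $\Pi$ lying over a maximal cone of $\Sigma$, and capping with it reduces the computation to the star fan, which has the support of $\Sigma_{\mathrm{M}}$ up to lineality. There $H$ restricts, along $\Pi\to\Sigma_{N,N}$ composed with the addition map, to the pullback of $\alpha$ under a translate. Fixing the first coordinate in a cone, the function $\delta_j$ restricted to $\{z\}\times|\Sigma_{\mathrm{M}}|$ agrees with $\alpha$ on $\Sigma_{\mathrm{M}}$ modulo linear functions, using the fan structure near infinity. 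The degree then becomes $\deg_{\mathrm{M}}(\alpha^{r-1}) = 1$. This last computation is the main obstacle: it requires carefully identifying the restriction of $H$ to the fiber, and making the Minkowski-weight pullback argument rigorous on the refinement $\tilde{\Sigma}$.

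A fallback is to avoid degrees and use the $A(\Sigma)$-module structure. One checks that $1,H,\dots,H^{r-1}$ are $A(\Sigma)$-linearly independent by pairing $\sum a_iH^i$ against $H^{j}\cdot b$ and using the triangularity argument of Proposition~\ref{prop:PD}. This again reduces to the nonvanishing $\deg(H^{r-1}\cdot [\mathrm{pt}])\ne0$ together with $\deg(H^{k}\cdot A^{n+r-1-k}(\Sigma))=0$ for $k<r-1$, which holds for dimension reasons via the projection formula.
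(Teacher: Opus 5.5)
Your architecture matches the paper's. The relation comes from Theorem~\ref{thm:identity}; your piecewise-polynomial check that $\pi_1^*c_i$ pulls back to $c_i \in A(\Sigma)$ is a detail the paper leaves implicit. Injectivity then reduces, via Poincar\'e duality of $B$, to the nonvanishing of $H^{r-1}$ times the pulled-back point class of $\Sigma$ (the paper uses $\tilde{D}^n$ for an ample $D$).

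\textbf{The gap.} It is in that last step, which you flag as the main obstacle, and your sketch of it points the wrong way.
\begin{itemize}
\item \emph{Wrong support.} The Minkowski weight of $\mathrm{pr}_1^*[\operatorname{pt}]$ on $\tilde{\Sigma}$ is not supported on cones over a maximal cone of $\Sigma$. By the projection formula, $\deg(\mathrm{pr}_1^*[\operatorname{pt}] \cdot [V(\tau)]) = 0$ unless the $n$-dimensional orbit closure $V(\tau)$ surjects onto the toric variety of $\Sigma$. That happens only when $\mathrm{pr}_1(\tau)$ lies in the lineality space $L$ of $\Sigma$.
\item \emph{No single star fan.} Over a maximal cone $\sigma$, the fan $\tilde{\Sigma}$ in general subdivides $\sigma \times \mathbb{R}\mathbf{e}_{[N]}$. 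This is why $\tilde{\Sigma}$ is needed at all: $(z,w) \mapsto \alpha_j(\varphi_*(z) + w)$ is not linear on the cones of $\Sigma \times \Sigma_N$. So there is no single star fan to restrict to.
\item \emph{Not a fan function.} On the fiber over a generic $z$, the function $w \mapsto \alpha_j(\varphi_*(z) + w)$ is not a piecewise linear function on a fan.
\end{itemize}
Turning your ``translate'' and ``near infinity'' heuristic into a proof would require intersection theory on polyhedral complexes, which the proposal does not supply.

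\textbf{The fix.} Intersect with the fiber over the origin instead.
\begin{itemize}
\item Since $\tilde{\Sigma}$ refines $\Sigma \times \Sigma_N$, the cones of $\Pi$ contained in $L \times |\Sigma_{\mathrm{M}}|$ form a subfan refining $\Sigma_{\mathrm{M}}$; this is the paper's $\{0\} \times \Sigma_{\mathrm{M}}$.
\item $\tilde{D}^n \cdot [\Pi]$ is a positive multiple of the weight of this subfan.
\item On this subfan, $\iota(0,w) = (0,w)$ and $\delta_j(0,w) = \alpha_j(w)$. So $H$ restricts exactly to the pullback of $\alpha$, with no translation.
\item Hence $\deg(\tilde{D}^n H^{r-1})$ is a positive multiple of $\deg_{\mathrm{M}}(\alpha^{r-1}) \neq 0$, which is how the paper concludes.
\end{itemize}
With this replacement your argument becomes the paper's proof. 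Your fallback via $A(\Sigma)$-linear independence of $1, H, \dotsc, H^{r-1}$ reduces to the same nonvanishing and does not get around it.
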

 
\begin{proof}
We first check that $H^r + c_1 H^{r-1} + \dotsb + c_r = 0$. The image of the map $\Pi \to \Sigma_{N, N}$ is $\Sigma_{N, \mathrm{M}}$. 
Note that $H$ is the pullback of $\delta$ from $\Sigma_{N, \mathrm{M}}$, and that $c_i$ is the pullback of the class $c_i \in A^{i}(\Sigma_{N, \mathrm{M}})$. The result follows from Theorem~\ref{thm:identity}.

This implies that $B$ has a surjective map to the subring of $A(\Pi)$ generated by $H$ and $A(\Sigma)$. Proposition~\ref{prop:PD} and the fact that $A(\Sigma)$ has Poincar\'{e} duality imply that $B$ has Poincar\'{e} duality. 
It follows that every ideal of $B$ intersects $B^{n+r-1}$. So to check that the map from $B$ to $A(\Pi)$ is injective, it suffices to check that the image of $B^{n + r - 1}$ is nonzero. 

Choose an ample divisor $D$ in $A^1(\Sigma)$, and let $\tilde{D}$ be its pullback to $A^1(\Pi)$. We check that $\tilde{D}^n \cdot H^{r - 1}$ is nonzero in $A^{n + r- 1}(\Pi)$. Pushing forward to $A(\tilde{\Sigma})$, this is equivalent to checking that $\tilde{D}^{n} \cdot H^{r-1} \cdot [\Pi]$ is nonzero, where $[\Pi]$ is the Minkowski weight corresponding to the balanced fan $\Pi$. Because $D^n$ is a positive multiple of the class of a point, $\tilde{D}^n \cdot [\Pi]$ is a positive multiple of $[\{0\} \times \Sigma_{\mathrm{M}}]$, where $\{0\} \times \Sigma_{\mathrm{M}}$ is the subfan of $\tilde{\Sigma}$ with support $\{0\} \times \Sigma_{\mathrm{M}}$. The restriction of $H$ to the Chow ring of this fan is identified with the pullback of $\alpha$ from $A^1(\Sigma_{\mathrm{M}})$, so the claim follows from the fact that $\alpha^{r-1}$ is nonzero in $A(\Sigma_{\mathrm{M}})$ \cite[Proposition 5.8(2)]{AHK18}. 
\end{proof}

We will need the following result to prove Theorem~\ref{thm:kahler}.

\begin{proposition}\label{prop:AHLcriterion}\cite[Proposition 2.10]{LefschetzModule}
Let $A$ be a finite-dimensional commutative graded $\mathbb{R}$-algebra equipped with an open convex cone $\mathcal{K} \subset A^1$ and a map $\deg_A \colon A^n \to \mathbb{R}$ which satisfies the K\"{a}hler package. Let $B$ be a subalgebra of $A$ which is generated by a subset of $\overline{\mathcal{K}}$ that has $B^n = A^n$, and which satisfies Poincar\'{e} duality with respect to $\deg_A$. Then $B$ has the K\"{a}hler package with respect to the cone $(\overline{\mathcal{K}} \cap B^1)^{\circ}$. 
\end{proposition}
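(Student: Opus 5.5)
The strategy is to reduce Hard Lefschetz and Hodge--Riemann for a class $\ell \in (\overline{\mathcal{K}} \cap B^1)^{\circ}$ to statements about quotients of smaller top degree, and then argue by induction on $n$. Poincar\'{e} duality for $B$ is a hypothesis, so only (HL) and (HR) need proof.

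\textbf{Generators and decomposition of $\ell$.} Since $B$ is generated by a subset of $\overline{\mathcal{K}} \subset A^1$, $B$ is generated in degree one. Hence we may pick generators $g_1, \dotsc, g_k \in \overline{\mathcal{K}}$ spanning $B^1$. Because $\ell$ lies in the interior of the cone $\overline{\mathcal{K}} \cap B^1$ inside $B^1$, we can write $\ell = \sum_j a_j g_j$ with all $a_j > 0$, after enlarging the generating set by elements of $\overline{\mathcal{K}} \cap B^1$ if necessary.

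\textbf{Induction via the AHK lemma.} We use the standard criterion from \cite[Proposition 7.15]{AHK18} in its abstract form. Let $R$ be a Poincar\'{e} duality algebra of top degree $n$, and let $\ell = \sum a_j g_j$ with $a_j > 0$. Suppose that for each $j$ the Poincar\'{e} duality quotient $R/\operatorname{ann}(g_j)$ (top degree $n-1$, degree map $y \mapsto \deg(g_j y)$) satisfies HR with respect to $\ell$. Then $\ell$ satisfies HL on $R$ in every degree $i < n/2$. Indeed, a kernel element $x$ of $\ell^{n-2i}$ pairs to a definite-sign sum $\sum_j a_j \deg(g_j \ell^{n-2i-1} x^2)$ whose terms vanish only if each $g_j x = 0$, forcing $x = 0$ by PD and generation in degree one. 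HR on $R$ then follows from HL along the segment joining $\ell$ to a point where HR is already known: signatures are constant on connected sets where HL holds, and the degree-zero/low cases are checked directly. Applied with $R = B$, this reduces the problem to HR for $\ell$ on $B/\operatorname{ann}_B(g_j)$.

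\textbf{The quotients.} Set $A_j = A/\operatorname{ann}_A(g_j)$ with $\deg_{A_j}(y) = \deg_A(g_j y)$. By the descent lemma (as in the proof of Proposition~\ref{prop:kahlerquotient}, using $g_j \in \overline{\mathcal{K}}$), $A_j$ has the K\"{a}hler package with respect to the image of $\mathcal{K}$. The image of $B$ in $A_j$ is $B/(B \cap \operatorname{ann}_A(g_j))$. Since $B^n = A^n$ and $B$ has PD, this equals $B/\operatorname{ann}_B(g_j)$: for $b \in B$, $g_j b = 0$ in $A$ iff it vanishes in $B$. Thus it is a subalgebra of $A_j$ generated by images of the $g_i$, which lie in $\overline{\mathcal{K}_{A_j}}$. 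It satisfies PD by the AHK-type argument, and its top degree agrees with $A_j^{n-1}$ because $B^n = A^n$ gives a nonzero top class. The image of $\ell$ lies in the interior of the corresponding cone, since it is still a positive combination of spanning generators. Induction on $n$ then gives HR for $\ell$ on $B/\operatorname{ann}_B(g_j)$, with base case $n \le 1$ trivial.

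\textbf{Main obstacle.} The delicate point is the bookkeeping in the last step: checking that $B/\operatorname{ann}_B(g_j) \hookrightarrow A_j$ genuinely satisfies all hypotheses again. In particular one must verify that $B^{n-1}$ surjects onto $A_j^{n-1} \cong \mathbb{R}$ and that the image of $\ell$ remains interior. The former follows since $g_j \ell^{n-1} \ne 0$ when $\ell \in \mathcal{K}$ is approached. One must also supply one class in the cone at which HR is known, for the signature continuity argument. For that class I would take a limit of $\ell + t\kappa$ with $\kappa \in \mathcal{K}$, $t \to 0$: semidefiniteness passes to the limit, and definiteness then follows from HL, which is already established for $\ell$.
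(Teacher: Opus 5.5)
The paper gives no proof of this statement; it quotes \cite[Proposition 2.10]{LefschetzModule}. Your reduction of Hard Lefschetz to Hodge--Riemann on quotients is sound. Write $\ell=\sum_j a_j g_j$ with $a_j>0$ and $g_j\in\overline{\mathcal K}\cap B^1$ spanning $B^1$. The descent lemma gives the K\"ahler package on $A/\operatorname{ann}_A(g_j)$, and $B/\operatorname{ann}_B(g_j)$ satisfies the hypotheses one degree lower. The computation of \cite[Proposition 7.15]{AHK18} then yields HL for $\ell$ on $B$.

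There is a genuine gap, though: the step that should produce a point of $(\overline{\mathcal K}\cap B^1)^\circ$ where HR holds on $B$ does not work. If $\mathcal K\cap B^1\neq\emptyset$ the whole statement is trivial. For $\ell\in\mathcal K\cap B^1$, injectivity of $\ell^{n-2i}$ and positivity on primitive classes restrict directly from $A$ to the Poincar\'e duality subalgebra $B$. So all the content is in the case $\mathcal K\cap B^1=\emptyset$, which is the case the paper needs, since there $B$ is generated by classes only known to lie in $\overline{\mathcal K}_\Pi$. In that case $\ell+t\kappa\notin B^1$ for every $t>0$. HR for $\ell+t\kappa$ is then positivity on the subspace $\ker((\ell+t\kappa)^{n-2i+1})\cap A^i$ of $A^i$. ``Semidefiniteness passes to the limit'' only gives semidefiniteness on the limit of these subspaces, and that limit need not contain $\ker(\ell^{n-2i+1})\cap B^i$.

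For instance, take $n=2m$ and $i=m$:
\begin{itemize}
\item The relevant kernel is $((\ell+t\kappa)A^{m-1})^{\perp}$.
\item For every $z\in A^{m-1}$ with $\ell z=0$, the space $(\ell+t\kappa)A^{m-1}$ contains $\kappa z=(\ell+t\kappa)(z/t)$.
\item Hence a class $x\in B^m$ with $\ell x=0$ lies in the limiting primitive space only if $\deg_A(\kappa xz)=0$ for all such $z$.
\end{itemize}
Nothing in your argument forces this.

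This is not a technicality of the limit. In degrees $i<n/2$ you can avoid needing a base point. Apply your induction to $B/\operatorname{ann}_B(\ell)\subset A/\operatorname{ann}_A(\ell)$, which is allowed since $\ell\in\overline{\mathcal K}$. An $\ell$-primitive $x\in B^i$ maps to a primitive class there whose Hodge--Riemann value is $(-1)^i\deg_A(\ell^{n-2i}x^2)$. Injectivity of $\ell$ on $B^i$ then finishes.

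The middle degree $i=n/2$ is different. Given HL and HR in lower degrees, HR there is equivalent to a statement about the $\ell$-independent form $(-1)^{n/2}\deg_A(xy)$ on $B^{n/2}$. Its signature on the subspace $B^{n/2}\subset A^{n/2}$ must be the one predicted by $\dim B^0,\dots,\dim B^{n/2}$, so continuity or deformation in $\ell$ cannot produce it. The known signature on $A^{n/2}$ does not determine the signature on $B^{n/2}$. Indeed $A=B\oplus B^{\perp}$ orthogonally, and $B^{\perp}$ can absorb the discrepancy.

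For $n=2$ the counting does work, because $A^1$ has a single positive direction for $\deg_A$ and $\ell$ already supplies it. From $n=4$ on, that counting argument breaks down, and you need a genuinely new input at this point. Because your HL step feeds on HR for the quotients, the missing middle-degree statement propagates through every even level of your induction.
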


\begin{proof}[Proof of Theorem~\ref{thm:kahler}]
By Lemma~\ref{lem:Pilefschetz}, $A(\Pi)$ has the K\"{a}hler package with respect to a cone $\mathcal{K}_{\Pi}$ whose closure contains the classes of any convex piecewise linear function on $\mathbb{R}^m \times \mathbb{R}^N$ that is linear on the cones of $\Pi$. In particular, the pullback of any class in $\mathcal{K} \subset A^1(\Sigma)$ to $A^1(\Pi)$ is in the closure of this cone, and $[H]$ is in the closure of this cone. Because $\Sigma$ is a projective fan, $A^1(\Sigma)$ is spanned by classes of convex piecewise linear functions, which pull back to classes in $\overline{\mathcal{K}}_{\Pi}$. The Chow ring of a simplicial toric variety is generated in degree $1$, so $A(\Sigma)$, viewed as a subring of $A(\Pi)$, is generated by elements in $\overline{\mathcal{K}}_{\Pi}$. 
By Proposition~\ref{prop:subring}, $B$ is a subring of $A(\Pi)$ generated by classes in $\overline{\mathcal{K}}_{\Pi}$. 
As $B$ is generated by classes in $\overline{\mathcal{K}}_{\Pi}$ and
has Poincar\'{e} duality, Proposition~\ref{prop:AHLcriterion} implies that $B$ has the K\"{a}hler package with respect to the interior of $\overline{\mathcal{K}}_{\Pi} \cap B^1$, proving the result. 
\end{proof}

\medskip

We now mention a result which strengthens Theorem~\ref{thm:kahler} in two different ways. We allow one to take the projectivization of multiple projective bundles, and we also allow the base to be replaced by an arbitrary Lefschetz fan. Let $\Sigma$ be a Lefschetz fan in $\mathbb{R}^m$, and let $\mathrm{M}_1, \dotsc, \mathrm{M}_k$ be matroids on $[N_1], \dotsc, [N_k]$ of ranks $r_1, \dotsc, r_k$, respectively. For each $i$, let $\varphi_i^* \colon \mathbb{Z}^{N_i} \to \mathbb{Z}^m$ be a linear map, and assume that the transpose $\varphi_{i*}$ induces a map of fans from $\Sigma$ to the normal fan of $P(\mathrm{M}_i)$. 
For each $i$ and $j$, let $c_j(\mathrm{M}_i)$ denote the associated class in $A^j(\Sigma)$. 
Consider the ring 
$$B = \frac{A(\Sigma)[\zeta_1, \dotsc, \zeta_k]}{(\zeta^{r_1}_1 + c_1(\mathrm{M}_1) \zeta^{r_1 - 1}_1 + \dotsb + c_{r_1}(\mathrm{M}_1), \dotsc, \zeta^{r_k}_k + c_1(\mathrm{M}_k) \zeta^{r_k - 1}_k + \dotsb + c_{r_k}(\mathrm{M}_k))}.$$
Then $B$ is equipped with an isomorphism $\deg_B$ induced by its structure as an iterated projective bundle ring (see Section~\ref{ssec:pbundle}). Set $\mathcal{K}_B$ to be the open convex cone in $B^1$ generated by the interior of the pullback of $\mathcal{K}(\Sigma)$ and the classes $\zeta_1, \dotsc, \zeta_k$. 

\begin{theorem}\label{thm:multiplebundle}
The ring $B$ has the K\"{a}hler package with respect to $\mathcal{K}_B$ and $\deg_B$. 
\end{theorem}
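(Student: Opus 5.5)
We will run the argument used for Theorem~\ref{thm:kahler}, carried out once for each matroid. As before, we may assume each $\mathrm{M}_i$ is loopless: deleting loops changes neither the classes $c_j(\mathrm{M}_i)$ nor the polytopes $\varphi_i^*(P(\mathrm{M}_i))$.

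Let $\tilde{\Sigma}$ be a simplicial quasi-projective refinement of $\Sigma \times \Sigma_{N_1} \times \dotsb \times \Sigma_{N_k}$ with the same support, chosen so that for each $i$ the linear map
$$\iota_i \colon \mathbb{R}^m \times \prod_j \mathbb{R}^{N_j} \to \mathbb{R}^{N_i}\times\mathbb{R}^{N_i}, \qquad (z,w_1,\dotsc,w_k)\mapsto(\varphi_{i*}z, w_i)$$
induces a map of fans $\tilde{\Sigma} \to \Sigma_{N_i,N_i}$. Such a refinement exists by taking common refinements of pulled-back fans and then simplicially subdividing. Let $\Pi$ be the subfan with support $|\Sigma| \times \prod_i |\Sigma_{\mathrm{M}_i}|$.

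By Proposition~\ref{prop:Lefschetzfans}(2),(3), the fan $\Sigma \times \prod_i \Sigma_{\mathrm{M}_i}$ is Lefschetz. Part (1) then shows that $\Pi$ is Lefschetz, since it is quasi-projective with the same support. Quasi-projectivity needs care when $\Sigma$ is not complete: $\Pi$ should be a subfan of a refinement of a projective fan containing $\Sigma$. I would build $\tilde{\Sigma}$ inside $\hat\Sigma \times \prod \Sigma_{N_i}$, where $\hat\Sigma$ is a projective simplicial fan containing a refinement of $\Sigma$. By Proposition~\ref{prop:Lefschetzfans}(1), we may replace $\Sigma$ by such a refinement at the start without changing $A(\Sigma)$'s Kähler package. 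The pullback $A(\Sigma) \hookrightarrow A(\Pi)$ is injective; this follows via Minkowski weights as in Proposition~\ref{prop:subdivision}.

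Let $H_i$ be the pullback of $\delta$ along $\Pi \to \Sigma_{N_i,\mathrm{M}_i}$. Note that $\iota_i$ maps $|\Pi|$ into $\mathbb{R}^{N_i}\times|\Sigma_{\mathrm{M}_i}|$. Since $c_j(\mathrm{M}_i)$ is pulled back from $\Sigma_{N_i,\mathrm{M}_i}$, Theorem~\ref{thm:identity} gives $H_i^{r_i} + c_1(\mathrm{M}_i)H_i^{r_i-1}+\dotsb+c_{r_i}(\mathrm{M}_i) = 0$. Hence $\zeta_i \mapsto H_i$ defines a map $B \to A(\Pi)$. The ring $B$ is an iterated projective bundle ring over $A(\Sigma)$, which has Poincaré duality. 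Applying Proposition~\ref{prop:PD} $k$ times shows that $B$ has Poincaré duality, so every nonzero ideal meets the top degree. Injectivity therefore reduces to showing $\tilde D^{d} \prod_i H_i^{r_i-1} \neq 0$ in $A(\Pi)$, where $D$ is in $\mathcal{K}(\Sigma)$ and $d=\dim\Sigma$ modulo lineality.

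This nonvanishing is the step I expect to need the most care. The plan is to compute it via Minkowski weights. By the positivity of $\deg_\Sigma$, $\tilde D^d\cdot[\Pi]$ is a positive multiple of the weight of the fan with support $\{0\}\times\prod|\Sigma_{\mathrm{M}_i}|$. On that fan, $H_i$ restricts to the pullback of $\alpha$ from the $i$th factor $\Sigma_{\mathrm{M}_i}$. The degree then factors as a product of $\deg_{\mathrm{M}_i}(\alpha^{r_i-1})$, each of which is nonzero by \cite[Proposition 5.8(2)]{AHK18}. Verifying this factorization means checking that Minkowski-weight products on a product fan split. Since refinement does not change degrees, I expect this to be routine.

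Finally, $A(\Pi)$ has the Kähler package with respect to $\mathcal{K}(\Pi)$. Pullbacks of convex piecewise linear functions on $\Sigma$ lie in $\overline{\mathcal{K}(\Pi)}$, and they span $A^1(\Sigma)$ when $\Sigma$ is quasi-projective, since $\mathcal{K}(\Sigma)$ is open. Each $H_i$ is the pullback of the convex function $\delta_j$, so it also lies in $\overline{\mathcal{K}(\Pi)}$. Chow rings of simplicial fans are generated in degree one, so $B$ is generated by elements of $\overline{\mathcal{K}(\Pi)}$. Moreover $B$ has Poincaré duality and top degree equal to $A^{\mathrm{top}}(\Pi)$, since both are one-dimensional and the image is nonzero. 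Proposition~\ref{prop:AHLcriterion} then gives the Kähler package for $B$ on $(\overline{\mathcal{K}(\Pi)}\cap B^1)^\circ$. This cone contains the interior of the cone spanned by the pullback of $\mathcal{K}(\Sigma)$ and $\zeta_1,\dotsc,\zeta_k$, which is $\mathcal{K}_B$. The degree maps agree up to a positive scalar by the computation above, which finishes the proof.
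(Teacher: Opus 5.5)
Your proposal is correct and is essentially the paper's proof: build a quasi-projective simplicial $\Pi$ with support $|\Sigma|\times\prod_i|\Sigma_{\mathrm{M}_i}|$ that maps to $\Sigma$ and to each $\Sigma_{N_i,\mathrm{M}_i}$, get the relations from Theorem~\ref{thm:identity}, get injectivity of $B\to A(\Pi)$ from Poincar\'e duality plus top-degree nonvanishing as in Proposition~\ref{prop:subring}, and finish with the Lefschetz property of $\Pi$ and Proposition~\ref{prop:AHLcriterion}. Your extra care about quasi-projectivity when $\Sigma$ is not complete, and about the positive normalization of the degree maps, fills in points the paper leaves implicit; note that any refinement of $\Sigma$ enters only through the construction of $\Pi$, so you never need to ``replace'' $\Sigma$ in the definition of $B$.
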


\begin{proof}
We can reduce to the case when all of the $\mathrm{M}_i$ are loopless. 
As in the proof of Theorem~\ref{thm:kahler}, construct a quasi-projective fan $\Pi$ in $\mathbb{R}^m \times \mathbb{R}^{N_1} \times \dotsb \times \mathbb{R}^{N_k}$ whose support is the product of the support of $\Sigma$ with the support of the Bergman fans of the $\mathrm{M}_i$. One can choose a simplicial fan structure on $\Pi$ so that:
\begin{enumerate}
\item the projection of $\Pi$ to $\mathbb{R}^m$ is a map of fans with image $\Sigma$, and
\item for each $i$, the map $\mathbb{R}^m \times \mathbb{R}^{N_1} \times \dotsb \times \mathbb{R}^{N_k} \to \mathbb{R}^{N_i} \times \mathbb{R}^{N_i}$ induced by $\varphi_{i*}$ gives a map of fans from $\Pi$ to the projective bundle fan $\Sigma_{N_i, \mathrm{M}_i}$. 
\end{enumerate} 
As in the proof of Proposition~\ref{prop:subring}, the subring of $A(\Pi)$ generated by $A(\Sigma)$ and the pullbacks of the various $\delta$ classes from each $\Sigma_{N_i, \mathrm{M}_i}$ is isomorphic to the ring $B$; that the relations in the definition of $B$ hold follows from Theorem~\ref{thm:identity}. Note that $\Pi$ is a Lefschetz fan, and so $A(\Pi)$ has the K\"{a}hler package. 
As $B$ has Poincar\'{e} duality by Proposition~\ref{prop:PD}, the result follows from Proposition~\ref{prop:AHLcriterion}. 
\end{proof}

For example, taking $\Sigma$ to be the Bergman fan of a matroid, we have the following corollary of Theorem~\ref{thm:multiplebundle}. 

\begin{example}\label{thm:matroidbundle}
Let $\mathrm{N}$ be a loopless matroid on $[n]$, and let $\mathrm{M}_1, \dotsc, \mathrm{M}_k$ be matroids on $[n]$ of ranks $r_1, \dotsc, r_k$, respectively. For each $i$ and $j$, let $c_j(\mathcal{S}_{\mathrm{M}_i})$ denote the restriction of the class defined in \cite{BEST} to $A^j(\mathrm{N})$. 
Then the ring 
$$\frac{A(\mathrm{N})[\zeta_1, \dotsc, \zeta_k]}{(\zeta^{r_1}_1 + c_1(\mathcal{S}_{\mathrm{M}_1}) \zeta^{r_1 - 1}_1 + \dotsb + c_{r_1}(\mathcal{S}_{\mathrm{M}_1}), \dotsc, \zeta^{r_k}_k + c_1(\mathcal{S}_{\mathrm{M}_k}) \zeta^{r_k - 1}_k + \dotsb + c_{r_k}(\mathcal{S}_{\mathrm{M}_k}))}$$
has the K\"{a}hler package with respect to the interior of the cone generated by restrictions of ample divisors from the permutohedral fan and $\zeta_1, \dotsc, \zeta_k$. 
\end{example}

\medskip

In a key case, the ring considered in Theorem~\ref{thm:kahler} is the Chow ring of a smooth projective toric variety. However, the cone $\mathcal{K}$ appearing in Theorem~\ref{thm:kahler} is strictly larger  than the nef cone of this toric variety, and so this observation cannot be used to prove Theorem~\ref{thm:kahler} in this case. 

\begin{example}\label{ex:nonnef}
Let $\Sigma$ be the permutohedral fan, and let $\mathrm{M}$ be a (possibly non-realizable) matroid. Let $c_i \in A^i(\Sigma)$ be the class denoted $c_i(\mathcal{S}_{\mathrm{M}})$ in \cite{BEST}. In \cite[Appendix III]{BEST}, it is shown that there are line bundles $\mathcal{L}_1, \dotsc, \mathcal{L}_r$ on the permutohedral toric variety such that $c_i$ is the $i$th Chern class of $\mathcal{L}_1 \oplus \dotsb \oplus \mathcal{L}_r$. Then the projectivization of the direct sum of line bundles $\mathcal{L}_1 \oplus \dotsb \oplus \mathcal{L}_r$ over the permutohedral toric variety has the structure of a toric variety, and its Chow ring is $A(\Sigma)[\zeta]/(\zeta^r + c_1 \zeta^{r-1} + \dotsb + c_r)$. However, the vector bundle $\mathcal{L}_1 \oplus \dotsb \oplus \mathcal{L}_r$ is essentially never nef, so the divisor class corresponding to $\zeta$ is essentially never nef on this toric variety. 
\end{example}

The proof of Theorem~\ref{thm:kahler} relies crucially on the main result of \cite{AHK18}, which proves the K\"{a}hler package for the Chow ring of matroids. However, we note that the special case of Theorem~\ref{thm:kahler} mentioned in Example~\ref{ex:nonnef} easily implies the main result of \cite{AHK18}. 

\begin{example}\label{ex:AHK}
Let $\Sigma$ be the permutohedral fan, and let $\mathrm{M}$ be a loopless matroid. Let $c_i \in A^i(\Sigma)$ be the class denoted $c_i(\mathcal{S}_{\mathrm{M}})$ in \cite{BEST}, and let $B = A(\Sigma)[\zeta]/(\zeta^r + c_1 \zeta^{r-1} + \dotsb + c_r)$. The classes $s_1, s_2, \dotsc$ defined using \eqref{eq:segre} are equal to the classes denoted $c_1(\mathcal{Q}_{\mathrm{M}}), c_2(\mathcal{Q}_{\mathrm{M}}), \dotsc, c_{n-r}(\mathcal{Q}_{\mathrm{M}})$ in \cite{BEST}. In particular, $s_j = 0$ for $j > n-r$. 
It follows from Theorem~\ref{thm:kahler} and Proposition~\ref{prop:kahlerquotient} that $A(\Sigma)/\operatorname{ann}(c_{n-r}(\mathcal{Q}_{\mathrm{M}}))$ has the K\"{a}hler package with respect to the image of the cone generated by ample divisors on the permutohedral toric variety. 
There is an identification of $A(\Sigma)/\operatorname{ann}(c_{n-r}(\mathcal{Q}_{\mathrm{M}}))$ with the Chow ring of a matroid; this follows from Poincar\'{e} duality for Chow rings of matroids, see \cite[Theorem 4.2.1]{BES}. This then implies \cite[Theorem 1.4]{AHK18}. 
\end{example}

\bibstyle{alpha}
\bibliography{matroid.bib}

\end{document}